\newcommand{\A}{\mathcal A}
\newcommand{\B}{\mathcal B}
\newcommand{\Z}{\mathcal Z}
\newcommand{\Ph}{\mathcal P}
\newcommand{\R}{\mathcal R}
\newcommand{\T}{\mathcal T}
\newcommand{\Nscr}{\mathcal{N}}
\newcommand{\Pscr}{\mathcal{P}}
\newcommand{\Rscr}{\mathcal{R}}
\newcommand{\Xscr}{\mathcal{X}}
\newcommand{\Dscr}{\mathcal{D}}
\newcommand{\tond}[1]{{\left(#1\right)}}
\newcommand{\quadr}[1]{{\left[#1\right]}}
\newcommand{\qed}{{\vskip-18pt\null\hfill$\square$\vskip6pt}}
\newcommand{\Ham}[2]{H^{(#1)}_{#2}}
\newcommand{\Poi}[2]{\{#1,#2\}}
\newcommand{\derp}[2]{{\frac{\partial #1}{\partial #2}}}
\newcommand{\ncamp}[1]{\Big\|#1\Big\|^\oplus}
\newcommand{\norm}[1]{\left\|#1\right\|}
\def\range{\Rscr^{(s)}}
\def\pspazio{\Pscr^{(s)}}
\def\CC{\mathbb{C}}
\def\RR{\mathbb{R}}
\def\ZZ{\mathbb{Z}}
\def\Id{\mathbb I}
\def\Im{i}
\def\lie#1{L_{#1}}
\def\chiph{\chi^{\phantom s}}
\def\gt{>}
\def\lt{<}
\def\le{\leq}
\def\ge{\geq}
\def\Chi{\Xscr}
\newtheorem{theorem}{Theorem}[section]
\newtheorem{theorem*}{Theorem}
\newtheorem{lemma}{Lemma}[section]
\newtheorem{corollary}{Corollary}[section]
\newtheorem{proposition}{Proposition}[section]
\newtheorem{definition}{Definition}[section]
\newtheorem{remark}{Remark}[section]
\newenvironment{proof}[0]{\noindent{\bf proof:}}{$\square$\par\medskip}
\title{An extensive resonant normal form\\
  for an arbitrary large Klein-Gordon model}
\author{Simone Paleari and Tiziano Penati}
\begin{document}



\maketitle

\begin{abstract}
  We consider a finite but arbitrarily large Klein-Gordon chain, with
  periodic boundary conditions. In the limit of small couplings in the
  nearest neighbor interaction, and small (total or specific) energy,
  a high order resonant normal form is constructed with estimates
  uniform in the number of degrees of freedom. In particular, the
  first order normal form is a generalized discrete nonlinear
  Schr\"odinger model, characterized by all-to-all sites coupling with
  exponentially decaying strength.

\noindent{\bf Keywords:} Extensivity, resonant normal form,
Klein-Gordon model, anticontinuum limit, Thermodynamic limit,
generalized dNLS model.
\end{abstract}


\section{Introduction and statement of the results}
\label{s:1}

In the present paper, along the lines of~\cite{Car07, CarM12, GioPP12,
  GioPP13, MaiBC13}, we keep on investigating the development and
application of a perturbation theory for Hamiltonian systems with an
arbitrarily large number of degrees of freedom, and in particular in
the thermodynamic limit. Indeed, motivated also by the problems
arising in the foundations of Statistical Mechanics, we want to
consider large systems (e.g. for a model of a crystal the number of
particles should be of the order of the Avogadro number) with non
vanishing energy per particle (which corresponds to a non zero
temperature in the physical model).

Since we are interested in the low temperature regime (aiming for
example at some rigorous results of the classical mechanics
description of the behavior of the specific heats in such a regime),
it is foreseeable the use of perturbation theory to exploit the
presence of a small parameter like the specific energy. Unfortunately,
it is a well known limit of the classical results of this theory (like
KAM or Nekhoroshev theorem) to suffer a bad dependence on the number
of degrees of freedom, often resulting in void or non applicable
statements in the thermodynamic limit.

In the recent papers~\cite{CarM12,GioPP13,MaiBC13} it has been
possible to prove, for the first time, the existence of an approximate
conserved quantity, independent of the Hamiltonian, exactly in the
thermodynamic limit, thus with uniform estimates in the number of
degrees of freedom and non vanishing specific energy.

In the present work we make progress in the above mentioned research
program, with a result in the direction of a normal form construction,
rather than in that of approximate conserved quantities. Such a
construction is shown to hold both in a regime of small total energy
and in a regime of small specific energy; moreover we are able to
completely control the dependence on the number of degrees of freedom,
thanks to the ideas and techniques used in~\cite{GioPP13}, that we
here extend also to cover the normal form algorithm.

We consider a Klein-Gordon model as described by the following Hamiltonian
\begin{equation}
  \label{e.H}
  H(x,y) = \frac12\sum_{j=1}^N \left[ y^2_j + x^2_j + a(x_{j+1}-x_j)^2 \right]
  + \frac14\sum_{j=1}^Nx_j^4 \ ,
\qquad
  x_0=x_N , \ y_0=y_N \ ,
\end{equation}
i.e. a finite chain of $N$ degrees of freedom and periodic boundary
conditions.

Our main result holds in the limit of small coupling (the constant $a$
in the Hamiltonian), and small (both total and specific) energy. A
simplified statement of our normal form construction could be the
following (see Proposition~\ref{p.1} and Theorem~\ref{prop.gen} for
the complete ones)

\vskip10pt
\noindent{\bf Theorem\ } {\itshape There exist $C_1$ and $C_2$, such
  that for every $N$, every small enough value of the coupling
  constant $a$, every integer $r<C_1/a$, and (total or specific)
  energy less than $C_2/r^2$, there exists an analytic canonical
  transformation, under which the Hamiltonian~\eqref{e.H} takes the
  form
  \begin{equation*}
    \Ham{r}{} = H_\Omega + Z_0 + \dots + Z_r + {P^{(r+1)}}\ ,
    \qquad\qquad
    \{H_\Omega,Z_s\}=0 \quad\forall s\in\{0,\ldots,r\}\ .
  \end{equation*}
  with $H_\Omega$ a system of $N$ identical harmonic oscillators,
  $Z_s$ homogeneous polynomials of order $2s+2$, $P^{(r+1)}$ a
  remainder of order $2r+4$ and higher.  } \vskip10pt

The first aspect we need to remark here is that, in order to control
as in~\cite{GioPP13} the dependence on $N$ in the whole perturbation
construction, we exploit the fact that the Hamiltonian is extensive,
i.e. the energy of the system is, roughly speaking, proportional to
$N$. The extensivity is the result of two general properties of our
(and similar) model, which are introduced and discussed in general in
Section~\ref{s:extensivity}. The first one is the translational
invariance, that we formalize through what we call cyclic symmetry
(see definition~\ref{d.cs} and in general subsection~\ref{ss:form}):
as it happens in a cyclic group, we exploit this discrete symmetry by
introducing the idea of a generator $f$ (a ``seed'' in our
terminology, see \eqref{e.cycl-fun}) of a translation-invariant
function $F$. The second property is the short interaction range (see
subsection~\ref{ss.int.range}): actually~\eqref{e.H} in the original
variables possesses a finite range (nearest neighbor) interaction,
which is immediately replaced, in the perturbation construction, by an
infinite range interaction with an exponentially decaying strength
with the distance. These properties of cyclic symmetry and exponential
decay in the interaction range are explicitly controlled in particular
for the whole transformed Hamiltonian in Theorem~\ref{prop.gen},
allowing us to obtain estimates uniform with $N$. Indeed, it is
possible to implement the whole perturbation scheme at the level of
the seeds, whose norms (due to the short range interaction) are
independent\footnote{Although it is not our aim to give here a formal
  definition of ``extensivity'', we could associate the concept of
  being ``extensive'' for a function $F$, with the following two
  properties: for $F$ to be cyclically symmetric and for its
  generator/seed $f$ to have a norm independent of $N$. Indeed this
  could be a rather general characterization, which is meaningful for
  every function. And if in particular $F$ represents the potential of
  an interaction force, then, since the independence of $\norm{f}$
  with respect to $N$ is equivalent to the sufficiently fast decay of
  the interaction with the distance among sites, one could recover the
  usual idea of extensivity given by translation invariance plus short
  range interaction.} of $N$. In our opinion this represents the main
original aspect in the use of the translational discrete symmetry
inside a perturbation construction for an Hamiltonian chain. The
possibility to preserve a discrete symmetry while performing a normal
form construction is surely not completely new: as an example, we
could mention \cite{Rink01,Rink03}, where the dyhedral group
symmetries have been successfully exploited to show the Liouville
integrability of the Birkhoff-Gustavson normal form for a periodic FPU
lattice. However, the benefits of our strategy of working at the level
of the ``seeds'' in the perturbation scheme go beyond the information
on the structure of the normal form: it is the key ingredient to even
provide estimates, also with a sharp dependence on the number of
degree of freedom of the system.

Another aspect to remark is the validity of the normal form Theorem
both in specific and total energy regime: and if in particular we
restrict to the small total energy regime some dynamical information
can be immediately obtained, since it is possible to deduce a somewhat
complementary result to that of~\cite{GioPP13}. Indeed, in
Corollary~\ref{c.Hom.K.var} (see Section~\ref{s:results}), we have
provided a long time adiabatic invariance of the $\ell^2$ norm
$H_\Omega$ in the classical sense (i.e. not in the probabilistic
formulation of~\cite{GioPP13}). An analogue result in the form of a
dynamical application of our Theorem in the specific energy regime is
instead a much more difficult task, and reasonable results in that
direction are still missing.

It is nevertheless worth to put more into evidence other potential
advantages of the present normal form construction, which gives more
information on the structure of the Hamiltonian. Indeed, in
Section~\ref{s:application} we concentrate on the low order normal
form $H_\Omega + Z_0 + Z_1$ (hence choosing $r=1$ in the Theorem),
which represents a generalized discrete nonlinear Schr\"odinger
(GdNLS) chain, characterized by all-to-all sites couplings, both in
the linear and nonlinear terms, with exponential decay of the
coefficients with the distance between sites.  Since it turns out that
such a decay is given by powers of the small coupling constant $a$, a
truncation of this normal form results in the usual dNLS model, which
is well known to provide a leading order approximation of the KG
evolution, both in the continuum limit (as for example
in~\cite{BamCP09}) and in the anticontinuum limit (as discussed for
example in \cite{PelS12} or formally used in the modulational
instability description like in \cite{DauDP97,ClaKKS93}).

We think that such a GdNLS model can be interesting by itself: indeed,
due to the approach used in its construction, it contains the right
normal form for several different regimes with respect to the relative
smallness between the two small parameters, the coupling and the
energy.

We also observe that, in the regime of small energy, the result is
valid also in the case of the soft nonlinearity, i.e. with a minus in
front of the quartic term; we stress that, in such a case, the second
order normal form $H_\Omega + Z_0 + Z_1 + Z_2$ could be seen as a
perturbation (again due to the all-to-all sites couplings) of the
cubic-quintic dNLS (see, e.g.,~\cite{ChoP11,CarTCM06}), with competing
nonlinearities.

We close the comments on the GdNLS with the remark that it could be
the starting point for several applications which concern the
Klein-Gordon model, like the variational approximations for breather
solutions (see, e.g.,~\cite{ChoPS12,ChoP11}), the approximation of the
small amplitude Cauchy problem, the existence and linear stability of
multibreathers (see, e.g.,~\cite{KouI02,PelKF05,KouK09,PelS12}).
Moreover there are several recent works on models with more than first
neighbor interactions or with different nonlinearities,
like~\cite{KouKCR13,ChoCMK11,Rap13,Yos12} where spatially localized
periodic orbits, as breathers or multibreathers, are studied: with
respect to this, we expect that the approach proposed in the present
paper can be suitably extended in these more general models, leading
to different GdNLS-like normal forms. As an example, in a subsequent
paper~\cite{PP14b}, following also some ideas from~\cite{BamN98} and
still in the spirit of dealing with the case $N<\infty$ (see
\cite{PP12} for a dNLS study), we apply this normal form construction
to a mixed KG/FPU model in order to provide a result of long time
approximation of the dynamics close to site-symmetric breather
solutions of a GdNLS model, whose existence and stability are proved
as intermediate step.

As a last remark on the potential benefit of the present normal form
and of the techniques developed and used, we recall the realm of
numerical and/or symbolic computations. Indeed it would be interesting
to figure out how to exploit our formalization of the extensivity in
infinite systems (e.g. some classes of PDEs) in order to perform
algebraic manipulations for these type of systems.

The paper is structured as follows. In Section~\ref{s:extensivity} we
recall the formalization of the the physical properties of the model,
including some results of~\cite{GioPP13} and adding the control of
Hamiltonian vector fields.  In Section~\ref{s:results} we present and
further comment the results of the paper in a more detailed and
complete form. In Section~\ref{s:application} we discuss the GdNLS
normal form. In Section~\ref{s:construction} the formal construction
is presented and most of the proofs of the estimates are given, with
further technical proof deferred to the Appendix~\ref{s:6}.

%
%
%
%

\section{A formalization of extensivity: cyclic symmetry and short
  range interaction}
\label{s:extensivity}

This Section is devoted to the formalization of two fundamental,
though quite general, properties of the KG chain, i.e. the discrete
translation invariance that we call \emph{cyclic symmetry}, and the
short interaction range. Most of the definitions and the results have
been already introduced in~\cite{GioPP12,GioPP13}: we repeat them here
in Subsections~\ref{ss:form} and \ref{ss.int.range}, with a hopefully
more terse exposition, to let the present paper be self-contained. In
Subsection~\ref{ss.Hvf} we add some new results , i.e. the treatment
of vector fields, which were not necessary in our previous papers.

The system under investigation, i.e. the finite but large KG chain
described by the Hamiltonian~\eqref{e.H}, possesses some general
properties shared by a lot of many particle systems, as discussed in
\cite{GioPP13}: they are characterized by two-body conservative forces
with smooth potential which are invariant with respect to rotations
and/or translations.  These properties are quite general ones.

Since we will apply our construction on the KG chain, we restrict our
attention to a system of identical particles on a $d$--dimensional
lattice, with a short or even finite range interaction\footnote{{We
    recall that by ``finite range interaction'' we mean an interaction
    which, for any particle, involves only a finite number of
    neighbors, independent of $N$; a ``short range interaction''
    instead may involve even an all-to-all interaction provided its
    strength decays fast enough with the distance.}}.  Moreover it is
enough to know the local interaction of a particle with its neighbors
and the complete Hamiltonian is the sum of the contribution of every
particle to both the kinetic and the potential energy.  We thus have
the presence of both such a cyclic symmetry, and of a short range
interaction potential. These properties characterize and somewhat
formalize the fact that the Hamiltonian is {\sl extensive}, i.e. it is
proportional to the number of degrees of freedom $N$.  Functions
possessing the same extensivity property of the Hamiltonian are
particularly relevant.

\subsection{Cyclic symmetry}
\label{ss:form}

We consider the simplified model of a finite one dimensional lattice
with periodic boundary conditions and finite range interactions
(nearest neighbors). We denote by $x_j,\,y_j$ the position and the
momentum of a particle, with $x_{j+N} = x_j$ and $y_{j+N}=y_j$ for any
$j$.

\paragraph{Cyclic symmetry. }
\label{p:cyclic}

We formalize one of the ingredient of the extensivity, i.e. discrete
translation invariance, by using the idea of \emph{cyclic
  symmetry}. In~\cite{GioPP12,GioPP13} we decided to use such a new
and nonstandard terminology both to remind that the associated group
is a cyclic one, and also because we think we exploited the invariance
in a novel way. The \emph{cyclic permutation} operator $\tau$, acting
separately on the variables $x$ and $y$, is defined as
\begin{equation}
\label{e.perm}
\tau(x_1,\ldots,x_N) = (x_2,\ldots,x_N,x_1)\ ,\quad
\tau(y_1,\ldots,y_N) = (y_2,\ldots,y_N,y_1)\ .
\end{equation}
We extend its action on the space of functions as
\begin{equation*}
\bigl(\tau f\bigr)(x,y) = f(\tau(x,y)) = f(\tau x,\tau y) \ .
\end{equation*}

\begin{definition}
\label{d.cs}
We say that a function $F$ is \emph{cyclically symmetric} if
$\tau F = F$.
\end{definition}

In order to further exploit the symmetry, we now try to formalize the
idea that, from the point of view of information content, there is a
lot of redundancy in a cyclically symmetric function, i.e. it can be
reconstructed from a ``smaller'' object.  We thus introduce an
operator, indicated by an upper index $\oplus$, acting on functions:
given a function $f$, a new function $F= f^{\oplus}$ is constructed as
\begin{equation}
  F= f^{\oplus} := \sum_{l=1}^{N} \tau^l f \ .
\label{e.cycl-fun}
\end{equation}
We shall say that $f^{\oplus}(x,y)$ is generated by the \emph{seed}
$f(x,y)$. We will try to use the convention of denoting cyclically
symmetric functions with capital letters and their seeds with the
corresponding lower case letter.

\begin{lemma}[see \cite{GioPP13}]
The following holds:
\begin{enumerate}
\item given a seed $f$, then for $F= f^{\oplus}$ one has $\tau F = F$;
\item given a function $F$, such that $\tau F = F$, then there exist a
  (not unique) seed $f$ such that $F= f^{\oplus}$;
\item for any integers $s_1,\,s_2$, $(f_1+f_2)^{\oplus} =
  \tond{\tau^{s_1} f_1 +\tau^{s_2}f_2}^{\oplus}$;
\item the Poisson bracket between two cyclically symmetric functions is
      also cyclically symmetric, i.e. we may write $h^{\oplus} =
      \Poi{f^\oplus}{g^\oplus}$.  A candidate seed is $h =
      \Poi{f}{g^\oplus}$.
\end{enumerate}
\end{lemma}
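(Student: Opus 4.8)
The plan is to prove the four items in turn; the only non-formal ingredients are the identity $\tau^N=\Id$ (iterating the cyclic permutation $N$ times restores any configuration, by the periodicity $x_{j+N}=x_j$, $y_{j+N}=y_j$) and the fact that $\tau$ is a \emph{linear canonical} transformation — it permutes the $x_j$ and the $y_j$ by the same permutation, hence preserves $\sum_j dx_j\wedge dy_j$ and therefore satisfies $\{\tau\psi_1,\tau\psi_2\}=\tau\{\psi_1,\psi_2\}$ for all functions $\psi_1,\psi_2$.

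For item 1, I would compute $\tau F=\sum_{l=1}^N\tau^{l+1}f$ and reindex, using $\tau^{N+1}f=\tau f$, to recover $\sum_{l=1}^N\tau^l f=F$. For item 2 the natural candidate is $f=F/N$: since $\tau F=F$ forces $\tau^l F=F$ for every $l$, one gets $f^\oplus=\tfrac1N\sum_{l=1}^N\tau^l F=F$; the seed is not unique because $\oplus$ annihilates every ``coboundary'' $g-\tau g$ — the sum $\sum_{l=1}^N\tau^l(g-\tau g)$ telescopes to $\tau g-\tau^{N+1}g=0$ by $\tau^N=\Id$ — so $f$ and $f+(g-\tau g)$ generate the same $F$ for arbitrary $g$. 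For item 3, since $\oplus$ is linear it suffices to check $(\tau^s f)^\oplus=f^\oplus$ for any integer $s$, which is immediate because $(\tau^s f)^\oplus=\sum_{l=1}^N\tau^{l+s}f$ is, by $\tau^N=\Id$, just a reordering of $\sum_{l=1}^N\tau^l f$; then $(f_1+f_2)^\oplus=f_1^\oplus+f_2^\oplus=(\tau^{s_1}f_1)^\oplus+(\tau^{s_2}f_2)^\oplus=(\tau^{s_1}f_1+\tau^{s_2}f_2)^\oplus$.

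For item 4 the canonicity of $\tau$ gives $\tau\{F,G\}=\{\tau F,\tau G\}=\{F,G\}$ whenever $\tau F=F$ and $\tau G=G$, so $\{F,G\}$ is cyclically symmetric and by item 2 admits a seed. To verify that $h=\{f,g^\oplus\}$ is one such seed, I would compute $h^\oplus=\sum_{l=1}^N\tau^l\{f,g^\oplus\}=\sum_{l=1}^N\{\tau^l f,\tau^l g^\oplus\}=\sum_{l=1}^N\{\tau^l f,g^\oplus\}$, using $\tau^l g^\oplus=g^\oplus$ from item 1, and then bilinearity of the bracket pulls the sum into the first argument to give $\{f^\oplus,g^\oplus\}=\{F,G\}$.

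I do not anticipate a genuine obstacle: everything collapses to $\tau^N=\Id$ and to the elementary fact that a coordinate permutation acting identically on positions and momenta is canonical. The one point worth spelling out is that $\oplus$ kills coboundaries $g-\tau g$, since this underlies both the non-uniqueness in item 2 and the freedom exploited in item 3 (and, later, in the normal form construction, where choosing a convenient representative of the seed is essential).
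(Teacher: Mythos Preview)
Your proof is correct and is the natural argument. Note that the paper itself does not give a proof of this lemma: it is stated with the annotation ``see \cite{GioPP13}'' and the proof is deferred to that reference, so there is no in-paper proof to compare against; your write-up is precisely the elementary verification one would expect.
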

It is worth to stress that property 4 of the above Lemma is the one
which allows to perform the normal form construction by preserving the
cyclic symmetry. From a purely formal point of view, the compatibility
of the discrete translation invariance with a canonical perturbation
construction is not new, since the possibility to perform the
Lie-transform normalization by preserving a discrete (and symplectic)
symmetry is a well known fact (see for example\cite{Rink01,Rink03} and
references therein). Once again we bring this fact down the level of
seeds to exploit it further.

\paragraph{Polynomial norms. }
\label{p:polinorms}

Let $f(x,y)=\sum_{j,k} f_{|j|+|k|=s} x^j y^k$ be a homogeneous
polynomial of degree $s$ in $x,\,y$.  Given a positive $R$, we define
its polynomial norm as
\begin{equation}
\label{e.polinorm}
\|f\|_R := R^s \sum_{|j|+|k|=s} |f_{j,k}|\ .
\end{equation}
If $R$ represents the radius of the ball centered in the origin of the
phase space, endowed for example with the euclidean norm, then one
would have
\begin{displaymath}
|f(x,y)|\leq \sup_{\norm{(x,y)}\leq R}|f(x,y)| \leq \|f\|_R\ .
\end{displaymath}

\paragraph{Norm of a cyclically symmetric function. }
\label{p:norm-ext}

Assume now that we are equipped with a norm for our functions
$\norm{\cdot}$, e.g. the above defined polynomial norm. We introduce a
corresponding norm $\|\cdot\|^{\oplus}$ for a cyclically symmetric function
$F=f^\oplus$ by defining
\begin{equation}
\label{e.norm-germ}
\bigl\|F\bigr\|^{\oplus} = \|f\|\ ,
\end{equation}
i.e. we actually measure the norm of the seed. An obvious remark is
that the norm so defined depends on the choice of the seed, but this
will be harmless in the following.  The relevant facts are the
following:
\begin{lemma}[see \cite{GioPP13}]It holds:
\begin{enumerate}
\item for any $s$ one has $\norm{\tau^s f} = \norm{f}$; 
\item  the inequality
$\|f^{\oplus}\| \le N \bigl\|f^{\oplus}\bigr\|^{\oplus}$
holds true for any choice of the seed. 
\end{enumerate}
\end{lemma}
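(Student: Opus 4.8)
The plan is to dispatch the two items separately, each reducing to an elementary property of the polynomial norm \eqref{e.polinorm} together with the structure of the cyclic permutation $\tau$.

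For item~1, I would first make explicit how $\tau$ acts on monomials. From \eqref{e.perm} the operator $\tau$ relabels the position/momentum variables by the cyclic shift $\ell\mapsto\ell-1\ (\mathrm{mod}\ N)$, and this induces a permutation $\sigma$ of the finite set of multi-indices $(j,k)$ with $|j|+|k|=s$; explicitly $\tau^s f=\sum_{j,k} f_{j,k}\,x^{\sigma^s(j)}y^{\sigma^s(k)}$, so the coefficient of a given monomial in $\tau^s f$ equals $f_{\sigma^{-s}(j),\sigma^{-s}(k)}$. Since $\sigma$ is a bijection of $\{(j,k):|j|+|k|=s\}$ onto itself, the sum $\sum_{|j|+|k|=s}|f_{j,k}|$ in \eqref{e.polinorm} is invariant under this relabelling, whence $\norm{\tau^s f}=\norm{f}$. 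The argument only uses that the norm depends on the unordered family of coefficients, so it is not specific to the polynomial norm.

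For item~2, I would combine item~1 with subadditivity of $\norm{\cdot}$. By the definition \eqref{e.norm-germ} of the $\oplus$-norm one has $\norm{f^\oplus}^\oplus=\norm{f}$; on the other hand, writing $F=f^\oplus=\sum_{\ell=1}^N\tau^\ell f$ (a homogeneous polynomial of the same degree as $f$, since $\tau$ preserves the degree) and applying the triangle inequality and then item~1,
\[
\norm{F}=\norm{\textstyle\sum_{\ell=1}^N\tau^\ell f}\le\sum_{\ell=1}^N\norm{\tau^\ell f}=\sum_{\ell=1}^N\norm{f}=N\,\norm{f}=N\,\norm{f^\oplus}^\oplus,
\]
which is precisely the asserted inequality, valid for any seed $f$ representing $F$.

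I do not expect any real obstacle here: the only point needing a line of care is checking that $\tau$ genuinely induces a bijection on each homogeneous layer of multi-indices, which is immediate from \eqref{e.perm}. It is worth noting that the factor $N$ in item~2 is far from sharp and precisely quantifies the redundancy of a cyclically symmetric function, but this crude bound is all that the subsequent estimates require.
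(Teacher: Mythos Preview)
Your argument is correct. The paper itself does not supply a proof of this lemma but simply refers the reader to~\cite{GioPP13}; your approach---observing that $\tau$ permutes the monomial basis within each homogeneous layer, hence leaves the $\ell^1$ sum of coefficients in~\eqref{e.polinorm} unchanged, and then combining this with the triangle inequality for item~2---is exactly the standard route and is presumably what the cited reference contains.
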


\noindent
This is particularly useful when the norm of the seed turns out to be
independent of $N$, since in such a case the dependence on the number
of degrees of freedom is completely factorized and totally under
control. Moreover one could verify if a function is ``extensive'' by
checking if it is cyclically symmetric and with its cyclic norm
independent of $N$. We remark that, if the function under
consideration is an interaction potential, then obviously this second
property is equivalent to the interaction range\footnote{more on this
  point in Subsection~\ref{ss.int.range}} being short: indeed, it is
possible to get a seed whose norm \eqref{e.polinorm} is independent on
$N$ also in the case of an all-to-all interaction like
\begin{displaymath}
\sum_{i,j}a_{i,j}(x_i-x_j)^2\ ,
\end{displaymath}
provided some suitable decay of the coefficients $|a_{i,j}|$ with the
distance $|i-j|$.

\paragraph{Circulant matrices. }
\label{p:circulant}

When we deal with particular functions which are quadratic forms, the
cyclic symmetry assumes a particular form.  Let us thus restrict our
attention to the harmonic part of the Hamiltonian: it is a quadratic
form represented by a matrix $A$
\begin{equation}
\label{e.H0}
H_0(x,y) = \frac12 y\cdot y + \frac12 Ax\cdot x.
\end{equation}
If the Hamiltonian $H_0$ is cyclically symmetric, then
$H_0=h_0^\oplus$.  This implies that $A$ commutes with the matrix
$\tau$ representing the cyclic permutation~\eqref{e.perm}
\begin{equation}
\label{e.tau}
\tau_{ij}=
\begin{cases}
1\quad {\rm if}\ i=j+1\>({\rm mod}\,N)\>,\\
0\quad \rm{otherwise}.
\end{cases}
\end{equation}
We remark that the matrix $\tau$ is orthogonal and generates a cyclic
group of order $N$ with respect to the matrix product.

We recall the following
\begin{definition}
\label{d.circulant}
A matrix $A\in {\rm Mat}_{\RR}(N,N)$ is said to be \emph{circulant} if
\begin{equation}
\label{e.circ.1}
A_{j,k} = a_{(k-j)\> ({\rm mod}\,N)}\ .
\end{equation}
\end{definition}
Actually, the set of circulant matrices is a subset of Toepliz
matrices, i.e those which are constant on each diagonal. For a
comprehensive treatment of circulant matrices, see, e.g.,
\cite{Dav79}.
We just remind some properties that will be useful later.

\begin{enumerate}
\item The set of $N\times N$ circulant matrices is a real vector space
  of dimension $N$, and a basis is given by the cyclic group generated
  by $\tau$.
\item The set of matrices which commute with $\tau$  coincides with
  the set of circulant matrices.
\item The set of eigenvalues of a circulant matrix is the Discrete
  Fourier Transform of the first row of the matrix and viceversa.
\item Let $M^2=A$, where $A$ is circulant; then $M$ is circulant,
  too. Moreover, from the definition of $M:=\sqrt{A}$, it follows that
  if $A$ is symmetric, then $M$ is also symmetric.
\end{enumerate}

In our problem the cyclic symmetry of the Hamiltonian implies that the
matrix $A$ of the quadratic form is circulant. Obviously it is also
symmetric, so that the space of matrices of interest to us has
dimension~$\left\lfloor\frac{N}2\right\rfloor+1$. Indeed, a circulant
and symmetric matrix is completely determined by
$\left\lfloor\frac{N}2\right\rfloor+1$ elements of its first line.

\def\supp{\mathop{\rm supp}}
\def\diam{\mathop{\rm diam}}
\def\corsivo#1{{\sl #1}}

\subsection{Interaction range}
\label{ss.int.range}

Besides the translation invariance, usually the second ingredient for
the formalization of extensivity is the sufficiently fast decay of the
interaction strenght\footnote{At least for those function for which
  the concept makes sense, i.e. those giving an interaction
  potential.}, which is equivalent to the independence on $N$ of the
cyclic norm of the interaction potential. We give here some
definitions and properties at the level of the functions' seeds.  We
restrict our analysis to the set of polynomial functions.  We start
with some definitions. Let us label the variables as $x_l,y_l$ with
$l\in\ZZ$, and consider a monomial $x^jy^k$ (in multiindex notation).
\begin{definition}
We define the \emph{support} $S(x^jy^k)$ of the
monomial and the \emph{interaction distance} $\ell(x^jy^k)$ 
as follows: considering the exponents $(j,k)$ we set
\begin{equation}
\label{e.supp}
S(x^jy^k) = \{l\>:\>j_l\neq0 {\rm\ or\ } k_l\neq0 \}\ ,\quad
\ell(x^jy^k) = \diam\bigl(S(x^jy^k)\bigr) \ .
\end{equation}
We say that the monomial is \emph{left aligned} in case
$S(x^jy^k)\subset \{0,\ldots,\ell(x^jy^k)-1\}$.
\end{definition}
The definitions above is extended to a homogeneous polynomial $f$ by
saying that $S(f)$ is the union of the supports of all the monomials
in $f$, and that $f$ is left aligned if all its monomials are left
aligned.  The relevant property is that if $\tilde f$ is a seed of a
cyclically symmetric function $F$, then there exists also a left
aligned seed $f$ of the same function $F$: just left align all
monomials in $\tilde f$.

\paragraph{Short range (exponential decay of) interaction.}
\label{p:shortrange}
Although it is not necessary for the interaction to be short, we
consider the case of an exponential decay of the interaction strength,
since in our case this is the property which holds.  For the seed $f$
of a function consider the decomposition
\begin{equation}
\label{e.decomp}
f(z) =  \sum_{m\ge 0} f^{(m)}(z)\ ,\quad
 f^{(m)}(z) = \sum_{\ell(k)\le m} f_k z^k\ ,
\end{equation}
assuming that every $f^{(m)}$ is left aligned.
\begin{definition}
The seed $f$ (of a cyclically symmetric function) is of class
$\Dscr(C_f,\sigma)$ if
\begin{equation}
\label{dcdm.5}
\norm{f^{(m)}}_1 \le C_f e^{-\sigma m}\ ,\quad C_f\gt 0\,,\> \sigma\gt 0\ .
\end{equation}
\end{definition}

\begin{remark}
It is immediate to notice that when $C_f$ does not depend on $N$, then
\begin{displaymath}
\norm{f}_1 \leq \sum_{m\geq 0}\norm{f^{(m)}}_1 \leq C_f \sum_{m\geq
  0}e^{-\sigma m} = \frac{C_f}{1-e^{-\sigma }}\ ,
\end{displaymath}
hence $\norm{f}_1$ does not grow with $N$.
\end{remark}

%
%
%
%

\subsection{Hamiltonian vector fields}
\label{ss.Hvf}

We introduce here some definitions and some results concerning
Hamiltonian vector fields, their Lie derivatives, and the control of
their norms. This part is completely absent in \cite{GioPP13} since in
such a paper all the perturbation construction is performed at the
level of the Hamiltonian functions and not at the level of the vector
fields.

We consider, as an Hamiltonian, a cyclically symmetric function $F$
with seed $f$; we will make use of the common notation\footnote{For an
  easier notation we drop the Hamiltonian $F$ in the indexes of the
  components of the vector field.} $X_F=(X_1, \ldots,X_N,X_{N+1},
\ldots, X_{2N})$ to indicate the associated Hamiltonian vector field
$J\nabla F$, with $J$ given by the Poisson structure. The first easy,
but important, result is that also the Hamiltonian vector field
inherits, in a particular form, the cyclic symmetry; a possible choice
for the equivalent of the seed turn out to be the pair $(X_{1},
X_{N+1})$, i.e. the first and the $(N+1)^{\rm th}$ components of the
vector. This fact, which will be more clear thanks to the forthcoming
Lemma~\ref{l.seme.campo}, allows us to define in a reasonable and
consistent way the following norm
\begin{equation}
\label{e.def1}
\ncamp{X_F}_R := \norm{X_1}_R+\norm{X_{N+1}}_R \ .
\end{equation}

\begin{lemma}
\label{l.seme.campo}
Given $F=f^\oplus$, for the components of its Hamiltonian vector field
$X_F$ we have\footnote{An immediate consequence of
  \eqref{e.seme.campo} is that, defining the norm of the vector field
  as the sum of its components (i.e. a finite $\ell^1$ norm), we would
  get $\norm{X_F}_R = N\ncamp{X_F}_R$, which in turn justify the
  definition \eqref{e.def1}, and make it consistent with our previous
  definition~\eqref{e.norm-germ}.}
\begin{equation}
\label{e.seme.campo}
\begin{aligned}
 X_j &= \tau^{j-1} X_1
\cr
 X_{N+j} &= \tau^{j-1} X_{N+1}
\end{aligned}
\qquad\qquad
j=1,\ldots,N \ .
\end{equation}
Moreover, it holds
\begin{equation}
\label{e.xxx}
 \ncamp{X_F}_R = \sum_{l=1}^{2N}\norm{\derp{f}{z_l}}_R.
\end{equation}
\end{lemma}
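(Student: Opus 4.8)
The plan is to reduce both assertions to a single chain-rule identity for the action of $\tau$ on partial derivatives, after which everything becomes bookkeeping with $\tau^N=\mathrm{id}$ and indices read modulo $N$. The elementary identity I would record is that, for any smooth $g$,
\[
\derp{(\tau g)}{z_j}\;=\;\tau\!\left(\derp{g}{z_{j-1}}\right),
\]
where $\tau$ shifts indices by one within the $x$-block and within the $y$-block and $j-1$ is read modulo $N$; this is just the chain rule applied to $(\tau g)(x,y)=g(\tau x,\tau y)$, using that $\tau$ carries the coordinate $z_i$ to $z_{i+1}$. Iterating gives $\partial_{z_j}(\tau^l g)=\tau^l(\partial_{z_{j-l}}g)$, and applying this term by term to $F=f^{\oplus}=\sum_{l=1}^{N}\tau^l f$ yields
\[
\derp{F}{x_j}=\sum_{l=1}^{N}\tau^l\!\left(\derp{f}{x_{j-l}}\right),
\qquad
\derp{F}{y_j}=\sum_{l=1}^{N}\tau^l\!\left(\derp{f}{y_{j-l}}\right).
\]

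To obtain \eqref{e.seme.campo} I would take the $j=1$ case of these formulas, apply $\tau^{j-1}$, and reindex the sum by $l\mapsto l+j-1\pmod{N}$: since $\tau^N=\mathrm{id}$ and variable indices are read mod $N$, the sum is carried exactly into $\partial F/\partial z_j$, so $\partial_{y_j}F=\tau^{j-1}\partial_{y_1}F$ and $\partial_{x_j}F=\tau^{j-1}\partial_{x_1}F$. Since the components of $X_F=J\nabla F$ are $X_j=\partial_{y_j}F$ and $X_{N+j}=-\partial_{x_j}F$ for $j=1,\dots,N$, this is precisely $X_j=\tau^{j-1}X_1$ and $X_{N+j}=\tau^{j-1}X_{N+1}$.

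For \eqref{e.xxx} I would rewrite $X_1=\partial_{y_1}F=\sum_{l=1}^{N}\tau^l(\partial f/\partial y_{1-l})$ via $m=1-l\pmod{N}$ as $X_1=\sum_{m=1}^{N}\tau^{1-m}(\partial f/\partial y_m)$, and likewise $X_{N+1}=-\sum_{m=1}^{N}\tau^{1-m}(\partial f/\partial x_m)$; by \eqref{e.def1}, $\ncamp{X_F}_R=\norm{X_1}_R+\norm{X_{N+1}}_R$. Since the polynomial norm \eqref{e.polinorm} is an $\ell^1$ norm on coefficients and is $\tau$-invariant, the triangle inequality immediately gives ``$\le$'' in \eqref{e.xxx}. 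The real content is that this is an equality, and here one uses that the seed may be taken left aligned (cf. Subsection~\ref{ss.int.range}): then no cancellation occurs among the summands $\tau^{1-m}(\partial f/\partial y_m)$, because if two of them ($m_1\ne m_2$) shared a monomial, unwinding the shifts would force two monomials of $f$ to be translates of one another by a nonzero shift, whereas a left aligned monomial has $0$ as the minimum of its support, so a nontrivial translate of it is no longer left aligned. Hence the $\ell^1$ norm is additive over the disjointly supported summands, $\norm{X_1}_R=\sum_m\norm{\partial f/\partial y_m}_R$ and similarly for $X_{N+1}$, and adding the $x$- and $y$-contributions gives \eqref{e.xxx}.

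I expect this no-cancellation step to be the only real obstacle: everything preceding it is routine manipulation of $\tau$ and of indices mod $N$, whereas the equality in \eqref{e.xxx} genuinely relies on the normalization of the seed and would fail for a generic seed of the same cyclically symmetric $F$.
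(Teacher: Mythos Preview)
Your proof of \eqref{e.seme.campo} is essentially the paper's: both rest on the chain-rule commutation between $\partial_{z_j}$ and $\tau$, followed by a cyclic reindexing of the sum $F=\sum_l\tau^l f$. (The paper records the commutation as $\partial_{x_j}[f\circ\tau^l]=\tau^{-l}[\partial f/\partial x_{j+l}]$, which has the shift going the wrong way; your form $\partial_{z_j}(\tau^l g)=\tau^l(\partial_{z_{j-l}}g)$ is the correct one, and after the cyclic reindex both lead to the same conclusion.)

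For \eqref{e.xxx} you are more careful than the paper. The paper's proof is a single sentence---``commutation properties plus $\tau$-invariance of the polynomial norm''---which, as you note, strictly yields only the inequality $\ncamp{X_F}_R\le\sum_l\|\partial f/\partial z_l\|_R$. You correctly isolate that equality needs a no-cancellation argument, and your reduction (a shared monomial between $\tau^{1-m_1}(\partial_{y_{m_1}}f)$ and $\tau^{1-m_2}(\partial_{y_{m_2}}f)$ forces, after multiplying back the differentiated variable, two monomials of $f$ to be nontrivial $\tau$-translates of one another) is sound. Two remarks: only the ``$\le$'' direction is actually used downstream (in the proof of Lemma~\ref{lem.Hamfield}), so the paper's terseness is harmless for its purposes; and your clause ``a nontrivial translate of a left-aligned monomial is not left aligned'' can fail for monomials whose support is all of $\{0,\dots,N-1\}$, though such monomials do not occur in the short-range regime the paper works in.
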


\begin{proof}
We start by observing the following identity about the commutation
properties of partial derivative and cyclic permutation defined in
\eqref{e.perm}:
\begin{equation*}
 \frac{\partial}{\partial x_j} \quadr{f\circ\tau^l}
  = \tau^{-l}\quadr{\frac{\partial f}{\partial x_{j+l}}} \ ,
\end{equation*}
where, as usual, all the index for the variables are meant modulo $N$,
independently for each set $x$ and $y$. The similar relation holds for
the partial derivatives with respect to the $y$ variables.  

Using that $F= \sum_l \tau^lf$, and using the above relation to
``extract'' a permutation $\tau^{1-j}$, we have for $j=1,\ldots,N$
\begin{equation*}
\begin{aligned}
 X_j \equiv \frac{\partial F}{\partial y_j}
  =  \sum_l \frac{\partial}{\partial y_j} \quadr{\tau^lf}
 &= \sum_l \tau^{j-1}\quadr{\frac{\partial}{\partial y_{1}}
                            \tond{\tau^{l+j-1}f}}
  =
\\
 &= \tau^{j-1} \sum_m \frac{\partial}{\partial y_1} \quadr{\tau^mf}
  =  \tau^{j-1} \frac{\partial F}{\partial y_1}
\end{aligned}
\end{equation*}
which gives the first of \eqref{e.seme.campo}. Analogously for
$X_{N+j}$.  Concerning the equality~\eqref{e.xxx} one uses again the
commutation properties stated at the beginning of the proof, and then
the invariance of the polynomial norm $\norm{\cdot}_R$ under the
action of $\tau$.
\end{proof}

\begin{definition}
We denote with $\Ph$ the phase $\left(\RR^{2N},\norm{\cdot}\right)$,
endowed by either the euclidean norm ($\ell^2$) or the supremum norm
($\ell^\infty$). When necessary, we will specify the norm used with a
subscript, i.e. $\Ph_2$ with $\norm{\cdot}_2$ and $\Ph_\infty$ with
$\norm{\cdot}_\infty$.
\end{definition}

It is easy to check that, when dealing with ``local'' potentials like
$V(x)=\sum_j \frac1{2r}x_j^{2r}$, the corresponding Hamiltonian field
$X_V$ fulfills
\begin{displaymath}
\norm{X_V(x,y)} \leq  \norm{x}^{2r-1}\ ;
\end{displaymath}
with both the above introduced norms. Our aim is to generalize the
above estimate to cyclically symmetric Hamiltonian fields $X_F$ with
$\ncamp{X_F}_1<\infty$. To motivate the forthcoming
definition~\eqref{e.def.op.norm}, we remark that for any polynomial
vector field $X(z)$ of degree $r$ there exists a $r$-linear operator
$\tilde X(z_1,\ldots,z_r)$ such that
\begin{equation}
\label{e.multilin.X}
X(z) = \tilde X(z,\ldots,z)\ .
\end{equation}

\begin{definition}
For a polynomial vector field $X$ of degree $r$ define an ``operator
norm''
\begin{equation}
\label{e.def.op.norm}
\norm{X}_{\rm{op}}:=\sup_{\norm{z}\not=0}\frac{\norm{X(z)}}{\norm{z}^r}\ ,
\end{equation}
where, on the right hand side, all the $\norm{\cdot}$ can be either
$\norm{\cdot}_2$ or $\norm{\cdot}_\infty$.
\end{definition}

The following result, whose proof is deferred to the
Appendix~\ref{aa:pfield}, gives the above claimed control of the
cyclically symmetric Hamiltonian fields. We stress that it is valid
both in $\Ph_2$ and in $\Ph_\infty$.

\begin{proposition}
\label{p.field}
Let $f$ be an homogeneous polynomial of degree $r+1$ with $r\geq 1$
and $F=f^\oplus$ the cyclically symmetric Hamiltonian generated by
$f$. Then it holds true
\begin{equation}
\label{e.est0}
\norm{X_F}_{\rm{op}} \leq \ncamp{X_F}_1\ .
\end{equation}
\end{proposition}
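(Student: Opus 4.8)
The plan is to estimate $\norm{X_F(z)}$ pointwise in terms of $\ncamp{X_F}_1$ and $\norm{z}^r$, exploiting the cyclic structure \eqref{e.seme.campo} to reduce everything to the two ``seed components'' $X_1$ and $X_{N+1}$. First I would use \eqref{e.multilin.X} to pass to the associated symmetric $r$-linear map; but more directly, since we only need an inequality, I would work with the monomial expansion of $X_1$ and $X_{N+1}$ and estimate each monomial. By Lemma~\ref{l.seme.campo}, the $j$-th component $X_j$ is $\tau^{j-1}X_1$, i.e. it is obtained from $X_1$ by shifting all variable indices; similarly $X_{N+j}=\tau^{j-1}X_{N+1}$. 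Hence a monomial $c\,z^k$ appearing in $X_1$ contributes, in component $j$, the shifted monomial $c\,z^{k+(j-1)}$ (shift of multiindex), and the key point is that the coefficients $|c|$ summed over all monomials of $X_1$ (times $R^r$ with $R=1$) is exactly $\norm{X_1}_1$, and likewise for $X_{N+1}$.

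Next, the crucial combinatorial observation: for the $\ell^\infty$ norm on phase space, $|z^k|\le\norm{z}_\infty^r$ for every degree-$r$ monomial, so summing the $N$ components $|X_j(z)|\le\sum_{\text{monomials of }X_1}|c|\,\norm{z}_\infty^r$ — but this would give an extra factor $N$. To avoid it, I would instead sum the \emph{shifted} families: as $j$ ranges over $1,\dots,N$, the monomial $c\,z^{k+(j-1)}$ runs through $N$ distinct monomials whose supports are translates of each other, and for a \emph{fixed} point $z$ the quantity $\sum_{j=1}^N|z^{k+(j-1)}|$ is controlled by $\norm{z}_1\cdot\norm{z}_\infty^{r-1}\le\norm{z}_2^{\,r}$ in the $\ell^2$ case (since $\sum_j|z_{i_1+j}\cdots z_{i_r+j}|\le\big(\sum_i|z_i|^2\big)^{r/2}$ by repeated Cauchy–Schwarz / the arithmetic–geometric bound), and analogously $\le N\norm{z}_\infty^r$ is \emph{not} what we want — rather, in $\Ph_\infty$ one uses $\sum_{l}|\partial f/\partial z_l|$ evaluated at $z$ together with $|z_l|\le\norm{z}_\infty$ so that $\sum_l\norm{\partial f/\partial z_l}_1\norm{z}_\infty^{r}$ already contains no spurious $N$. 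This is exactly where formula \eqref{e.xxx}, $\ncamp{X_F}_1=\sum_{l=1}^{2N}\norm{\partial f/\partial z_l}_1$, does the work: it repackages the sum over the $2N$ partial derivatives of the seed, which is the natural ``per-site'' quantity, rather than the sum over the $N$ (or $2N$) vector-field components.

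Concretely I would argue: $\norm{X_F(z)}_\infty=\max_j|X_j(z)|\le\max_j\sum_{k}|X_{1,k}|\,|z^{k+(j-1)}|\le\norm{X_1}_1\norm{z}_\infty^r$, and similarly for the $X_{N+j}$ block, giving $\norm{X_F(z)}_\infty\le\ncamp{X_F}_1\norm{z}_\infty^r$. For $\Ph_2$, write $X_j(z)=\sum_l\tau^{j-1}\big(\partial_{z_l}f\big)(z)$ grouped so that $\norm{X_F(z)}_2^2=\sum_{j=1}^{N}|X_j|^2+\sum_{j=1}^N|X_{N+j}|^2$, and bound each $|X_j|$ and $|X_{N+j}|$ using that the shifted partial derivatives $\partial_{z_l}f$ evaluated along the cyclic orbit, once summed in $\ell^2$ over $j$, reproduce a bound of the form $\big(\sum_l\norm{\partial_{z_l}f}_1\big)^2\norm{z}_2^{2r}$ after a Minkowski/triangle step over the (finitely many) monomials; here the identity \eqref{e.xxx} is again what makes the right-hand side collapse to $\ncamp{X_F}_1$. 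The main obstacle I anticipate is precisely the $\ell^2$ case: one must avoid the naive factor $N$ (or $\sqrt N$) and carefully use that, for a single monomial $z_{i_1}\cdots z_{i_r}$, the sum over all its $N$ cyclic shifts is at most $\norm{z}_2^r$ — an elementary but slightly delicate multilinear estimate (essentially: $\sum_{j}\prod_{s=1}^r|z_{i_s+j}|\le\prod_{s=1}^r\big(\sum_j|z_{i_s+j}|^r\big)^{1/r}=\norm{z}_r^r\le\norm{z}_2^r$ by Hölder, using $r\ge1$) — and then to re-sum over monomials by the triangle inequality so the coefficient sum is $\norm{X_1}_1$, respectively to organize the two seed components so their contributions add to $\ncamp{X_F}_1$. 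Once this shift-summation lemma is in place, both norms are handled uniformly and the proposition follows; the remaining steps are the bookkeeping deferred, as stated, to Appendix~\ref{aa:pfield}.
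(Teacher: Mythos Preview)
Your $\ell^\infty$ argument coincides with the paper's. For $\ell^2$ your H\"older-over-shifts idea is a genuinely different and cleaner route than the paper's, but the shift-summation inequality as you wrote it has a gap: you claim $\sum_j\prod_{s=1}^r|z_{i_s+j}|\le\norm{z}_r^r\le\norm{z}_2^r$ ``using $r\ge1$'', yet $\norm{z}_r\le\norm{z}_2$ requires $r\ge2$, so this fails for $r=1$ (quadratic seed, linear field), which the proposition explicitly allows. The fix is that after the Minkowski step in $\ell^2$ over $j$ what you actually need is the \emph{squared} shift sum $\sum_j\prod_{s=1}^r|z_{i_s+j}|^2$, and H\"older with exponents $(r,\ldots,r)$ gives this $\le\norm{z}_{2r}^{2r}\le\norm{z}_2^{2r}$, valid for all $r\ge1$ since $2r\ge2$. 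With this correction, Minkowski over the monomials of $X_1$ yields $\bigl(\sum_j|X_j(z)|^2\bigr)^{1/2}\le\norm{X_1}_1\norm{z}_2^r$, similarly for the $X_{N+j}$ block, and adding the two blocks gives $\norm{X_F(z)}_2\le\ncamp{X_F}_1\norm{z}_2^r$. (Your intermediate formula $X_j(z)=\sum_l\tau^{j-1}(\partial_{z_l}f)(z)$ is not correct as written, but you do not actually use it; the argument only needs $X_j=\tau^{j-1}X_1$ and the monomial expansion of $X_1$.)

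For comparison, the paper's $\ell^2$ proof does not use Minkowski/H\"older at all: it expands $\sum_j|X_j(z)|^2=\sum_j\bigl|\sum_k X_{1,k}(\tau^{j-1}z)^k\bigr|^2$ into diagonal and off-diagonal monomial pairs; the diagonal part is handled by $\sum_l(\tau^{l-1}z)^{2j}\le\norm{z}_2^{2r}$ directly; for the off-diagonal cross terms the combined multiindex $k=j+h$ may contain odd entries, and the paper decomposes $k=2k^\natural+k^\sharp$ and applies AM--GM to the odd-exponent part, $|z_{i_1}\cdots z_{i_{2s}}|\le 2^{-s}\prod_{m}(z_{i_{2m-1}}^2+z_{i_{2m}}^2)$, to reduce back to the even case before summing over shifts. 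Your (corrected) H\"older argument bypasses this even/odd bookkeeping entirely, which is the main saving; the paper's approach, on the other hand, stays with purely elementary inequalities and never invokes $\ell^p$ inclusions.
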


We close this Section with a statement (whose proof is also in the
Appendix, see~\ref{aa:Hamfield}) providing the estimate on the
Hamiltonian vector field of a function of class $\Dscr(C_f,\sigma)$.

\begin{lemma}
\label{lem.Hamfield}
Let $F$ be cyclically symmetric homogeneous polynomials of degree $r$
and let its seed $f$ be of class $\Dscr(C_f,\sigma)$; then
\begin{equation}
\label{e.Hamfield}
\ncamp{X_F}_R\leq 4r R^{r-1}\frac{C_f}{(1-e^{-\sigma})^2}\ .
\end{equation}
\end{lemma}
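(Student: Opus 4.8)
The plan is to combine the expression \eqref{e.xxx} for $\ncamp{X_F}_R$ as a sum of polynomial norms of partial derivatives of the seed $f$ with the decay hypothesis \eqref{dcdm.5} on the pieces $f^{(m)}$. First I would recall from \eqref{e.xxx} that $\ncamp{X_F}_R = \sum_{l=1}^{2N}\norm{\partial f/\partial z_l}_R$, and use the decomposition \eqref{e.decomp}, $f=\sum_{m\ge 0}f^{(m)}$, so that $\ncamp{X_F}_R \le \sum_{m\ge 0}\sum_{l=1}^{2N}\norm{\partial f^{(m)}/\partial z_l}_R$. The key local estimate is the following: for a homogeneous polynomial $g$ of degree $r$, one has $\sum_l \norm{\partial g/\partial z_l}_R \le (r/R)\norm{g}_R$. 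This is immediate from the definition \eqref{e.polinorm}: differentiating a monomial $g_k z^k$ with respect to $z_l$ produces $k_l g_k z^{k-e_l}$, whose $R$-norm is $k_l |g_k| R^{r-1}$; summing over $l$ gives $(\sum_l k_l)|g_k|R^{r-1} = r|g_k|R^{r-1} = (r/R)|g_k|R^r$, and then summing over the monomials $k$ yields the claim.

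Next I would control the change of radius from $1$ to $R$: since $f^{(m)}$ is homogeneous of degree $r$ (as $f$ is), $\norm{f^{(m)}}_R = R^r \norm{f^{(m)}}_1 \le R^r C_f e^{-\sigma m}$ by \eqref{dcdm.5}. Combining this with the local estimate applied at radius $R$ (so the differentiated pieces are homogeneous of degree $r-1$), I get
\begin{equation*}
\sum_{l=1}^{2N}\norm{\derp{f^{(m)}}{z_l}}_R \le \frac{r}{R}\norm{f^{(m)}}_R \le \frac{r}{R} R^r C_f e^{-\sigma m} = r R^{r-1} C_f e^{-\sigma m}\ .
\end{equation*}
Summing the geometric series over $m\ge 0$ gives $\ncamp{X_F}_R \le r R^{r-1} C_f/(1-e^{-\sigma})$, which is in fact sharper than the claimed bound by a factor $4/(1-e^{-\sigma})$.

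The only genuine subtlety — and the point I would treat most carefully — is that differentiating $f^{(m)}$ does not quite respect the left-aligned decomposition: $\partial f^{(m)}/\partial z_l$ is a piece of $X_F$ of interaction distance $\le m$, but after one reassembles $\partial f/\partial z_l = \sum_m \partial f^{(m)}/\partial z_l$ and re-decomposes it into its own left-aligned homogeneous pieces, a given interaction-distance-$m'$ component of $\partial f/\partial z_l$ can receive contributions from several $f^{(m)}$ with $m\ge m'$, and the left-alignment shifts may need re-indexing. This bookkeeping is presumably where the extra factors in \eqref{e.Hamfield} (the square in $(1-e^{-\sigma})^2$ and the $4$) come from: re-summing $\sum_{m'\ge 0}\sum_{m\ge m'} C_f e^{-\sigma m}$ produces a second geometric factor, and handling the at most two boundary terms per variable arising from realigning $x$- and $y$-supports modulo $N$ contributes the remaining constant. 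I would therefore argue the bound at the level of $\partial f/\partial z_l$ directly via its own $\Dscr$-type decomposition, using $r$ for the overall degree factor, $R^{r-1}$ for the radius, $C_f$ and one factor $1/(1-e^{-\sigma})$ from the decay of $f$, and a second factor $1/(1-e^{-\sigma})$ together with the constant $4$ absorbing the re-summation and the realignment boundary terms — which comfortably yields \eqref{e.Hamfield}. $\square$
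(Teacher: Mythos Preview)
Your first argument is correct and complete, and in fact yields a sharper inequality than~\eqref{e.Hamfield}. The identity $\sum_l \norm{\partial g/\partial z_l}_R = (r/R)\norm{g}_R$ for a homogeneous polynomial $g$ of degree $r$ is exactly right (it is even an equality, not just an inequality), and applying it to each $f^{(m)}$ and summing the geometric series gives $\ncamp{X_F}_R\le rR^{r-1}C_f/(1-e^{-\sigma})$. This already implies~\eqref{e.Hamfield}, so you are done after the first paragraph.

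Your speculation in the last paragraph about realignment bookkeeping is off target: there is no such subtlety, and you should drop that part. The quantity $\ncamp{X_F}_R$ is, by~\eqref{e.xxx}, literally $\sum_l\norm{\partial f/\partial z_l}_R$; no re-decomposition of $\partial f/\partial z_l$ into left-aligned pieces is required. The extra factors in the paper's statement come instead from a cruder way of counting: rather than using your exact identity $\sum_l\norm{\partial f^{(m)}/\partial z_l}_R=(r/R)\norm{f^{(m)}}_R$, the paper bounds each single derivative by $\norm{\partial f^{(m)}/\partial x_l}_R\le rR^{r-1}\norm{f^{(m)}}_1$, then uses that $f^{(m)}$ depends only on the $2(m+1)$ variables $x_0,\dots,x_m,y_0,\dots,y_m$ to get a sum of $2(m+1)$ identical bounds. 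This produces the extra factor $\sum_{m\ge 0}(m+1)e^{-\sigma m}$, estimated (via an integral bound) by $2/(1-e^{-\sigma})^2$, and together with the factor $2$ from $x$ and $y$ this accounts for the $4/(1-e^{-\sigma})^2$. So the paper's approach loses a factor of roughly $(m+1)$ per level and then re-sums it, whereas your approach never loses it; both are valid, yours is simply tighter.
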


%
%

\section{Results}
\label{s:results}

In this section we present the extensive resonant normal form Theorem
for the Hamiltonian~\eqref{e.H}; in the subsequent
Section~\ref{s:application} we will add some preliminary
applications\footnote{In a forthcoming paper~\cite{PP14b} we will
  exploit further the present theorem for some Breathers stability
  result.} of such a result, exhibiting a generalized dNLS as a first
order normal form of~\eqref{e.H}.

In order to present the result we split the Hamiltonian~\eqref{e.H} as
a sum of its quadratic and quartic parts $H=H_0+H_1$, where
\begin{equation}
\label{e.H.dec}
 H_0(x,y) := \frac12\sum_{j=1}^N \quadr{y^2_j + x^2_j +
   a(x_j-x_{j-1})^2} \ , \qquad H_1(x,y) := \frac14\sum_{j=1}^N x_j^4
 \ .
\end{equation}

\subsection{Normalization of the quadratic part}
\label{ss:norm.quadr}

The first step is the application of the same initial linear
transformation used in~\cite{GioPP12,GioPP13} to give the quadratic
part a resonant normal form. This is a preliminary operation which is
absolutely necessary in order to ``prepare'' the Hamiltonian $H$ for
the forthcoming perturbation algorithm. As widely discussed in the
above cited papers, such a normalization can be implemented using
different approaches.  We recall here a simplified statement of the
  corresponding one of~\cite{GioPP13}. Let us recall the matrix $A$
  introduced in~\eqref{e.H0}
\begin{equation}
\label{e.def-A}
A= (1+2a)\quadr{\Id - {\mu}(\tau + \tau^{\top})} \ ,
\qquad\text{with}\qquad
{\mu}:=\frac{a}{1+2a} \ ,
\end{equation}
which is clearly circulant and symmetric (recall $\tau$ is the
permutation matrix generating~\eqref{e.perm}), and gives a finite
range interaction, in the form of a $\mu$ small\footnote{$\mu$ is
  essentially proportional to the natural small coupling $a$, and is
  always less than one half since we consider positive values of $a$.}
perturbation of the identity. We also introduce the constant frequency
$\Omega$ as the average of the square roots of the eigenvalues of $A$
(actually, the frequencies of the linearized oscillations). Let us
introduce the exponent
\begin{equation}
\label{e.sigma.0}
\sigma_0 := -\ln(2\mu)\ ,
\end{equation}
and take any positive $\sigma_1<\sigma_0$. We have

\begin{proposition}[see \cite{GioPP13}]
\label{p.1}
For ${\mu}<1/2$, the canonical linear transformation $q=A^{1/4} x$,
$p=A^{-1/4}y$ gives the Hamiltonian $H_0$ the particular resonant
normal form
\begin{equation}
\label{e.dec.H0}
 H_0 = H_\Omega + Z_0 \ ,
\qquad
 \Poi{H_\Omega}{Z_0}=0
\end{equation}
with $H_\Omega$ and $Z_0$ cyclically symmetric with seeds
\begin{equation*}
 h_\Omega = \frac\Omega2(q_1^2+p_1^2) \ ,
\qquad
 \zeta_0\in\Dscr\bigl(C_{\zeta_0}(a),\sigma_0\bigr) \ ,
\end{equation*}
and transform $H_1$ into a cyclically symmetric function with seed
\begin{equation*}
h_1\in\Dscr\bigl(C_{h_1}(a),\sigma_1\bigr) \ .
\end{equation*}
\end{proposition}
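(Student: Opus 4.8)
The plan is to diagonalize the circulant matrix $A$ by the discrete Fourier transform, carry out the canonical change $q=A^{1/4}x$, $p=A^{-1/4}y$ explicitly, and then track how the finite-range structure of $A$ propagates (with exponential decay) into $A^{1/4}$ and $A^{-1/4}$, and hence into the seeds of the transformed $H_0$ and $H_1$.

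First I would record that, since $A=(1+2a)[\Id-\mu(\tau+\tau^\top)]$ is circulant and symmetric with $\mu<1/2$, all its eigenvalues are positive, so any real power $A^s$ is well defined, circulant and symmetric (property 4 of the listed properties of circulant matrices). In particular $A^{1/4}$ and $A^{-1/4}$ are circulant, so the linear map $(x,y)\mapsto(q,p)=(A^{1/4}x,A^{-1/4}y)$ is canonical (it has the block form $\mathrm{diag}(A^{1/4},A^{-1/4})$, symplectic because the two blocks are mutual inverse-transposes) and, being built from circulant matrices, it commutes with $\tau$, hence preserves cyclic symmetry. Under this map
\begin{displaymath}
H_0(x,y)=\tfrac12 y\cdot y+\tfrac12 Ax\cdot x \;\longmapsto\; \tfrac12 p\cdot (A^{1/2}p)+\tfrac12 q\cdot(A^{1/2}q)=\tfrac12 A^{1/2}(q,q)+\tfrac12 A^{1/2}(p,p),
\end{displaymath}
i.e. a quadratic form with the single circulant, symmetric matrix $B:=A^{1/2}$ acting identically on the $q$ and on the $p$ block. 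Writing $B=\Omega\,\Id+(B-\Omega\,\Id)$ with $\Omega$ the average of the eigenvalues of $B$ (= average of the frequencies), the $\Omega\,\Id$ part gives exactly $H_\Omega$, a chain of $N$ identical harmonic oscillators with seed $h_\Omega=\tfrac{\Omega}{2}(q_1^2+p_1^2)$; the traceless remainder gives $Z_0=\tfrac12(B-\Omega\Id)(q,q)+\tfrac12(B-\Omega\Id)(p,p)$. Because $H_\Omega$ is, up to the constant $\Omega$, the $\ell^2$ action and the quadratic form $Z_0$ is simultaneously diagonalized with it in Fourier variables, $\{H_\Omega,Z_0\}=0$ is immediate; alternatively one checks it directly in the original basis using that $B$ commutes with the standard symplectic rotation generated by $\sum_j(q_j^2+p_j^2)$.

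The substantive point — and the main obstacle — is the decay estimate, i.e. showing $\zeta_0$, a seed for $Z_0$, lies in $\Dscr(C_{\zeta_0}(a),\sigma_0)$ and the seed $h_1$ of the transformed quartic term lies in $\Dscr(C_{h_1}(a),\sigma_1)$. The key is a quantitative bound on the entries of $A^{s}$ for $s=\pm 1/4$ (and $s=1/2$). Since $\mu<1/2$ one has $2\mu<1$, and writing $A^{s}=(1+2a)^{s}[\Id-\mu(\tau+\tau^\top)]^{s}$ one expands $[\Id-\mu(\tau+\tau^\top)]^{s}$ in a convergent binomial/Neumann series in the operator $\mu(\tau+\tau^\top)$; the coefficient of $\tau^{k}$ (i.e. the entry at lattice distance $|k|$) is a sum of binomial-type coefficients times $\mu^{|k|+2\ell}$ over $\ell\ge0$, hence bounded by $\mathrm{const}(s)\,\mu^{|k|}\,(1-\mu^2)^{-|s|-1}$ or similar, giving an exponential decay $e^{-|k|\ln(1/\mu)}$, and in fact $e^{-|k|\sigma_0}$ with $\sigma_0=-\ln(2\mu)$ once the extra factor $2^{|k|}$ from the multinomial count of the two generators $\tau,\tau^\top$ is absorbed (this is precisely why $\sigma_0=-\ln(2\mu)$ rather than $-\ln\mu$). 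This is the place where a careful bookkeeping of the combinatorial factors is required; I would isolate it as a lemma: "if $M$ is circulant with first-row entries decaying like $e^{-\sigma|k|}$ then so is any real power of $M$, with a possibly smaller rate and a controlled constant," and the analogous statement for products. Granting that, the finite-range quadratic form $Z_0$ becomes an all-to-all quadratic form whose coefficient at distance $m$ is $O(e^{-\sigma_0 m})$, so a left-aligned seed built from the first row of $B-\Omega\Id$ satisfies $\|\zeta_0^{(m)}\|_1\le C_{\zeta_0}(a)e^{-\sigma_0 m}$; here $C_{\zeta_0}(a)\to 0$ as $a\to0$ since $\mu\to0$, and in fact $\zeta_0^{(0)}=0$ because the diagonal part has been subtracted off into $H_\Omega$.

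Finally, for the quartic term: $H_1=\tfrac14\sum_j x_j^4$ has seed $\tfrac14 x_1^4$, and after the substitution $x_j=(A^{1/4}q)_j=\sum_k (A^{1/4})_{jk}q_k$ the monomial $x_1^4$ expands as $\sum_{k_1,k_2,k_3,k_4}\prod_i(A^{1/4})_{1,k_i}\,q_{k_1}q_{k_2}q_{k_3}q_{k_4}$, whose coefficient at a given interaction distance $m$ is controlled by the convolution of four copies of the (exponentially decaying) first row of $A^{1/4}$; this convolution decays at essentially the same rate, and one picks any $\sigma_1<\sigma_0$ to absorb the polynomial-in-$m$ losses coming from the convolution, obtaining a left-aligned seed $h_1\in\Dscr(C_{h_1}(a),\sigma_1)$ with $C_{h_1}(a)$ finite (and bounded as $a\to 0$, since $A^{1/4}\to\Id$ there). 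The routine part is assembling the constants; the only genuinely delicate estimate is the one on powers of the circulant $\Id-\mu(\tau+\tau^\top)$ and the identification of the optimal rate $\sigma_0=-\ln(2\mu)$.
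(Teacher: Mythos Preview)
Your approach is correct and is the natural one; note, however, that the present paper does not actually prove this proposition---the statement carries the attribution ``see \cite{GioPP13}'' and the argument is deferred entirely to that reference. Your sketch (canonicity and cyclic symmetry of the map via the circulant structure of $A^{\pm 1/4}$; the splitting $A^{1/2}=\Omega\,\Id+(A^{1/2}-\Omega\,\Id)$ with $\Omega$ the average eigenvalue, which for a circulant matrix coincides with the common diagonal entry so that $\zeta_0^{(0)}=0$; Poisson commutation from the symmetry of $A^{1/2}$; and the decay $(A^{s})_{0,m}=O((2\mu)^{|m|})$ via the binomial expansion of $[\Id-\mu(\tau+\tau^\top)]^{s}$, with the factor $2$ coming from the central binomial counts in $(\tau+\tau^\top)^n$) is exactly the route taken in the cited paper.

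One small slip: in the quartic substitution you wrote $x_j=(A^{1/4}q)_j$, but since $q=A^{1/4}x$ it should be $x_j=(A^{-1/4}q)_j$; this is harmless for the argument since $A^{-1/4}$ has the same circulant structure and exponential decay as $A^{1/4}$.
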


We stress that it is the above linear transformation which introduces
in a natural way, both in $Z_0$ and in $H_1$, the interaction among
all sites, with an exponential decay with respect to their distance.
Differently from the quadratic interaction $Z_0$, the seed $h_1$
cannot\footnote{We mention here that this loss of the exponential
  decay is a consequence of the requirement that the seed $h_1$ has to
  be left aligned. Indeed it is actually possible to keep
  $h_1\in\Dscr(\cdot,\sigma_0)$, but with a different expansion of
  $h_1=\sum_l h_1^{(l)}$: namely if the support $S(h_1^{(l)})$ is not
  left aligned but ``symmetrically aligned'' around the $0$-th site
  (see also Section \ref{s:application}).} preserve the same
exponential decay rate of the linear transformation; however, as
claimed in the above Proposition, it is possible to show that
$h_1\in\Dscr\bigl(C_{h_1}(a),\sigma_1\bigr)$ for any
$\sigma_1<\sigma_0$. We here make the choice
\begin{equation}
\label{e.sigma.1}
\sigma_1 := \frac12\sigma_0\ ,
\end{equation}
in order to explicitly relate $\sigma_1$ to the small natural
parameter $a$ of the model.

\subsection{Normal Form Theorem}
\label{ss:norm.alg}

With the Hamiltonian transformed by means of the above Proposition
into the form
\begin{equation}
\label{e.H.lintrs}
H = H_\Omega + Z_0 + H_1\ ,
\end{equation}
we are now ready to state the main Theorem. We only anticipate that
the idea is to perform, by using the Lie transform algorithm in the
form explained in \cite{Gio03}, $r$ normalizing steps, provided
$r<r_*(\mu)$. As expected, the maximum number $r_*(\mu)$ of steps
allowed increases when $\mu$ decreases. Moreover, given $\mu$ and $r$,
the normalizing canonical transformation is well defined in a (small)
neighborhood $B_R$ of the origin, where $R<R_*(r,\mu)$. Although this
canonical transformation preserves the extensive nature of the system,
at any step one has to lose a bit of the exponential decay of the
interactions involved in the Hamiltonian.

\begin{theorem}
\label{prop.gen}
Consider the Hamiltonian $H=h^{\oplus}_{\Omega}+\zeta^{\oplus}_0 +
h^{\oplus}_1$ with seeds $h_{\Omega}=\frac{\Omega}{2}(x_0^2+y_0^2)$,
the quadratic term $\zeta_0$ of class $\Dscr(C_{\zeta_0},\sigma_0)$ with
$\zeta_0^{(0)}=0$, and the quartic term $h_1$ of class
$\Dscr(C_{h_1},\sigma_1\,)$. Pick a positive
$\sigma_*\in[\max(\ln(4),\sigma_0/4),\sigma_1)$; then there exist
positive $\gamma$, $\mu_{*}$ and $C_*$ such that for any positive
integer $r$ satisfying
\begin{equation}
\label{e.muperr}
r\lt\frac12\tond{\frac{\mu_*}{\mu}}\ ,
\end{equation}
there exists a finite generating sequence
$\Chi=\{\chi^{\oplus}_1,\ldots,\chi^{\oplus}_r\}$ of a Lie transform
such that $T_{\Chi}\Ham{r}{} = H$ where $\Ham{r}{}$ is a cyclically
symmetric function of the form
\begin{equation}
\label{e.Ham.r}
\Ham{r}{} = H_\Omega + \Z + {P^{(r+1)}}\ ,
\qquad\qquad
\begin{aligned}
\Z :&= Z_0 + \dots + Z_r
\\
\lie{\Omega}Z_s&=0 \ , \quad \forall s\in\{0,\ldots,r\} \ ,
\end{aligned}
\end{equation}
with $Z_s$ of degree $2s+2$ and $P^{(r+1)}$ a remainder starting with
terms of degree equal or bigger than $2r+4$.

Moreover, defining $C_r := 64r^2C_*$ and $\sigma_j := \sigma_1
-\frac{j-1}{r}(\sigma_1-\sigma_*)$, the following statements hold true:
\begin{enumerate}[label=(\roman{*}), ref=(\roman{*})]

\item the seed $\chiph_s$ of $\Chi_s$ is of class
$\Dscr(C_r^{s-1} \frac{C_{h_1}}{\gamma s}, \sigma_s)$.

\item the seed $\zeta_s$ of $Z_s$ is of class
$\Dscr(C_r^{s-1}\frac{C_{h_1}}{s}, \sigma_s)$.

\item with the choice $\sigma_* = \sigma_0/4$, if the
  smallness condition on the energy\footnote{Since $R$ is the radius
    of the ball around the origin considered, the smallness condition
    is in total or specific energy depending on the phase space
    considered, i.e. respectively $\Ph_2$ or $\Ph_\infty$.}
\begin{equation}
\label{e.R.sm1}
R^2<R_*^2:= \frac2{3(1+e)C_r}\ ,
\end{equation}
is satisfied, then the generating sequence $\Chi$ defines an analytic canonical
transformation on the domain $B_{\frac23 R}$ with the properties
\begin{displaymath}
B_{R/3}\subset T_\Chi B_{\frac23 R} \subset B_R\qquad\qquad B_{R/3}\subset
T_\Chi^{-1} B_{\frac23 R} \subset B_R\ .
\end{displaymath}
Moreover, the deformation of the domain $B_{\frac23 R}$ is controlled
by
\begin{equation}
\label{e.def.Tchi}
z\in B_{\frac23 R}\qquad\Rightarrow\qquad \norm{T_\Chi(z)-z}\leq 4^4
C_* R^3\ ,\qquad \norm{T^{-1}_\Chi(z)-z}\leq 4^4
C_* R^3\ .
\end{equation}

\item with the choice $\sigma_*=\sigma_0/4$, if \eqref{e.R.sm1} is
  satisfied, then the remainder is an analytic function on $B_{\frac23
    R}$, and it is represented by a series of cyclically symmetric
  homogeneous polynomials $\Ham{r}{s}$ of degree $2s+2$
\begin{equation}
\label{e.rem.r}
P^{(r+1)} = \sum_{s\geq r+1}\Ham{r}{s}\qquad \Ham{r}{s}
= \tond{h^{(r)}_s}^{\oplus}\ ,
\end{equation}
and the seeds $h^{(r)}_s$ are of class $\Dscr(2\tilde
C_r^{s-1}C_{h_1},\sigma_*)$ with $\tilde C_r = 96 r^2 C_*$.

\end{enumerate}
\end{theorem}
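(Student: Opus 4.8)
The plan is to run the Lie-transform normalization algorithm of~\cite{Gio03}, but carried out entirely at the level of seeds rather than full cyclically symmetric functions, so that all the structural properties (cyclic symmetry, left alignment, exponential decay class $\Dscr$) and all the quantitative estimates are traced step by step. First I would set up the recursive homological scheme: writing $\Ham{r}{} = H_\Omega + \Z + P^{(r+1)}$ and imposing $T_\Chi \Ham{r}{} = H$, one obtains at each order $s$ an equation of the form $\lie{\Omega}\chi_s = $ (known terms built from $h_1$, $\zeta_0$ and the previously determined $\chi_1,\dots,\chi_{s-1}$), which is solved by inverting $\lie{\Omega}$ on the range complementary to its kernel; the kernel part becomes $\zeta_s$. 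The key point, using property~4 of the first Lemma (seed of a Poisson bracket of cyclically symmetric functions), is that every Poisson bracket appearing in the recursion can be computed at the seed level: $\Poi{f^\oplus}{g^\oplus}$ has seed $\Poi{f}{g^\oplus}$, and by left-aligning one keeps everything in the $\Dscr$ classes. Since $h_\Omega$ depends only on the single site $0$, the operator $\lie{\Omega}$ acts site-diagonally and its inversion does not spread the support: this is what lets the interaction-distance bookkeeping go through.

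Next I would establish the decay-rate loss. At each of the $r$ steps a bracket of two seeds of classes $\Dscr(\cdot,\sigma)$ and $\Dscr(\cdot,\sigma')$ produces, after left alignment, a seed of class $\Dscr(\cdot,\min(\sigma,\sigma')-O(1))$ — more precisely, a bracket can increase the interaction distance, so combining the two supports costs a fixed fraction of the decay budget. Distributing the total available loss $\sigma_1 - \sigma_*$ evenly over the $r$ steps gives the linear interpolation $\sigma_j = \sigma_1 - \frac{j-1}{r}(\sigma_1-\sigma_*)$ announced in the statement, and the constraint $\sigma_* \ge \max(\ln 4,\sigma_0/4)$ together with $\sigma_* < \sigma_1 = \sigma_0/2$ guarantees each step retains a strictly positive rate (the $\ln 4$ is needed to absorb the geometric-series and combinatorial factors when passing from $\norm{f^{(m)}}_1$ bounds back to norms of products). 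In parallel I would track the constants: a bracket of seeds with constants $C_a, C_b$ yields a constant bounded by (degree factor)$\cdot C_a C_b / (1-e^{-\sigma_*})^{\text{something}}$ times the operator norm of $\lie{\Omega}^{-1}$; iterating the recursion and bounding the number of terms at order $s$ by a Catalan-type count, one gets the geometric growth $C_r^{s-1}$ with $C_r = 64 r^2 C_*$ for $\zeta_s$ and $\chi_s$ (the extra $1/s$ and $1/(\gamma s)$ come from the explicit homogeneity-degree denominators in the Lie recursion and the small divisor $\gamma$ from inverting $\lie{\Omega}$), and $\tilde C_r = 96 r^2 C_*$ for the remainder, where the larger constant reflects the extra terms accumulated in $P^{(r+1)}$ from all orders $> r$.

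Then I would address convergence of the remainder series and the analyticity/domain statements (items (iii)–(iv)). Summing $P^{(r+1)} = \sum_{s\ge r+1}\Ham{r}{s}$ with $\norm{\tond{h_s^{(r)}}^\oplus}^\oplus \le 2\tilde C_r^{s-1}C_{h_1}$ and using that $\Ham{r}{s}$ has degree $2s+2$, the polynomial norm at radius $R$ contributes a factor $R^{2s+2}$, so the series converges geometrically provided $\tilde C_r R^2 < 1$, which is (a stronger form of, matching the explicit $\frac{2}{3(1+e)C_r}$) the smallness condition~\eqref{e.R.sm1}; here one also uses Lemma~\ref{lem.Hamfield} to pass from seed norms to vector-field norms and Proposition~\ref{p.field} to get the operator-norm control needed for the time-one flow estimates. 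The deformation bound~\eqref{e.def.Tchi} follows by estimating $\norm{T_\Chi(z)-z} \le \sum_s \norm{X_{\chi_s^\oplus}(z)}$ along the flow, using item~(i) for the $\chi_s$ constants, Lemma~\ref{lem.Hamfield} for each vector field, and the geometric summation under~\eqref{e.R.sm1}; the inclusions $B_{R/3}\subset T_\Chi B_{2R/3}\subset B_R$ are then a standard consequence of $\norm{T_\Chi(z)-z}\le 4^4 C_* R^3$ being smaller than $R/3$ on $B_{2R/3}$, which holds exactly because $R^2 < R_*^2$.

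The main obstacle I expect is the simultaneous control of the interaction-distance loss and the constant growth through the nonlinear recursion: each Poisson bracket both multiplies constants and can increase interaction distance, and one must show that choosing the step-wise rate loss as $(\sigma_1-\sigma_*)/r$ is exactly enough — i.e. that the geometric-series resummations $\sum_m e^{-\sigma_j m}$ needed to collapse products of left-aligned seeds back into a single $\Dscr(\cdot,\sigma_{j+1})$ class stay bounded by a constant independent of $N$ and fold cleanly into the $64 r^2$ and $96 r^2$ prefactors. This is the step where the $N$-independence is won or lost, and where the precise numerical constants in the statement are pinned down; everything else (the formal homological scheme, the polynomial-norm estimates, the flow estimates) is comparatively routine once the seed-level bracket lemma and Lemma~\ref{lem.Hamfield} are in hand.
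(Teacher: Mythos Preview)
Your overall architecture (seed-level recursion, stepwise loss of decay rate along the sequence $\sigma_j$, geometric control of constants, flow estimates via Lemma~\ref{lem.Hamfield} and Proposition~\ref{p.field}) matches the paper. But there is a genuine gap in the homological step, and it is exactly the place where the central smallness condition~\eqref{e.muperr} comes from.

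You write the homological equation as $\lie{\Omega}\chi_s = (\text{known terms built from }h_1,\zeta_0,\chi_1,\dots,\chi_{s-1})$ and propose to invert $\lie{\Omega}$ on its range. That is not what happens: the equation one actually has to solve is $\lie{H_0}\Chi_s = Z_s + \Psi_s$ with $H_0=H_\Omega+Z_0$, so $\zeta_0$ is \emph{not} on the right-hand side as a known term but sits inside the operator to be inverted. Since $\lie{H_0}=\lie{\Omega}(\Id+K)$ with $K=\lie{\Omega}^{-1}\lie{Z_0}$, the inversion is done by a Neumann series $(\Id+K)^{-1}=\sum_{l\ge0}(-1)^lK^l$; convergence requires $\|K\|_{\rm op}<1$, and to keep the effective small divisor $\gamma=2\Omega(1-rC_K)$ bounded away from zero uniformly over all $r$ steps one needs the stronger bound $C_K\le 1/(2r)$. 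Because $C_K=\mu/\mu_*$, this is precisely the condition $r<\tfrac12(\mu_*/\mu)$ in~\eqref{e.muperr}. In your scheme, with $\lie{\Omega}$ alone, $\gamma$ would simply be $2\Omega$ and there would be no mechanism producing the $r$--$\mu$ constraint; treating $\zeta_0$ as a ``known'' source term instead makes the equation implicit in $\chi_s$ (the bracket $\{Z_0,\Chi_s\}$ appears), which is exactly what the Neumann series resolves. Without this ingredient you cannot derive~\eqref{e.muperr}, the stated value of $\gamma$, or the claimed class for $\chi_s$ in item~(i).
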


\subsection{Some remarks}
Some comments are in order. First and foremost we stress that our
normal form Theorem holds both in a regime of small total energy and
in a small \emph{specific energy} regime. This fact is somewhat
transparent in the Theorem's statement because the formulation is
given in terms of small neighborhoods of the origin, the radius $R$
being the small parameter: depending on the choice of the norm,
euclidean or supremum one, the control is in total, respectively
specific energy. From the technical viewpoint, this flexibility is
embedded in Proposition~\ref{p.field} which is true both in $\Ph_2$
and in $\Ph_\infty$. From the point of view of the relevance of the
result, the control with specific energy regime, joint with the
uniformity in the number of degrees of freedom, give the validity of
the normal form in the thermodynamic limit.

Clearly the validity of a normal form is only a first step: to fully
exploit it, one has to give some precise control of the dynamics to
ensure that, given suitable conditions on the initial datum, its
evolution remains within the small neighborhood of the origin where
the normal form holds. At present we are able to give such a control
only in a regime of small \emph{total energy}; indeed, in that case, a
rather easy consequence of the normal form Theorem \ref{prop.gen} is
the almost invariance of $H_\Omega$ and $\Z$:

\begin{corollary}
\label{c.Hom.K.var}
Let $z(0)\in B_{\frac19R}$. There exists a constant $C$, independent
of the main parameters $R$ and $a$, such that the approximate
integrals of motion $H_\Omega$ and $\Z$ fulfill
\begin{align*}
|H_\Omega(z(t))-H_\Omega(z(0))| &\leq \Omega R^4\ ,
\\
|\Z(z(t))-\Z(z(0))| &\leq  R^4(C_{\zeta_0} \mu + C_{h_1} R^2)\ ,
\end{align*}
for times
\begin{equation}
  \label{e.times}
  |t|\leq \frac{C (1-e^{-\sigma_*})^2}{C_{h_1}}
              \tond{R^2 C_r}^{-r}\ .
\end{equation}
\end{corollary}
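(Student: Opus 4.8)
The plan is to deduce Corollary~\ref{c.Hom.K.var} directly from the structure provided by Theorem~\ref{prop.gen}, using only elementary estimates on the flow generated by the truncated remainder. Recall that after the normalizing transformation the Hamiltonian reads $\Ham{r}{} = H_\Omega + \Z + P^{(r+1)}$ with $\lie{\Omega}Z_s=0$ for all $s$. Since $H_\Omega+\Z$ Poisson-commutes with $H_\Omega$, the time derivative of $H_\Omega$ along the full flow is $\dot H_\Omega = \{H_\Omega,P^{(r+1)}\}$, and likewise $\dot\Z = \{\Z,P^{(r+1)}\}$. The first step is therefore to bound these two Poisson brackets in a neighborhood of the origin using the class-$\Dscr$ estimates on the seeds of $P^{(r+1)}$ from item~(iv), together with the vector-field bound Lemma~\ref{lem.Hamfield} (or directly Proposition~\ref{p.field}) applied to the seeds $h^{(r)}_s$ of class $\Dscr(2\tilde C_r^{s-1}C_{h_1},\sigma_*)$.

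First I would fix $z(0)\in B_{R/9}$ and argue, by a standard continuity/bootstrap argument, that the solution $z(t)$ of the transformed Hamiltonian stays inside $B_{R/3}$ (say) for the times claimed: the vector field $X_{P^{(r+1)}}$ is $O(R^{2r+3})$ in operator norm on $B_{R}$ — this is where one sums the geometric-type series $\sum_{s\ge r+1}(\text{const})\,\tilde C_r^{s-1}C_{h_1}R^{2s+1}$, convergent precisely under the smallness condition \eqref{e.R.sm1} which makes $R^2\tilde C_r$ small — so the escape time from $B_{R/9}$ to $\partial B_{R/3}$ is at least $c\,R^{-(2r+2)}$ times a constant involving $C_{h_1}/(1-e^{-\sigma_*})^2$, matching \eqref{e.times} up to the replacement of $\tilde C_r$ by $C_r$ (both are $O(r^2 C_*)$, so this only affects the universal constant $C$). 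Within this trapping region I then integrate the bounds on $\dot H_\Omega$ and $\dot\Z$ over the interval $[0,t]$: multiplying the per-time bound, which is $O(R\cdot R^{2r+3})$ for $\dot H_\Omega$ (since $\{H_\Omega,\cdot\}$ costs essentially the $O(1)$ frequency $\Omega$ and one power of $R$ from the gradient pairing) by the time $|t|\le c\,R^{-(2r+2)}$ gives a total variation $O(\Omega R^4)$; the same computation for $\Z$, keeping track that $Z_0$ has seed of size $C_{\zeta_0}\mu$ and $Z_1,\dots,Z_r$ have seeds summing to $O(C_{h_1}R^2)$ after using $R^2 C_r$ small, gives the bound $R^4(C_{\zeta_0}\mu + C_{h_1}R^2)$.

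A second, more pedestrian point to handle carefully is that Corollary~\ref{c.Hom.K.var} is stated in the \emph{original} variables: the quantities $H_\Omega$ and $\Z$ as functions on phase space are transported by $T_\Chi$, and one must check that the initial ball $B_{R/9}$ is mapped (via \eqref{e.def.Tchi}, with deformation $\le 4^4 C_* R^3 \ll R$) well inside $B_{R/3}$, and conversely that the conclusion transported back only changes constants. Since $T_\Chi$ and $T_\Chi^{-1}$ are $O(C_* R^3)$-close to the identity and $H_\Omega,\Z$ are Lipschitz on $B_R$ with the natural polynomial Lipschitz constants, the discrepancy between $H_\Omega(z)$ and $H_\Omega(T_\Chi z)$ is itself $O(R^4)$ and thus absorbed; the same for $\Z$. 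One should also note that $\{H_\Omega,P^{(r+1)}\}$ need not vanish even at order $2r+4$ because $\lie{\Omega}$ annihilates $\Z$ only, not the remainder — so no extra gain is claimed, consistent with the stated $R^4$ (not $R^{2r+4}$) size of the drift, which simply reflects the long integration time.

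The main obstacle I anticipate is the bootstrap/trapping step: one must show that the orbit genuinely does not leave the region where the series for $P^{(r+1)}$ converges and where \eqref{e.def.Tchi} holds, and do so with constants that are explicitly independent of $N$ and of $r$ except through the stated $(R^2 C_r)^{-r}$ factor. The delicate accounting is to verify that summing the remainder's vector-field norm $\sum_{s\ge r+1}4\cdot 2^{s}\,(\text{deg})\,R^{2s+1}\tilde C_r^{s-1}C_{h_1}$ (using Lemma~\ref{lem.Hamfield}) is dominated by its first term $\propto C_{h_1}R^{2r+3}(R^2\tilde C_r)^{r}/(1-e^{-\sigma_*})^2$ under \eqref{e.R.sm1}, and that the geometric ratio is bounded by, say, $1/2$; this is where the precise numerical value in \eqref{e.R.sm1}, namely $R^2<2/(3(1+e)C_r)$, is consumed. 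Everything else — the two Poisson-bracket estimates and the final integration in time — is routine once this uniform trapping is secured.
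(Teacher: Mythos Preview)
Your proposal is essentially the paper's own argument: pass to normal-form coordinates, bound $\dot H_\Omega=\{H_\Omega,P^{(r+1)}\}$ and $\dot\Z=\{\Z,P^{(r+1)}\}$ using Lemma~\ref{lem.Hamfield} together with the class-$\Dscr$ data on the seeds $h^{(r)}_s$ from item~(iv), sum the geometric tail under~\eqref{e.R.sm1}, integrate in time, and then transport back to the original variables via the deformation estimate~\eqref{e.def.Tchi}; the $\mu$-factor in the $\Z$ bound indeed comes from $\zeta_0^{(0)}=0$.

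One point deserves to be made explicit in your trapping step. The smallness of $X_{P^{(r+1)}}$ by itself does \emph{not} bound the escape time from $B_{R/9}$ to $\partial B_{R/3}$: the full vector field also contains $X_{H_\Omega}+X_{\Z}$, which is of size $O(R)$, not $O(R^{2r+3})$. What makes the argument work (and what the paper uses) is that in $\Ph_2$ the quantity $H_\Omega$ \emph{is} (up to the factor $\Omega/2$) the squared euclidean norm, and $\{H_\Omega,H_\Omega+\Z\}=0$; hence your own bound $|H_\Omega(\tilde z(t))-H_\Omega(\tilde z(0))|\le\Omega R^4$ is precisely what controls $\|\tilde z(t)\|_2$ and closes the trapping self-consistently. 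In other words the ``bootstrap'' is not a separate step but the same estimate read backwards; the delicate part is not the series summation you flag but this identification. Once this is said, the rest of your outline matches the paper's proof.
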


In the time scale of the above Corollary, which is actually of the
order $(Rr)^{-2r}$, one can think of the order $r$ fixed, possibly at
its maximal value of order $1/a$ according to \eqref{e.muperr}, and
then play with the small radius $R$, also provided it is satisfied the
control $R\lesssim 1/r$ given by \eqref{e.R.sm1}.

Actually the Corollary holds because $H_\Omega$ is equivalent to the
euclidean norm, so that its conservation for long times is
self-consistent: it comes from the structure of the normal form, and
at the same time is enough to control the permanence in the right
neighborhood of the origin. Unfortunately the control of the
euclidean norm does not give a control of the sup norm.

In the small specific energy regime, in fact, we are still not able to
exclude that an initial datum with the energy spread all over the
chain could evolve into a localized state for which the sup norm would
grow in a way essentially proportional to the number of degrees of
freedom. And probably this could not be excluded at all. The results
one could hope for, and which we are working on, are the following:
either to show that such localization process takes a very long time,
or that it happens for a set of initial data of small measure (both
things asymptotically with the small parameter given by the specific
energy).
 
Another kind of comments is related to the dependence of the smallness
threshold $R_*$ (defined in \eqref{e.R.sm1}) on the two different
parameters involved in the perturbation construction: the coupling
$\mu$ and the number of iteration steps $r$. We have:
\begin{itemize}
\item at fixed $\mu$, $R_*$ is monotonically decreasing with $r$ (with
  a zero limit if one would be allowed to arbitrarily increase the
  number of steps $r$; recall~\eqref{e.muperr});
\item at fixed $r\geq 1$, $R_*$ increases when decreasing the coupling
  $\mu$ and it has an upper bound independent on $\mu$.
\end{itemize}
One has to observe that, if we remove the coupling from the very
beginning, i.e. $\mu\equiv0$, the system is trivially composed of $N$
identical anharmonic oscillators, and in such a case, it is known that
the Birkhoff normal form procedure is defined on a ball of radius
$0<R_{**}(r)<1$. Indeed our construction reduces to the standard one,
once $\mu$ is set to zero, but our $R_*(r,\mu)$ does not
converge\footnote{Actually, since we have an upper bound on the number
  of steps $r$ whenever $\mu\neq0$, we were not interested in the
  optimization of all the estimates when $r\to\infty$.} to $R_{**}(r)$
as $\mu\to0$.

As a last comment we compare the present result with those of our
previous works \cite{GioPP12,GioPP13}.  There we constructed an
(almost) conserved quantity, here we produce a normal form, which can
give, in principle, much more information about the dynamics of the
system. As a matter of fact, the application we sketch in the above
Corollary~\ref{c.Hom.K.var} resemble very closely the results of the
previous papers, with the following differences: here there's no need
to exclude a small (with the Gibbs measure) set of initial data, but
the result is valid only in total energy. In this sense it is somewhat
complementary. But the above Corollary is only one of the possible
applications once we have a normal form, which can shed more light on
the structure of the Hamiltonian of the system. In the next Section we
start to extract some information looking explicitly at the first step
normal form, which turns out to be a generalized dNLS. We defer a
deeper investigation in such a direction to forthcoming papers. We
plan to explore possible applications of such a normal form: for
example to the stability of Breathers (like in~\cite{PP14b}) and
MultiBreathers, or in order to give a justification for the otherwise
formal use of the (G)dNLS to approximate the evolution of the KG model
with small amplitude initial data. Moreover such a construction could
be extended to the case of interactions, both linear and nonlinear,
beyond the nearest neighbor: the scheme would be exactly the same, the
first step being the study of the decay properties of the linear
transformation (see Proposition~\ref{p.1}), and the second one the
control of the decay loss in the solution of the homological
equation.

%
%
%
%

\section{GdNLS model as normal form for the KG dynamics}
\label{s:application}

Once the Hamiltonian is in the form \eqref{e.H.lintrs}, hence after
the quadratic normalization, if we perform only one step of the
perturbation scheme developed in Theorem~\ref{prop.gen}, i.e. we
choose $r=1$ in \eqref{e.Ham.r}, the transformed Hamiltonian reads
\begin{equation}
  \label{e.K}
  \Ham{1}{} = K + {P^{(2)}} \ ,
\qquad\qquad 
  K := H_\Omega + Z_0 + Z_1\ ,
\qquad\qquad
  {P^{(2)}} = \sum_{s\geq 2}\Ham{1}{s} \ ,
\end{equation}
and the corresponding Hamilton equations are
\begin{equation}
\label{e.Ham.eq}
\dot z = X_K (z) + X_{P^{(2)}}(z)\ .
\end{equation}

\subsection{The Generalized discrete Non Linear Schroedinger
  equation}
\label{ss:GdNLS}

In this part we want to stress and comment the fact that the
simplified Hamiltonian $K$ looks naturally as the Hamiltonian of a
Generalized discrete Non Linear Schroedinger equation (GdNLS). With
the term \emph{generalized} we mean that it includes interactions
among sites which are also beyond the nearest-neighbors, both in the
linear ($Z_0$) and in the nonlinear ($Z_1$) term.

We have indeed, by the normal form construction, the usual additional
conserved quantity given by the $\ell^2$ norm $H_\Omega$
\begin{equation}
  \label{e.gdnls}
  K = H_\Omega + \Z \ ,
  \qquad\qquad
  \Z = Z_0 + Z_1 \ ,
  \qquad\qquad
  \Poi{H_\Omega}{\Z}=0 \ .
\end{equation}
Moreover, due to the decay property of the coefficients of such
interactions, the Hamiltonian $K$ turns out to be a perturbation of
the dNLS model (here presented in real coordinates,
see~\eqref{4.effDNLS} for the standard one in complex coordinates)
\begin{equation}
\label{e.H.dnls}
H_{dNLS} = \sum_{j}\quadr{\frac\Omega2(q_j^2+p_j^2) +
\mu \frac12(q_jq_{j+1}+p_jp_{j+1}) + \frac32 (q_j^2+p_j^2)^2}\ ,
\end{equation}
which is known to be a leading order normal form of the KG
Hamiltonian, when the amplitude is taken proportional to $\sqrt{\mu}$,
which means in the regime $E\sim \mu$, ad discussed for example in the
introduction of \cite{PelS12} (see also~\cite{DauDP97,ClaKKS93} for
other examples of the use of the dNLS in the study of a KG model).

Indeed, both the seeds $\zeta_0$ and $\zeta_1$ of the quadratic and
quartic terms $Z_0$ and $Z_1$ include interactions which are
exponentially small with the distance among sites, with the following
expansions:
\begin{equation}
\label{e.z0z1.shape}
\zeta_0 = \sum_{m=1}^{[N/2]}\zeta_0^{(m)} \ ,
\qquad\qquad
\zeta_1 = \sum_{m=0} ^{[N/2]}\zeta_1^{(m)} \ ,
\end{equation}
with supports for the components $\zeta_j^{(m)}$
\begin{equation*}
S(\zeta_j^{(m)})\subset [0,\ldots,m]\cup[N-m,\ldots,N] \ .
\end{equation*}
For the quadratic part we have an explicit expression:
\begin{equation}
\label{e.Z0.shape}
\zeta_0^{(m)} = b_m\quadr{q_0(q_m+q_{N-m})+p_0(p_m+p_{N-m})}\ ,
\qquad\qquad
|b_m| = \mathcal{O}\tond{e^{-\sigma_0 m}} \ ,
\end{equation}
while for the quartic one we present here only a control of the norm
of the components
\begin{equation*}
\norm{\zeta_1^{(m)}}\leq C_{h_1}' e^{-\sigma_0 m} = C_{h_1}' (2\mu)^m \ .
\end{equation*}
The effective computations of the monomials included in all the
$\zeta_1^{(m)}$ is indeed a doable task, at least if supported by an
algebraic manipulator program. We nevertheless defer such a task to
future developments whenever it will be a necessary step.

With respect to the small parameter $\mu$, the leading terms of
$\zeta_0$ and of $\zeta_1$ are respectively the (resonant)
nearest-neighbors interaction of the dNLS model and its nonlinear part
(see, for comparison, formula~\eqref{e.H.dnls}, recalling that $b_1$
is $\mathcal{O}(\mu)$)
\begin{displaymath}
\zeta_0 = b_1\quadr{q_0(q_1+q_{N-1})+p_0(p_1+p_{N-1})} +
     {\mathcal{O}(\mu^2)} \qquad\qquad \zeta_1 =
     \frac32(q_0^2+p_0^2)^2 + \mathcal{O}(\mu) \ .
\end{displaymath}

Concerning $\zeta_1$ we remark that, with respect to the expansion
used in the normal form construction, we here\footnote{The same
  approach could be extended elsewhere but it is beyond the purposes
  of the present work.} exploit the previously mentioned idea of
taking the support $S(\zeta_1^{(m)})$ symmetrically centered around
the $0$-th variable. This provides the decay rate $\sigma_0$, which
represents a stronger condition than
$\zeta_1\in\Dscr(C_{h_1},\sigma_1)$, with $\sigma_1<\sigma_0$, claimed
in Proposition~\ref{p.1}. This different expansion is a
straightforward consequence of the following Lemma, whose proof is
deferred to the Appendix
\begin{lemma}
\label{l.H1.sym.exp}
It is possible to select a seed $h_1$ such that 
\begin{equation}
\label{e.h1.sym.seed}
h_1 = \sum_{m=0} ^{[N/2]}h_1^{(m)}\ ,\qquad S\tond{h_1^{(m)}} \subset
[0,\ldots,m]\cup[N-m,\ldots,N]\ .
\end{equation}
Moreover, there exists $C_{h_1}'$ such that
\begin{displaymath}
\norm{h_1^{(m)}}\leq C_{h_1}' e^{-m\sigma_0}\ .
\end{displaymath}
\end{lemma}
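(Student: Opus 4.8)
\textbf{Proof plan for Lemma~\ref{l.H1.sym.exp}.}
The starting point is the explicit form of the quartic term after the linear transformation of Proposition~\ref{p.1}: since $q = A^{1/4}x$, one has $x = A^{-1/4}q$, and $H_1 = \frac14\sum_j x_j^4$ becomes a quartic form in $q$ whose coefficients are products of four entries of the circulant matrix $A^{-1/4}$. By property~4 of the list on circulant matrices (Section~\ref{s:extensivity}), $A^{-1/4}$ is itself circulant and symmetric, so its entries $(A^{-1/4})_{j,k} = c_{|k-j|}$ depend only on the (cyclic) distance, and the decay estimate $|c_m| = \mathcal{O}(e^{-\sigma_0 m})$ is exactly the content already used in Proposition~\ref{p.1} (it is where $\sigma_0 = -\ln(2\mu)$ comes from). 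First I would write $H_1 = \sum_j \bigl(\sum_{k} c_{|k-j|} q_k\bigr)^4$ and expand, obtaining a sum over the lattice of monomials $q_{k_1}q_{k_2}q_{k_3}q_{k_4}$; by translation invariance the whole thing is cyclically symmetric, so it has a seed, and the natural candidate is the $j=0$ contribution, $h_1 = \bigl(\sum_k c_{|k|} q_k\bigr)^4$ (reading indices in the symmetric representatives $\{-[N/2],\dots,[N/2]\}$ rather than $\{0,\dots,N-1\}$).

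The key structural point — and the reason for the \emph{symmetrically centered} expansion rather than the left-aligned one — is how to group the monomials of this seed into the pieces $h_1^{(m)}$. I would define $h_1^{(m)}$ to collect exactly those monomials $q_{k_1}q_{k_2}q_{k_3}q_{k_4}$ for which $\max_i |k_i| = m$ (with indices taken in $\{-[N/2],\dots,[N/2]\}$). Then by construction $S(h_1^{(m)})\subset\{-m,\dots,m\}$, which in the periodic labelling is $[0,\dots,m]\cup[N-m,\dots,N]$, giving the support claim~\eqref{e.h1.sym.seed}. The point is that centering the window at $0$ rather than left-aligning it means that a monomial whose farthest site is at distance $m$ from the origin contributes at level $m$, and at least one of its four factors carries a coefficient $c_{k_i}$ with $|k_i| = m$, hence a factor $e^{-\sigma_0 m}$; left-aligning would instead force the ``width'' of the support to be the relevant quantity and one would only extract a weaker rate.

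For the norm estimate I would bound $\norm{h_1^{(m)}}$ by summing $R^4|c_{k_1}c_{k_2}c_{k_3}c_{k_4}|$ over all quadruples with $\max_i|k_i|=m$. Factor out the extremal index: there is at least one $i$ with $|k_i| = m$, contributing $|c_m|\le \text{const}\cdot e^{-\sigma_0 m}$, while the remaining three indices range freely over $\{-m,\dots,m\}$ and the sum $\sum_{|k|\le m}|c_k|$ is bounded by $\sum_{k\in\ZZ}|c_k| =: \tilde C$, a constant independent of $N$ (it converges because $|c_k|$ decays like $e^{-\sigma_0|k|}$, cf. the Remark after the definition of class $\Dscr$). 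The combinatorial factor for ``which of the four indices is extremal'' is at most $4$, so $\norm{h_1^{(m)}} \le 4\,R^4\,\text{const}\,\tilde C^3\, e^{-\sigma_0 m} =: C_{h_1}' e^{-\sigma_0 m}$, with $C_{h_1}'$ independent of $N$, as claimed. (If one is careful about double counting when several $|k_i|$ equal $m$, an inclusion–exclusion over which index realizes the maximum is harmless since it only improves constants.)

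The main obstacle is not any single estimate but the bookkeeping of the periodic indexing: one must consistently pass between the representatives $\{0,\dots,N-1\}$ used in the general definitions of support and interaction distance and the symmetric representatives $\{-[N/2],\dots,[N/2]\}$ natural here, and check that the partition $\{h_1^{(m)}\}_{m=0}^{[N/2]}$ is genuinely a partition of the monomials of $h_1$ (every monomial lands in exactly one level, and there is no overlap or omission once $N$ is large enough that the windows $\{-m,\dots,m\}$ are honest subsets of the cycle). Once this is set up, the decay estimate is the direct consequence of pulling out one extremal coefficient $c_m$ described above, and the only input beyond elementary algebra is the circulant structure of $A^{-1/4}$ together with the already-established exponential decay of its entries. $\square$
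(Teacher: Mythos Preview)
Your proposal is correct and follows essentially the same approach as the paper: both take the seed $h_1=\tfrac14 x_0^4$ with $x_0=\sum_m a_m(q_m+q_{N-m})$, group the resulting quartic monomials by the maximal distance $m=\max_i|k_i|$ from site $0$, and extract one factor $|a_m|=\mathcal{O}(e^{-\sigma_0 m})$ while summing the remaining coefficients. The paper merely organizes the same computation via the binomial expansion of $(\A+\B)^4$ with $\A=a_0q_0$ and $\B=x_0-\A$, treating each term $\A^{4-j}\B^j$ separately, whereas you go directly to the grouping by $\max_i|k_i|$; the substance is identical.
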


\subsection{Further comments on the construction of the normal form
  and on its relationship with the GdNLS model.}
\label{ss:comments}

Here we aim at giving some comments on our results based on the
remarks that we have two natural small parameters in our model,
i.e. the coupling $a$ and the energy $E$.

In the framework of a perturbation construction, at least at the
formal level, the presence of a small parameter is usually exploited,
by an expansion in its powers, to give a natural ordering of the terms
which are dealt with at every step of the iterative procedure. If two
small parameters are involved, we face a problem of gradation, which
clearly comes from the lack of a natural ordering in $\ZZ^2$. This is
usually dealt with by choosing in advance a particular relation
between the two parameters, i.e. by fixing a particular ``regime'',
thus effectively reducing the model to a system with a single
perturbation parameter. For example, as already remarked, it is well
known that by setting $E\sim a$ the standard dNLS arises as the first
order normal form for the KG model \eqref{e.H} (see \cite{PelS12}).
In such a case, it is ensured that, at least formally, at every step
of the normal form construction, the remainder is smaller than the
normal form terms.

Some comments are in order. The first is that for every different
regime one aims to consider, a different normal form arises, in
particular if higher order terms are involved: thus it is necessary to
fix the ratio between the parameters in advance. Moreover, the
procedure becomes rapidly quite cumbersome at the level of the
selection of the terms which have to be dealt with at every step. An
example is given in the next subsection~\ref{a.classic}, where the KG
model is dealt with as described above, and with the choice of
$a\lesssim E$, the dNLS appears after two steps.

In presence of more than one small parameter, even if one aims at
producing a unique normal form to be used in different regimes of the
parameters, the ordering used to treat the terms during the procedure
must be chosen a priori, using some suitable criterion. In the present
work, we deal with all orders in $a$ -- actually in a single step --
for every fixed order in $E$. Indeed, every term $Z_j$
in~\eqref{e.Ham.r} contains the corrections for all the powers in $a$,
and the index $j$ relates to the order in $E$. This makes sense, and
it is doable, exactly because the normalization with respect to $a$
converges. The price one has to pay is that of course every term $Z_j$
in~\eqref{e.Ham.r} keeps also contributions which are smaller than
those contained in the remainder: indeed, no matter how small $E$ is,
in every $Z_j$ we have contributions containing $a^l$ with $l$
arbitrarily large, so that $E^j a^l < E^r$. The particular regime
taken into account will determine how many terms, and which of them,
are in this situation.


\subsection{Comparison with a ``standard'' normalizing approach}
\label{a.classic}

Let us apply to the original Hamiltonian~\eqref{e.H} the scaling
$X_j = x_j/\sqrt{E}\ , Y_j = y_j/\sqrt{E}$; we obtain
\begin{equation}
  \label{4.KGresc}
  H(X,Y,a,E) = \frac{1}{E}H(x(X),y(Y)) =
  \sum_{j=0}^{N-1} \left[
    {\frac{X_j^2+Y_j^2}2} +
    a{\frac{(X_{j+1}-X_j)^2}2} +
    E{\frac{X_j^4}4}
  \right] \ ;
\end{equation}
the system actually presents two effective perturbation parameters,
which are independent: $E$ and $a$. In the limit $E\ll a< 1$ the
dynamics is essentially governed by the whole quadratic part and the
linear approximation prevails over the nonlinear effects. In the
complementary case, $a\lesssim E< 1$, the on-site nonlinear dynamics
is at least as relevant as the small coupling among nearby sites.

We are going to show that, if $a\lesssim E<1$, then a ``standard''
normalizing procedure gives the dNLS as a resonant normal form of the
KG model. Moreover, it essentially coincides\footnote{The only
  difference will be the coefficient of the next-neighbors coupling,
  since $b_1\not=a/2$, although $b_1=\mathcal{O}(a)$.}  with the
leading part of the normal form \eqref{e.H} discussed in
Section~\ref{s:application}.

\paragraph{Birkhoff complex coordinates:}

We put the Hamiltonian into Birkhoff complex coordinates $\xi_j =
(X_j+ \Im Y_j)/{\sqrt{2}}\ , \Im \eta_j = \overline{\xi}_j$, so that
\eqref{4.KGresc} reads
\begin{equation}
\label{1.KGbirkh}
H(\xi,\eta) = h_\omega(\xi,\eta) + f^{(0)}(\xi,\eta,a) +
\Ham{0}{1}(\xi,\eta,E),
\end{equation}
where the norm has been normalized to $h_\omega=1$ and the
``perturbation'' is composed of
\begin{align*}
f^{(0)}(\xi,\eta,a) &=
\frac{a}4\sum_{j=0}^{N-1}{\left(\xi_{j+1}^2+\xi_j^2-\eta_{j+1}^2-\eta_j^2
  -2\xi_{j+1}\xi_j+2\eta_{j+1}\eta_j\right)} + \\ &+
\frac{a}2\sum_{j=0}^{N-1}{\left(\xi_{j+1}-
  \xi_{j}\right)\left(\overline\xi_{j+1}-
  \overline\xi_{j}\right)},\\ \Ham{0}{1}(\xi,\eta,E) &=
\frac{E}{16}\sum_{j=1}^N{\left(\xi_{j}^4 + \eta_{j}^4 + 4
  \xi_j^2|\xi_j|^2 - 4\eta_j^2|\xi_j|^2 + 6|\xi_j|^4\right)}\ .
\end{align*}

We perform a resonant normal form construction with respect to the
resonant module
\begin{displaymath}
\mathcal{M}_{\omega}:=\{ k\in\mathbb{Z}^N | \langle k,\omega\rangle =0\} = \{
k\in\mathbb{Z}^N | \, k_1 + k_2 + \ldots + k_N=0\}.
\end{displaymath}

\paragraph{First step:} the first term $f^{(0)}$ is already
split into a $\Rscr^2$ and a $\Nscr^{ 2}$ part: hence it is
possible to define
\begin{equation}
\label{2.Z0}
Z_0^{(0)}(\xi,\eta) = \frac{a}2\sum_{j=1}^N{|\xi_{j+1}-\xi_j|^2} =
\Pi_{\Nscr^2} f^{(0)}\ ,\qquad \Poi{h_{\omega}}{Z_0^{(0)}}=0\ ,
\end{equation}
as the $a$-resonant term and remove the Range part via a generating
function $\Chi_0=\mathcal{O}(a)$ which satisfies the usual homological
equation
\begin{equation}
\label{e.om.chi0}
\Poi{\Chi_0}{h_{\omega}} = Z_0^{(0)} - f^{(0)}\ ,\qquad \Chi_0:=
\lie{h_\omega}^{-1}\quadr{f^{(0)} - Z_0^{(0)}}\ .
\end{equation}
The change of coordinates $\T_{\Chi_0}$ gives the Hamiltonian the
shape
\begin{equation}
\label{2.KG1step}
H(\xi,\eta) = \sum_{j=1}^N{|\xi_j^2|} + \frac{a}2
\sum_{j=1}^N{|\xi_{j+1}-\xi_j|^2} + \Ham{0}{1}(\xi,\eta) +
\mathrm{h.o.t.}\ ,
\end{equation}
where we have still used $\xi,\eta$ to indicate the new
coordinates. The higher order terms are
\begin{equation}
\textrm{h.o.t} = \quadr{\T_{\Chi_0}h_\omega - h_\omega -
  \lie{\Chi_0}h_\omega} + \quadr{\T_{\Chi_0}f^{(0)} - f^{(0)}} +
\quadr{\T_{\Chi_0}\Ham{0}{1} - \Ham{0}{1}}\ ,
\end{equation}
where the first two are of order $\mathcal{O}(a^2)$, while the last is
of order $\mathcal{O}(aE)$.

The previous analysis forces to compare the newly generated quadratic
term $f^{(1)} := \T_{\Chi_0}f^{(0)} - f^{(0)}$ with the quartic
potential $\Ham{0}{1}$, the first being $\mathcal{O}(a^2)$ and the
second $\mathcal{O}(E)$. The part of the remainder which is
$\mathcal{O}(aE)$ can be neglected at this step, being much smaller
than $\Ham{0}{1}$. If $a\lesssim E$ then $f^{(1)}$ can be transferred
in the remainder and we can pass to consider $\Ham{0}{1}$ as the next
term to be normalized. 

\paragraph{Second step with ${\bf a\lesssim E}$:} The resonant term in
$\Ham{0}{1}$ reads
\begin{equation}
\label{3.Z2}
Z_1^{(0)}= \frac{3E}8 \sum_{j=0}^{N-1}{|\xi_j|^4}.
\end{equation}
Thus removing through $\Chi_1$ all the Range terms of $\Ham{0}{1}$, we
obtain the normal form
\begin{equation}
\label{3.KG2step}
H(\xi,\eta)= \overline{K} + \mathrm{h.o.t.}\ ,
\qquad
\overline{K}:= h_\omega+Z_0^{(0)} + Z_1^{(0)}\ ,
\qquad
\Poi{h_\omega}{\overline{K}}=0 \ .
\end{equation}
Such a normal form of $H$ is a dNLS model
\begin{equation}
\label{4.effDNLS}
\overline{K}(\xi,\eta) = \sum_{j=0}^{N-1}{|\xi_j^2|} + \frac{a}2
\sum_{j=0}^{N-1}{|\xi_{j+1}-\xi_j|^2} +
\frac{3E}8\sum_{j=0}^{N-1}{|\xi_j|^4}\ .
\end{equation}

\noindent

%
%

\section{Proof of Theorem \ref{prop.gen} and of Corollary \ref{c.Hom.K.var}}
\label{s:construction}

This section is devoted to the proof of Theorem~\ref{prop.gen} and of
its immediate Corollary~\ref{c.Hom.K.var}.  In
Section~\ref{ss:formal.tchi} we include a formal part, where we recall
the process of construction of the normal form and discuss the
solvability of the homological equation. In Section~\ref{ss:estimates}
we give all the quantitative estimates yielding to the statements of
Theorem \ref{prop.gen}. Finally in Section~4.4 we prove Corollary
\ref{c.Hom.K.var}.

%
%

\subsection{{Previous results: formal algorithm and solution of the homological equation}}
\label{ss:formal.tchi}

We recall here some basic facts on the formal algorithm we use to
construct the normal form and to estimate its remainder: we refer to
\cite{Gio03} for a detailed treatment.

Given a truncated generating sequence $\Chi=\{\Chi_s\}_{s=1,\dots,r}$,
we define the linear operator $T_\Chi$ as
\begin{equation}
\label{e.TelChi}
T_\Chi=\sum_{s\geq 0}E_s,\qquad E_0=\Id,\qquad E_s= \sum_{j=1}^s
\tond{\frac{j}{s}}\lie{\Chi_j}E_{s-j}\ ,
\end{equation}
where $\lie{\Chi_j}\cdot=\Poi{\Chi_j}{\cdot}$ is the Lie derivative
with respect to the flow generated by $\Chi_j$.  

We look for $\Chi$ and a function $\Ham{r}{}$ which represents a
resonant \emph{normal form} for the original Hamiltonian $H$, which
means that $\Ham{r}{}$ satisfies the equation $T_\Chi \Ham{r}{}=H$,
with $\Ham{r}{}$ of the form \eqref{e.Ham.r}, where the normalized
terms are in normal form in the sense that
\begin{displaymath}
\Poi{H_\Omega}{Z_s}=0\qquad \forall s=0,\ldots,r\ .
\end{displaymath}
An immediate consequence is that $H_\Omega$ is an approximated first
integral for the transformed Hamiltonian $\Ham{r}{}$, since
\begin{displaymath}
\Poi{H_\Omega}{\Ham{r}{}} = \Poi{H_\Omega}{P^{(r+1)}}\ .
\end{displaymath}

We now translate the equation $T_\Chi Z=H$ into a formal recursive
algorithm that allows us to construct both $Z$ and $\Chi$. We take
into account that our Hamiltonian has the particular form
$H=H_0+H_1\,$, where $H_1$ is a homogeneous polynomial of degree 4.

For  $s\geq 1$ the generating function $\Chi_{s}$ and the normalized
term $Z_s$ must satisfy the recursive set of {\sl homolo\-gical equations}
\begin{equation}
\label{e.om.2s}
\lie{H_0}\Chi_{s} = Z_{s} + \Psi_{s};
\end{equation}
where 
\begin{align}
\label{e.Psi.2s}
\Psi_{1} &= H_1,\\ \Psi_{s} &= \frac{s-1}{s}\lie{\Chi_{s-1}}H_1 +
\sum_{j=1}^{s-1} \frac{j}{s} E_{s-j}Z_{j}\ ,\qquad s\geq2\ .
\end{align}

Our aim is to solve the homological equation \eqref{e.om.2s} with the
prescription that $\lie{\Omega}Z_s=0$ where $\lie{\Omega}\cdot
= \{H_{\Omega},\cdot\}$ is the Lie derivative along the vector field
generated by $H_{\Omega}$ as defined in \eqref{e.dec.H0}.  Thus we
first point out the properties of the operator $\lie{\Omega}$, and
then discuss how to solve the homological equation using a Neumann
series expansion of the operator $\lie{H_0}$, which is a $\mu$
perturbation of $\lie{\Omega}$.

\subsubsection{The linear operator $\lie{\Omega}$}
\label{ssect.2.1}

It is an easy matter to check that $\lie{\Omega}$ maps the space of
homogeneous polynomials into itself.  It is also well known that
$\lie{\Omega}$ may be diagonalized via the canonical transformation
\begin{equation}
x_j = \frac{1}{\sqrt{2}}(\xi_j+i\eta_j)\ ,\quad
y_j = \frac{i}{\sqrt{2}}(\xi_j-i\eta_j)\ ,\quad
j=1,\ldots,N\ ,
\label{pertur.51}
\end{equation}
where $(\xi,\eta)\in\CC^{2n}$ are complex variables.  For a
straightforward calculation gives
\begin{equation}
\lie{\Omega}\xi^j\eta^k =
i\Omega\,(|k|-|j|)\,\xi^j\eta^k\ ,
\label{pertur.80}
\end{equation}
where $|j|=|j_1|+\ldots+|j_N|$ and similarly for $|k|\,$.\footnote{ If
$f(x,y)=\sum_{j,k} c_{j,k} x^jy^k$ is a real polynomial, then
\eqref{pertur.51} produces a polynomial $g(\xi,\eta)=\sum_{j,k}
b_{j,k}\xi^j\eta^k$ with complex coefficients $b_{j,k}$ satisfying
$b_{j,k} = - b_{k,j}^*\ ,$
and conversely.}

Let us denote by $\Pscr^{(s)}$ the linear space of the homogeneous
polynomials of degree $s$ in the $2n$ canonical variables
$\xi_1,\ldots,\xi_n,\eta_1,\ldots,\eta_n\,$.  The kernel and the range
of  $\lie{\Omega}$ are defined in the usual way, namely
\begin{equation*}
\Nscr^{(s)} = \lie{\Omega}^{-1}(0)\ ,\qquad
\Rscr^{(s)} = \lie{\Omega}(\Pscr^{(s)})
\end{equation*}
The property of $\lie{\Omega}$ of being diagonal implies
\begin{equation*}
\Nscr^{(s)}\cap\Rscr^{(s)} = \{0\}\ ,\qquad
\Nscr^{(s)}\oplus\Rscr^{(s)} = \Pscr^{(s)}\ .
\end{equation*}
Thus the inverse $\lie{\Omega}^{-1}:\range\rightarrow\range$ is
uniquely defined.  


\subsubsection{The linear operator $\lie{H_0}$}
\label{ssect.2.2}
We come now to the solution of the homological
equation~(\ref{e.om.2s}).  In view of~\eqref{e.dec.H0} we have
$\lie{H_0} = \lie{\Omega} + \lie{Z_0}$, or equivalently $\lie{H_0} =
\lie{\Omega}\tond{\Id + \lie{\Omega}^{-1}\lie{Z_0}}$.  Thus we have
\begin{equation}
\label{e.inv.lieH0}
\lie{H_0}^{-1} = \tond{\Id + K}^{-1}\lie{\Omega}^{-1}\ ,
\qquad\qquad
K := \lie{\Omega}^{-1}\lie{Z_0}\ ,
\end{equation}
and using the Neumann's series we formally get $\tond{\Id + K}^{-1} =
\sum_{l\geq 0}(-1)^l K^l$.

A general consideration is the following.  Let us consider $\lie{H_0}$
on a topological space $\pspazio$ endowed with any norm
$\norm{\cdot}$. The next Proposition (see
again~\cite{GioPP13},~Section~4) claims that, although we ignore its
Kernel and Range, we can invert $\lie{H_0}$ on $\range$.

\begin{proposition}[see \cite{GioPP13}]
\label{p.2.1}
If the restriction of $K$ to $\range$ satisfies
\begin{equation}
\label{e.norm.K}
\norm{K}_{\rm op}<1,
\end{equation}
then for any $g\in\range$, there exists a unique element $f\in\range$
of the form
\begin{displaymath}
f = \sum_{l\geq 0}(-1)^l K^l g \ ,
\qquad\text{such that}\qquad
\tond{\Id+K}f = g \ .
\end{displaymath}
\end{proposition}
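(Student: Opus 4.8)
The statement is the classical Neumann-series inversion of $\Id+K$ restricted to the range $\range$, under the contraction hypothesis $\norm{K}_{\rm op}<1$. The plan is to verify three things: that the partial sums of $\sum_{l\ge 0}(-1)^l K^l g$ form a Cauchy sequence in $\range$ and hence converge, that the limit $f$ satisfies $(\Id+K)f=g$, and that $f$ is the unique element of $\range$ with this property.

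First I would check that $K$ maps $\range$ into itself, so that all the powers $K^l g$ appearing in the series are again elements of $\range$; this is where the decomposition $\Pscr^{(s)}=\Nscr^{(s)}\oplus\range$ and the fact that $\lie{\Omega}^{-1}$ is defined as a map $\range\to\range$ are used, together with the observation that $\lie{Z_0}$ preserves the grading (it raises the polynomial degree by the degree of $Z_0$ minus $2$, but on each fixed homogeneous space the operators act consistently, so $K=\lie{\Omega}^{-1}\lie{Z_0}$ is a well-defined endomorphism of the relevant $\range$). Once $K:\range\to\range$ is established, the hypothesis $\norm{K}_{\rm op}<1$ gives $\norm{K^l g}\le \norm{K}_{\rm op}^l\,\norm{g}$, so the tail of the series is bounded by the geometric series $\sum_{l\ge L}\norm{K}_{\rm op}^l\,\norm{g}\to 0$; since $\range$ is finite-dimensional (a subspace of the finite-dimensional $\Pscr^{(s)}$), it is complete, and the partial sums converge to some $f\in\range$.

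Next I would verify that $f$ solves the equation. Write $f_L=\sum_{l=0}^{L}(-1)^l K^l g$; then $(\Id+K)f_L = g + (-1)^L K^{L+1} g$, and the error term has norm at most $\norm{K}_{\rm op}^{L+1}\norm{g}\to 0$, so passing to the limit (using continuity of the bounded operator $\Id+K$) gives $(\Id+K)f=g$. For uniqueness, if $f'\in\range$ also satisfies $(\Id+K)f'=g$, then $(\Id+K)(f-f')=0$, i.e. $f-f'=-K(f-f')$, whence $\norm{f-f'}\le \norm{K}_{\rm op}\norm{f-f'}$ with $\norm{K}_{\rm op}<1$, forcing $f=f'$.

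The only genuinely delicate point is the first one: making precise that $K$ is a well-defined operator on $\range$ and that the operator norm in the hypothesis is taken with respect to the correct norm on the correct (finite-dimensional, fixed-degree) space, so that the geometric estimate and the completeness argument both apply. Everything after that is the standard Neumann-series argument and is routine. Since the proposition is quoted from~\cite{GioPP13}, I would in fact simply appeal to that reference for the statement and present the above as the short self-contained justification.
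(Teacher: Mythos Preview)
Your proposal is correct and follows the standard Neumann-series argument; the paper itself gives no proof of this proposition but simply cites~\cite{GioPP13}, so there is nothing to compare against beyond that reference.

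One small sharpening: your justification that $K$ maps $\range$ into itself is slightly imprecise. The point is not merely that $\lie{Z_0}$ preserves the polynomial grading, but that $\lie{Z_0}$ maps $\range$ into $\range$; this holds because $\Poi{H_\Omega}{Z_0}=0$ implies, via the Jacobi identity, that $\lie{\Omega}$ and $\lie{Z_0}$ commute, so if $g=\lie{\Omega}h\in\range$ then $\lie{Z_0}g=\lie{\Omega}\lie{Z_0}h\in\range$. With that in place, $\lie{\Omega}^{-1}$ can be applied and $K=\lie{\Omega}^{-1}\lie{Z_0}$ is a genuine endomorphism of $\range$, after which your convergence, solution, and uniqueness steps go through exactly as written.
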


%
%

\subsection{Proof of Theorem \ref{prop.gen}}
\label{ss:estimates}

In order to show that the exponential decay of interactions is
preserved by our construction, our first aim (as in \cite{GioPP13}) is
to show that the functions $\Chi_s$, $\Psi_s$ and $Z_s$, that are
generated by the formal construction, are of class
$\Dscr(\cdot,\sigma_s)$, with suitable values of $\sigma_s$ and with
some constant to be evaluated in place of the dot. A second step
pertains the estimate of the remainder $\R^{(r+1)}$, which has still
to be expressed in terms of $\Chi_s$, $\Psi_s$ and $Z_s$. We conclude
with the estimates of the Hamiltonian vector fields of the generating
functions $\Chi_s$, which allow to control the deformation of the
transformation $\T_\Chi$.

Let us pick $\sigma_*\lt\sigma_1$, and recall we have defined in
Theorem \ref{prop.gen}
\begin{equation}
\label{succ.sigma}
\sigma_j := \sigma_1 - \frac{(j-1)}{r}(\sigma_1-\sigma_*)\ ,
\quad{\rm for}\ j=1,\ldots,r\ ,
\end{equation}
so that $\sigma_1\gt\ldots\gt\sigma_r>\sigma_{r+1}=\sigma_*\,$.
We shall repeatedly use the following elementary estimates. By the
general inequality
\begin{equation}
\label{e.exp.ineq}
1-e^{-x} \ge \tond{\frac{x}{a}}(1-e^{-a}) \quad{\rm for}\ 0\le x\le a\ ,
\end{equation}
for $0\le j\lt s\le r$ we get
\begin{equation}
\label{eq.denpois}
\vcenter{\openup1\jot\halign{
\hfil$\displaystyle{#}$
&$\displaystyle{#}$\hfil
\cr
1-e^{-\max(\sigma_j,\sigma_{s-j})} 
 &\ge \frac{1 -e^{-\sigma_0}}{\sigma_0}
   \max(\sigma_j,\sigma_{s-j})
 \ge \frac{(1 -e^{-\sigma_0})}{4}\ ,
\cr
1 - e^{-(\sigma_j-\sigma_s)}
 &\ge \frac{s-j}{r}(1-e^{-(\sigma_0-\sigma_*)}) 
\quadr{\frac{\min(\sigma_0-\sigma_1,\sigma_1-\sigma_*)}{\sigma_0-\sigma_*}}\
 .
\cr
}}
\end{equation}
To get the first inequality we make use of
$\max(\sigma_j,\sigma_{s-j}) \geq (\sigma_1+\sigma_*)/2 \geq \sigma_*$ and of the hypothesis 
$\sigma_*\geq\sigma_0/4$.  Concerning the second of \eqref{eq.denpois},
take first $0=j<s\leq r$ and apply \eqref{e.exp.ineq} to get
\begin{align*}
  1 - e^{-(\sigma_0-\sigma_s)}
\geq
  \quadr{\frac{(\sigma_0-\sigma_1) + \frac{s-1}{r}(\sigma_1-\sigma_*)}{\sigma_0-\sigma_*}}
  &(1-e^{-(\sigma_0-\sigma_*)})
>\\>
  \frac{s}{r}\tond{\frac{\sigma_0-\sigma_1}{\sigma_0-\sigma_*}} &(1-e^{-(\sigma_0-\sigma_*)}) \ ;
\end{align*}
then take $1\leq j<s\leq r$ and apply \eqref{e.exp.ineq} to get
\begin{displaymath}
  1 - e^{-(\sigma_j-\sigma_s)} \geq 
  \tond{\frac{\sigma_j-\sigma_s}{\sigma_0-\sigma_*}}(1-e^{-(\sigma_0-\sigma_*)})
  = \frac{s-j}{r}\tond{\frac{\sigma_1-\sigma_*}{\sigma_0-\sigma_*}}
  (1-e^{-(\sigma_0-\sigma_*)}) \ .
\end{displaymath}

%
%
\paragraph{Estimate of the homological equation.}
\label{par:homeq}
We summarize here the main result and comments which can be found
in \cite{GioPP13}, proof included, about the estimate of the
homological equations \eqref{e.om.2s}.

In order to proceed we first define
\begin{equation}
\label{e.E0*}
E_{0*}:= \frac{\min(\sigma_0-\sigma_1,\sigma_1-\sigma_*)}{\sigma_0-\sigma_*}\ .
\end{equation}
and then we give consistent values for the constants of Theorem~\ref{prop.gen}
\begin{equation}
\label{const.prop.gen}
\begin{aligned}
 \mu_* &= \frac{\Omega(1-e^{-\sigma_0})(1-e^{-(\sigma_0-\sigma_*)})E_{0*}}
               {8C_{\zeta_0}e^{\sigma_1}} \ ,
\\
 \gamma &= 2\Omega\Bigl(1-\frac{r\mu}{\mu_*}\Bigr) \ ,
\\
 C_*  &= \frac{C_{h_1}}{\gamma(1-e^{-\sigma_0})(1-e^{-(\sigma_0-\sigma_*)})E_{0*}} \ .
\end{aligned}
\end{equation}

\begin{lemma}
\label{l.yal}
Let $G=g^{\oplus}\in\Rscr^{(2s+2)}$ be a cyclically symmetric
homogeneous polynomial of degree $2s+2$ of class $\Dscr(C_g,\sigma_s)$.
Let $K$ as defined in~\eqref{e.inv.lieH0} and assume
\begin{equation}
\label{CK}
C_K := \frac{4 C_{0}
e^{-(\sigma_0-\sigma_1)}}{\Omega(1-e^{-\sigma_0})(1-e^{-(\sigma_0-\sigma_*)})E_{0*}}
\le \frac{1}{2r}\ .
\end{equation}
Then there exists a cyclically symmetric homogeneous polynomial $\Xscr =
\chi^{\oplus}\in\Rscr^{(2s+2)}$ which solves $\lie{H_0}\Xscr = G$;
moreover $\chi$ is of class $\Dscr(C_g/\gamma,\sigma_s)$ with
\begin{equation}
\label{e.omol.1}
 \gamma = 2\Omega(1-rC_K)\ .
\end{equation}
\end{lemma}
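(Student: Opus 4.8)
The plan is to solve the homological equation $\lie{H_0}\Xscr = G$ by exploiting the decomposition $\lie{H_0}=\lie{\Omega}(\Id+K)$ from \eqref{e.inv.lieH0}, and to track the decay class through each operation. First I would observe that since $G\in\range$, Proposition~\ref{p.2.1} guarantees — once we check the condition $\norm{K}_{\rm op}<1$ on $\range$ — a unique $\tilde G\in\range$ with $(\Id+K)\tilde G = G$, given by the Neumann series $\tilde G=\sum_{l\ge 0}(-1)^l K^l G$; then $\Xscr := \lie{\Omega}^{-1}\tilde G$ solves $\lie{H_0}\Xscr=G$, and it is still in $\Rscr^{(2s+2)}$ since $\lie{\Omega}$, $\lie{Z_0}$ and hence $K$ all preserve the degree and the Range. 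Working at the level of seeds (using part~4 of the cyclic-symmetry Lemma, so that a seed of $\lie{Z_0}g^\oplus$ is $\{\zeta_0,g^\oplus\}$), the whole construction descends to the seed $\chi$.

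The quantitative heart of the argument is the following pair of estimates, to be combined. First, the action of $\lie{\Omega}^{-1}$ on a homogeneous polynomial of degree $2s+2$ in $\Rscr$: by the diagonalization \eqref{pertur.80}, on each monomial $\xi^j\eta^k$ the operator multiplies by $\big(i\Omega(|k|-|j|)\big)^{-1}$, and on the Range the nonzero integer $|k|-|j|$ has absolute value at least $1$, so $\lie{\Omega}^{-1}$ does not increase the polynomial norm beyond a factor $1/\Omega$; crucially it does not enlarge supports, so it preserves the decay rate $\sigma_s$ — and, as in \cite{GioPP13}, one gets the bound on $\norm{(\lie{\Omega}^{-1}g)^{(m)}}_1$ controlled by $\frac{1}{\Omega}\norm{g^{(m)}}_1$. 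Second, the action of $K=\lie{\Omega}^{-1}\lie{Z_0}$: taking a Poisson bracket with the seed $\zeta_0\in\Dscr(C_{\zeta_0},\sigma_0)$ costs a derivative on both factors and convolves the supports, so the interaction distance of a term of $\{\zeta_0,g^{(m')}\}$ coming from $\zeta_0^{(m'')}$ is at most $m'+m''$; summing the geometric series in the gained index $m''$ produces exactly the denominators $1-e^{-\sigma_0}$ and the shift factor $e^{-(\sigma_0-\sigma_1)}$ (the mismatch between $\sigma_0$ and $\sigma_s\le\sigma_1$), together with the numerical factor $4$ from differentiating a quartic-type seed and the degree count. This yields $\norm{K}_{\rm op}\le C_K$ with $C_K$ as in \eqref{CK}, and the hypothesis $C_K\le\frac1{2r}<1$ both legitimizes Proposition~\ref{p.2.1} and, via $\norm{\tilde G}\le \frac{1}{1-C_K}\norm{G}$, gives the bound $\norm{\chi}$-class $\Dscr\big(\frac{C_g}{\Omega}\cdot\frac{1}{1-C_K},\sigma_s\big)$. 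Writing $\gamma=2\Omega(1-rC_K)\le \Omega(1-C_K)$ for $r\ge 1$ (so $\frac{1}{\Omega(1-C_K)}\le\frac{2}{\gamma}$... — more carefully, $\frac{1}{\Omega(1-C_K)}\le\frac{1}{\Omega(1-rC_K)}=\frac{2}{\gamma}$, and one checks the constants absorb the factor), one arrives at class $\Dscr(C_g/\gamma,\sigma_s)$.

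The step I expect to be the main obstacle is the bookkeeping of the decay rates in the Neumann series: each application of $K$ both shrinks the admissible decay rate and must be re-expanded in a left-aligned form, so one has to be careful that iterating $K$ arbitrarily many times does not degrade $\sigma_s$ — this is precisely why one works with a fixed target rate $\sigma_s$ throughout and absorbs all the rate loss into the single passage from $\sigma_0$ to $\sigma_1$ (hence the factor $e^{-(\sigma_0-\sigma_1)}$ appearing once in $C_K$, not compounding). One also has to verify that $K$ maps $\Dscr(\cdot,\sigma_s)\cap\range$ into itself with the operator norm bound uniform over the number of iterations, so that $\sum_l C_K^l$ converges geometrically and the resulting $\tilde G$ is genuinely of class $\Dscr(\cdot,\sigma_s)$ with the claimed constant; the identity $1-e^{-\max(\sigma_j,\sigma_{s-j})}\ge(1-e^{-\sigma_0})/4$ from \eqref{eq.denpois}, together with $E_{0*}$ from \eqref{e.E0*}, is exactly what makes these geometric sums close with $N$-independent constants. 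Finally I would record that $\gamma>0$ is equivalent to $rC_K<1$, which in turn, after inserting the value of $C_{\zeta_0}$ and $\mu_*$ from \eqref{const.prop.gen}, is the condition $r<\mu_*/\mu$ — consistent with (indeed weaker than) the hypothesis \eqref{e.muperr} $r<\frac12(\mu_*/\mu)$ that will be used later to keep $\gamma$ bounded away from zero.
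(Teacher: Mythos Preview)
Your overall strategy is exactly the one the paper (via \cite{GioPP13}) has in mind: invert $\lie{H_0}=\lie{\Omega}(\Id+K)$ on $\range$ by Proposition~\ref{p.2.1}, estimate $K$ using Corollary~\ref{cor.pois.Z0} with $f=\zeta_0$, and track the $\Dscr(\cdot,\sigma_s)$ class through the Neumann series. The paper does not reprove the lemma here, but the scaffolding it provides makes the intended argument unambiguous, and yours matches it.

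Where your write-up actually goes wrong is in the final bookkeeping of the two numerical features of $\gamma=2\Omega(1-rC_K)$, and the hand-wave ``one checks the constants absorb the factor'' is not correct:
\begin{itemize}
\item The factor $2$ in front of $\Omega$ is \emph{not} absorbed anywhere: it comes from the small divisor. Since the degree is $2s+2$, in the complex coordinates~\eqref{pertur.51} every monomial has $|j|+|k|$ even, hence $|k|-|j|$ is even, and on $\range$ one has $\bigl||k|-|j|\bigr|\ge 2$. Thus $\lie{\Omega}^{-1}$ contributes a factor $\frac1{2\Omega}$, not $\frac1{\Omega}$.
\item The factor $r$ in $1-rC_K$ does \emph{not} arise by weakening $\frac1{1-C_K}\le\frac1{1-rC_K}$ (which, as you noticed, leaves you with $2C_g/\gamma$). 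It comes from the genuine $s$-dependence of the $K$-estimate: applying Corollary~\ref{cor.pois.Z0} to $\{\zeta_0,g\}$ produces a denominator $1-e^{-(\sigma_0-\sigma_s)}$, and to bound this \emph{uniformly in $s$} you must use the second inequality of~\eqref{eq.denpois}, which yields a factor $r/s$. Combined with the degree factor $\sim(s+1)$ from the Poisson bracket, this gives the effective bound $\|K\|_{\rm op}\lesssim rC_K$ in the $\Dscr(\cdot,\sigma_s)$ class.
\end{itemize}
With these two corrections the Neumann series gives $\tilde g$ of class $\Dscr\big(\frac{C_g}{1-rC_K},\sigma_s\big)$ and then $\chi=\lie{\Omega}^{-1}\tilde g$ of class $\Dscr\big(\frac{C_g}{2\Omega(1-rC_K)},\sigma_s\big)=\Dscr(C_g/\gamma,\sigma_s)$, exactly as claimed. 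Also, your first displayed inequality $\gamma\le\Omega(1-C_K)$ is false under the hypothesis $C_K\le\frac1{2r}$; you should simply drop that line.
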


\begin{remark}
\label{r.picc.div}
In Proposition~\ref{p.2.1} we ask $\norm{K}_{\rm op}<1$ to simply
perform the inversion. In the above Lemma~\ref{l.yal},
condition~\eqref{CK} reads as $\norm{K}_{\rm op}<1/2$, and this stronger
requirement is to control the small divisors~\eqref{e.omol.1}.
\end{remark}

\noindent
We emphasize that in view of the first of~\eqref{const.prop.gen} we
have $C_K=\mu/\mu_*\,$.  Therefore, condition~\eqref{CK} reads
$2r\mu<\mu_*$, which is the smallness condition for $\mu$ of
Theorem~\ref{prop.gen}.  Furthermore this gives the value of $\gamma$
in~\eqref{const.prop.gen}. Moreover, the constant $\gamma$ is
evaluated as independent of $s$, but seems to depend on the degree $r$
of truncation of the first integral.  However, in view of the
condition on $\mu$ we have $\Omega\leq\gamma\leq2\Omega$.

Having thus proved that the homological equation can be solved, the
statement~(i) of Theorem~\ref{prop.gen} follows.

%
%

\paragraph{Iterative estimates on the generating sequence.}
We follow the same procedure used in \cite{GioPP13}, Subsection 4.2.
We recall that the generating sequence is found by recursively solving
the homological equations $\lie{H_0}\chiph_{s} =Z_{s}+\Psi_{s}$ for
$s=1,\ldots,r$ with
\begin{equation}
\label{est.gs.1}
\vcenter{\openup1\jot\halign{
\hfil$\displaystyle{#}$
&$\displaystyle{#}$\hfil
&$\displaystyle{#}$\hfil
\cr
\Psi_1 &= H_1\ ,
\cr
\Psi_{s} &=
 \tond{\frac{s-1}{s}}\lie{\Chi_{s-1}} H_1
  + \sum_{l=1}^{s-1} \tond{\frac{l}{s}} E_{s-l} Z_{l}\ ,
\cr
E_s Z_{l}
&= \sum_{j=1}^{s} \tond{\frac{j}{s}}\lie{\Chi_{j}}E_{s-j}Z_{l}
&\quad{\rm for}\ s\ge 1\ .
\cr
}}
\end{equation}
Our aim is to find positive constants $C_{\psi,1},\ldots,C_{\psi,r}$
so that $\Psi_{s}$ is of class $\Dscr(C_{\psi,s},\sigma_s)$.  In view
of lemma~\ref{l.yal} this implies that $Z_{s}$ of class
$\Dscr(C_{\zeta,s},\sigma_s)$ with $C_{\zeta,s}=C_{\psi,s}$ and
$\chiph_{s}$ of class $\Dscr(C_{\chi,s},\sigma_s)$ with
$C_{\chi,s}=C_{\psi,s}/\gamma\,$.  Meanwhile we also find constants
$C_{\zeta,s,l}$ such that $E_{s} Z_{l}$ is of class
$\Dscr(C_{\zeta,s,l},\sigma_{s+l})$ whenever $s+l\le r$.

We look for a constant $B_r$ and two sequences 
$\{\eta_s\}_{1\le s\le r}$ and $\{\theta_{s}\}_{1\le s\le r}$ such
that 
\begin{equation}
\label{est.gs.5}
\vcenter{\openup1\jot\halign{
\hfil$\displaystyle{#}$
&$\displaystyle{#}$\hfil
&$\displaystyle{#}$\hfil
\cr
C_{\psi,1} 
 &\le \eta_1C_{h_1}\ ,
\quad
C_{\zeta,0,1} \le \eta_1\theta_0 C_{h_1}\ ,
\cr
C_{\psi,s} &\le \frac{\eta_s}{s} {B_r^{s-1}C_{h_1}}
&\quad {\rm for}\ s\gt 1\ ,
\cr
C_{\zeta,s,l} &\le \theta_{s} \eta_l {B_r^{s+l-1}C_{h_1}}
&\quad {\rm for}\ s\ge 1\,,\> l\ge 1\ .
\cr
}}
\end{equation}

In view of $\Psi_1=H_1$ and of $E_0Z_{1}=Z_{1}$ we can choose
$\eta_1=\theta_0=1$.  By~(\ref{est.gs.1}) and using lemmas~\ref{l.yal}
and~\ref{lem.poisson} together with corollary~\ref{cor.poisson} we get
the recursive relations
\begin{equation}
\label{est.gs.2}
\vcenter{\openup1\jot\halign{
\hfil$\displaystyle{#}$
&$\displaystyle{#}$\hfil
\cr
C_{\zeta,s,l} 
&\le \frac{4}{s} \sum_{j=1}^{s-1} 
\frac{j(s+l-j)\eta_j\eta_l\theta_{s-j}}{(1-e^{-\max(\sigma_j,\sigma_{s+l-j})+\sigma_{s+l}})
        (1-e^{-\max(\sigma_j,\sigma_{s+l-j})})} \cdot 
        \frac{B_r^{s+l-2} C_{h_1}^2}{\gamma}\ .
\cr
C_{\psi,s} 
&\le \biggl(
  \frac{8(s-1) \eta_{s-1} C_{h_1}}{s
        (1-e^{-(\sigma_0-\sigma_{s})})(1-e^{-\sigma_0})} 
  + \sum_{l=1}^{s-1} \frac{lB_r}{s} \eta_l\theta_{s-l}
  \biggr) 
 \cdot \frac{B_r^{s-2}C_{h_1}}{\gamma}\ ,
\cr
}}
\end{equation}
Using the first of \eqref{eq.denpois}, with $s+l$ instead of $s$, we have
\begin{displaymath}
1-e^{-\max(\sigma_j,\sigma_{s+l-j})} > \frac{1-e^{-\sigma_0}}{4}\ .
\end{displaymath}
Using the second of~\eqref{eq.denpois} in a similar way to deal with
\begin{displaymath}
1-e^{-[\max(\sigma_j,\sigma_{s+l-j})-\sigma_{s+l}]} \geq 
\frac{s+l-\min(j,s+l-j)}{r}\tond{1-e^{-(\sigma_0-\sigma_*)}}E_{0*}\ ,
\end{displaymath}
and setting
\begin{equation}
\label{est.gs.4}
B_r = \frac{16 C_{h_1} r}{\gamma
  (1-e^{-(\sigma_0-\sigma_*)})(1-e^{-\sigma_0})E_{0*}}\ .
\end{equation}
we get
\begin{equation}
\label{est.gs.3}
\vcenter{\openup1\jot\halign{
\hfil$\displaystyle{#}$
&$\displaystyle{#}$\hfil
\cr
C_{\zeta,l,s} 
&\le \frac{1}{s} \sum_{j=1}^{s-1} 
      {j\eta_j\eta_l\theta_{s-j}}\,
       {B_r^{s+l-1} C_{h_1}}\ ,
\cr
C_{\psi,s} 
&\le \biggl(
      \frac{1}{s}\eta_{s-1} 
      +\sum_{l=1}^{s-1} 
        \frac{l}{s} \eta_l\theta_{s-l}
     \biggr)
  {B_r^{s-1} C_{h_1}}\ .
\cr
}}
\end{equation}
Thus the inequalities~\eqref{est.gs.5} are satisfied by the sequences
recursively defined as
$$
\vcenter{\openup1\jot\halign{
\hfil$\displaystyle{#}$
&$\displaystyle{#}$\hfil
&$\displaystyle{#}$\hfil
\cr
\theta_{s} &:=
\sum_{j=1}^{s-1} \frac{j}{s} \eta_j\theta_{s-j} 
&\quad{\rm for}\ s\ge 1\ ,
\cr
\eta_s &:=
 \eta_{s-1} +\sum_{j=1}^{s-1} j \eta_j\theta_{s-j} &\quad{\rm for}\
      s\ge 2\ .
\cr
}}
$$ starting with $\eta_{1} =\theta_{0} = 1$.  It is possible,
recalling also that $s\leq r$, to show that (see \cite{GioPP13})
\begin{displaymath}
\eta_s  < 4^{s-1} r^{s-1}\ .
\end{displaymath}

Replacing this and~\eqref{est.gs.2} in the inequality~(\ref{est.gs.5})
for $C_{\psi,s}$ and recalling that $C_{\chi,s}\le C_{\psi,s}/\gamma$
we have
$$
C_{\chi,s} \le (64r^2C_*)^{s-1}\frac{C_{h_1}}{\gamma s}\ ,\qquad
 C_* =  \frac{C_{h_1}}{\gamma(1-e^{-\sigma_0})(1-e^{-(\sigma_0-\sigma_*})E_{0*}}\ .
$$
The proves the statement~(ii) of Theorem~\ref{prop.gen} with the
estimated value of $C_*$ in~\eqref{const.prop.gen}.  The
statement~(iii) also follows in view of $C_{\zeta,s}\le C_{\psi,s}\,$.

%
%

\paragraph{The remainder of the normal form.}

We combine here the formal algorithm developed in Chapter 4
of \cite{Gio03} with the previous estimates on
$\chi_s,\,\Psi_s,\,Z_s$. In general, the remainder $P^{(r+1)}$ can be
written using the operators $D_s$
\begin{equation}
\label{e.D.s}
D_s := -\sum_{j=1}^s\tond{\frac{j}{s}} D_{s-j}\lie{\chi_j}\qquad\qquad
D_0 := \Id\ ,
\end{equation}
which define the inverse of the canonical transformation generated by
$\T_\chi$
\begin{displaymath}
\T_\chi^{-1} := \sum_{s\geq 0}D_s\ .
\end{displaymath}
Thus, as stressed in Paragraph 5.2.3 of \cite{Gio03}, one has
\begin{equation}
\label{e.R.r}
P^{(r+1)} = \sum_{s>r}{\Ham{r}{s}}\ ,\qquad\qquad \Ham{r}{s}
:= \sum_{j=0}^{s-1}D_jH_{s-j} - \sum_{j=1}^s\frac{j}{s}
D_{s-j}\tond{\Psi_j-Z_j}\ .
\end{equation}
In our specific case
\begin{displaymath}
\Ham{r}{s} = D_{s-1}H_1- \sum_{j=1}^r\frac{j}{s}
D_{s-j}\tond{\Psi_j-Z_j}\ ;
\end{displaymath}
indeed the Hamiltonian has initially only one nonlinear term, namely a
quartic polynomial $H_1$, and $\chi_s=0$ for all $s\geq r+1$. 

Let us consider a generic cyclically symmetric polynomial
$F\in\Dscr(C_f,\sigma_h)$ of degree $2h+2$ with $h=1,\ldots,r$. We are
interested in estimating $D_lF$; more precisely, we look for a
sequence $d_l$ such that $D_lF\in\Dscr(d_l C_f,\sigma_*)$. We notice
that, since $\sigma_*<\sigma_r$, it is possible to deal with
$\lie{\chi_r}{(\Psi_r-Z_r)}$, where both functions in the Poisson
brackets belong to $\Dscr(\cdot,\sigma_r)$. For any $1\leq j\leq r$,
one has $\lie{\chi_j}F \in \Dscr(c_j,\sigma_*)$ for a some suitable
$c_j$. We estimate $c_j$ using \eqref{dcdm.11b}
\begin{displaymath}
c_j \leq \frac{4(j+1)(h+1)C_{\chi_j}C_f}{(1-e^{-\max(\sigma_j,\sigma_h)})
(1-e^{-\max(\sigma_j,\sigma_h)+\sigma_*})}\ .
\end{displaymath}
From the first of \eqref{eq.denpois} we have
\begin{displaymath}
\frac{1}{1-e^{-\max(\sigma_j,\sigma_h)}} \leq \frac4{1-e^{-\sigma_0}}\ ,
\end{displaymath}
and
\begin{align*}
  1-e^{-\max(\sigma_j,\sigma_h)+\sigma_*} &\geq
  \tond{\frac{\max(\sigma_j,\sigma_h)-\sigma_*}{\sigma_0-\sigma_*}}
  \tond{1-e^{-(\sigma_0-\sigma_*)}} 
\geq \\
&\geq
  \tond{\frac{\sigma_r-\sigma_*}{\sigma_0-\sigma_*}} \tond{1-e^{-(\sigma_0-\sigma_*)}}
=
  \tond{\frac{\sigma_1-\sigma_*}{\sigma_0-\sigma_*}}
  \tond{\frac{1-e^{-(\sigma_0-\sigma_*)}}{r}}
\end{align*}
hence by inserting $\sigma_1=\sigma_0/2$ (see \eqref{e.sigma.1}) and
making use of $\sigma_*=\sigma_0/4$ (see additional hypothesis in
claims (iv) and (v) of Theorem~\ref{prop.gen}) we obtain
\begin{displaymath}
\frac1{1-e^{-\max(\sigma_j,\sigma_h)+\sigma_*}}
\leq \frac{3r}{1-e^{-(\sigma_0-\sigma_*)}}\ .
\end{displaymath}
By combining the two above estimates we get
\begin{displaymath}
c_j \leq \frac{48(j+1)(h+1)(r+1)}{(1-e^{-\sigma_0})
(1-e^{-(\sigma_0-\sigma_*)})}C_{\chi_j}C_f \ .
\end{displaymath}
If we use\footnote{Since we are making use of $\sigma_*=\sigma_0/4$,
it is $E_{0*}=1/3$.}
\begin{equation*}
  C_{\chi_j} \leq C_r^{j-1} \frac{C_{h_1}}{\gamma j}
  \qquad
  C_r =
  \frac{192r^2 C_{h_1}}{\gamma(1-e^{-\sigma_0}) (1-e^{-(\sigma_0-\sigma_*)})}\ ,
\end{equation*}
then, using $h+1\leq r+1$, we have
\begin{equation}
\label{e.cj.1}
c_j \leq C_r^{j-1}\frac{192 r^2 C_{h_1}}{\gamma(1-e^{-\sigma_0})
(1-e^{-(\sigma_0-\sigma_*)})}C_f = C_r^j C_f\ .
\end{equation}
From \eqref{e.D.s}, the sequence $d_l$ has to satisfy
\begin{displaymath}
d_l C_f \leq \sum_{j=1}^l \frac{j}{l} d_{l-j} c_j
\leq \sum_{j=1}^l \frac{j}{l} d_{l-j}C_r^j C_f\ .
\end{displaymath}
We look for $d_l$ of he form $d_m := \theta_m C_r^m$.  By inserting in
the above recursive inequality and removing the common factor $C_r^l$
we obtain for $\theta_l$ the relation\footnote{$\theta_l$ is dominated
  by $x_l := \sum_{j=0}^{l-1} x_j$, which gives $x_l=2^l$ once it is set
  $x_0=1$.}
\begin{displaymath}
\theta_l \leq \sum_{j=1}^l\frac{j}{l}\theta_{l-j}
< \sum_{j=0}^{l-1}\theta_j \leq 2^l\ ,
\end{displaymath}
thus we have
\begin{equation}
\label{e.C.new.r}
D_l F \in \Dscr(\tilde C_r^l C_f,\sigma_*)\ ,\qquad\qquad
\tilde C_r := 2 C_r\ .
\end{equation}
We apply the above result to the elements giving $\Ham{r}{s}$; for the
sake of brevity we define $F_j:=\Psi_j-Z_j\in\Dscr\tond{\frac{2C_{h_1}}j
  C_r^{j-1},\sigma_*}$, so we have, for all $s\geq r+1$,
\begin{equation*}
\begin{aligned}
  D_{s-1}H_1 &\in \Dscr\tond{C_{h_1} \tilde C_r^{s-1},\sigma_*} 
\\
  D_{s-j}F_j &\in \Dscr\tond{\frac{4C_{h_1}}{j2^j} \tilde C_r^{s-1},\sigma_*}
\end{aligned}
\ \Longrightarrow\ 
C_{\Ham{r}{s}} \leq \tilde C_r^{s-1}
C_{h_1} \quadr{1+\sum_{j=1}^r\frac4{s2^j}} < 2\tilde C_r^{s-1}
C_{h_1}\ .
\end{equation*}

%
%

\paragraph{The canonical transformation.}

In order to control the domain of validity of the normal form, we need
to estimate the deformation due to the canonical transformation
$\T_\Chi$. Since it is obtained by composition of $r$ consecutive
normal form steps $\T_r := \T_{\Chi_r}$, we set
\begin{displaymath}
d=r\delta_1\qquad\qquad \delta_s=\delta_1=\frac{d}{r}\qquad \forall
s=1,\ldots,r\ ,
\end{displaymath}
and we require for any $s=1,\ldots,r$ the generic transformation
$\T_s$ to map
\begin{displaymath}
\T_s:B_{R_s}\rightarrow B_{R_{s-1}}\ ,
\qquad\text{where}\quad
R_s := R_{s-1}-\delta_s\ , \quad R_0\equiv R \ .
\end{displaymath}
Hence, by composition, the whole transformation $\T$ maps $B_{R-d}$ to
$B_R$. We have

\begin{lemma}
\label{l.6}
Let $\delta_s<R_{s-1}$ and $\ncamp{X_{\Xscr_s}}_{R_{s-1}}<\delta_s/e$,
then for $|t|\leq 1$ and $\norm{z}\leq R_{s}$ one has
\begin{equation}
  \label{e.defr0}
  \norm{\T_{s}^t(z) - (z)} \leq
    \frac1{1-c_s}\ncamp{X_{\Xscr_s}}_1\norm{z}^{2s+1} \ ,
\qquad\text{with}\qquad
  c_s:=\frac{e}\delta_s\ncamp{X_{\Xscr_s}}_{R_{s-1}} \ .
\end{equation}
\end{lemma}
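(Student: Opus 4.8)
I read $\T_s^t$ as the time-$t$ Hamiltonian flow generated by the cyclically symmetric function $\Xscr_s=\chi^{\oplus}_s$ of degree $2s+2$; indeed the Lie transform associated with a single generating function is the time-one map of its Hamiltonian flow (see \cite{Gio03}), so
\[
\T_s^t(z)=z+\int_0^t X_{\Xscr_s}\bigl(\T_s^\tau(z)\bigr)\,d\tau\ .
\]
The plan is to prove the estimate in two steps: a rough confinement of this flow, then a refinement obtained by a Gronwall/Picard iteration of the integral equation above in which the \emph{only} derivative of $X_{\Xscr_s}$ ever used is a first one, controlled by a Cauchy inequality. This way the combinatorics collapses into a single geometric series of ratio $c_s$, reproducing the factor $1/(1-c_s)$; since everything is done at the level of the norms $\norm{\cdot}$ and $\ncamp{\cdot}$, the argument works verbatim in $\Ph_2$ and in $\Ph_\infty$.

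\emph{Step 1 (confinement).} By Proposition~\ref{p.field} and the homogeneity of the polynomial norm, $\norm{X_{\Xscr_s}(w)}\le\ncamp{X_{\Xscr_s}}_1\norm{w}^{2s+1}\le\ncamp{X_{\Xscr_s}}_{R_{s-1}}$ for every $w\in B_{R_{s-1}}$, and by hypothesis this is $<\delta_s/e<\delta_s$. A standard continuity argument then shows that, for $z\in B_{R_s}$ with $R_s=R_{s-1}-\delta_s$, the orbit $\tau\mapsto\T_s^\tau(z)$ is defined and contained in $B_{R_{s-1}}$ for all $|\tau|\le1$, with the crude bound $\norm{\T_s^\tau(z)-z}\le\delta_s/e$. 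In particular every chord $z+\theta\bigl(\T_s^\tau(z)-z\bigr)$, $\theta\in[0,1]$, lies in $B_{R_{s-1}-\delta'}$ with $\delta':=\delta_s(1-1/e)$.

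\emph{Step 2 (refinement).} As $X_{\Xscr_s}$ is a polynomial with $\sup_{B_{R_{s-1}}}\norm{X_{\Xscr_s}}\le\ncamp{X_{\Xscr_s}}_{R_{s-1}}$, a Cauchy estimate for its differential — performed, like the proof of Proposition~\ref{p.field}, so as to stay uniform in $N$ — gives $\norm{DX_{\Xscr_s}(w)}_{\rm op}\le\ncamp{X_{\Xscr_s}}_{R_{s-1}}/\delta'$ for $w\in B_{R_{s-1}-\delta'}$. Writing $X_{\Xscr_s}\bigl(\T_s^\tau(z)\bigr)-X_{\Xscr_s}(z)=\bigl(\int_0^1 DX_{\Xscr_s}\bigl(z+\theta(\T_s^\tau(z)-z)\bigr)\,d\theta\bigr)\bigl(\T_s^\tau(z)-z\bigr)$ and setting $u(t):=\norm{\T_s^t(z)-z}$, $a:=\norm{X_{\Xscr_s}(z)}$, $b:=\ncamp{X_{\Xscr_s}}_{R_{s-1}}/\delta'=c_s/(e-1)$, Step~1 yields $u(t)\le|t|\,a+b\int_0^{|t|}u(\tau)\,d\tau$. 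Iterating (Picard), $u(t)\le a\sum_{k\ge1}b^{k-1}|t|^k/k!=\frac{a}{b}\bigl(e^{b|t|}-1\bigr)$; for $|t|\le1$ the elementary bounds $\frac{e^b-1}{b}\le\frac1{1-b}$ (true for $0<b<1$) and $b=c_s/(e-1)<c_s$, together with $a\le\ncamp{X_{\Xscr_s}}_1\norm{z}^{2s+1}$ (Proposition~\ref{p.field}), give precisely $u(t)\le\frac1{1-c_s}\ncamp{X_{\Xscr_s}}_1\norm{z}^{2s+1}$.

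I expect the main obstacle to be the bookkeeping of the radii: one must fix the working loss $\delta'$ so that \emph{all} the chords $z+\theta(\T_s^\tau(z)-z)$ remain inside the ball on which the Cauchy estimate for $DX_{\Xscr_s}$ holds — this is exactly where the crude bound $\norm{\T_s^\tau(z)-z}\le\delta_s/e$ of Step~1 (hence the hypothesis $\ncamp{X_{\Xscr_s}}_{R_{s-1}}<\delta_s/e$ and the constant $e$ in the definition of $c_s$) is used, and where the $(e-1)$ in $b$ originates. A companion point requiring care is that the Cauchy estimate for $DX_{\Xscr_s}$, just like Proposition~\ref{p.field} for $X_{\Xscr_s}$ itself, must be carried out so that its constant does not acquire a factor of $N$ (no $\sqrt N$ in the $\ell^2$ norm), which rests on the cyclic and short-range structure of $\Xscr_s$. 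Finally, the case $t\in[-1,0]$ follows by applying the same estimates to the flow of $-\Xscr_s$, whose $\norm{X_{-\Xscr_s}(z)}$ and $\ncamp{X_{-\Xscr_s}}_{R_{s-1}}$ coincide with those of $\Xscr_s$.
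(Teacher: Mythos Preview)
Your argument is correct and takes a genuinely different route from the paper's. The paper expands $\T_s^t(z)-z$ as the Lie series $\sum_{p\ge0}\frac{1}{(p+1)!}\lie{\Xscr_s}^pX_{\Xscr_s}(z)$, bounds each term via Corollary~\ref{c.0}, and then tames the product $\prod_{j=0}^{p-1}[(j+1)(2s+1)-j]\,\norm{z}^{2ps}$ by a combinatorial Cauchy trick (splitting $\delta_s$ into $p$ equal losses and using $p^p\le p!\,e^p$) to produce the geometric series in $c_s$. You instead confine the flow once, then run a Gronwall on the integral equation using a single Cauchy estimate on $DX_{\Xscr_s}$; the combinatorics of iterated Lie derivatives disappears, and the constant $e$ in $c_s$ arises from the interplay of the confinement depth $\delta_s/e$ and the working loss $\delta'=\delta_s(1-1/e)$ rather than from $p^p/p!$. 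Your route is more elementary and closer to standard ODE flow estimates; the paper's is purely algebraic and stays inside the Lie-transform formalism. One point to make fully explicit: the operator-norm bound $\norm{DX_{\Xscr_s}(w)}_{\rm op}\le\ncamp{X_{\Xscr_s}}_{R_{s-1}}/\delta'$, uniformly in $N$, is straightforward in $\Ph_\infty$ but in $\Ph_2$ amounts to the multilinear extension of Proposition~\ref{p.field} that underlies the paper's Lemma~\ref{l.0.2} and Corollary~\ref{c.3}. The paper's Lie-series approach sidesteps this subtlety because each $\lie{\Xscr_s}^pX_{\Xscr_s}$ is itself the Hamiltonian field of a cyclically symmetric function, so Proposition~\ref{p.field} applies directly term by term; in your approach the direction $v=\T_s^\tau(z)-z$ in $DX_{\Xscr_s}(w)[v]$ is not of that form, so you really do need the full operator bound.
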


\begin{proof}
By series expansion we have
\begin{displaymath}
  \T_s^t(z) - z = \sum_{p\geq 1}\frac1{p!}\lie{\Xscr_s}^{p-1}X_{\Xscr_s}(z) =
  \sum_{p\geq 0}\frac1{(p+1)!}\lie{\Xscr_s}^{p}X_{\Xscr_s}(z) \ .
\end{displaymath}
By applying Corollary~\ref{c.0} (with both $\Chi$ and $F$ of the
Corollary equal to $\Chi_s$)
\begin{equation}
\label{e.l6.1}
\norm{\lie{\Chi_s}^{p}X_{\Xscr_s}(z)}\leq
\prod_{j=0}^{p-1}[(j+1)s-j]\left(\ncamp{X_{\Xscr_s}}_1\right)^{p+1}
\norm{z}^s\norm{z}^{p(s-1)} \ .
\end{equation}
Let us take initially the case $p=2$ with $\norm{z}\leq R-2d$ and
$d=\delta_s/2$; we apply twice \eqref{e.cauchy} and \eqref{e.c.3.1}
\begin{displaymath}
(2s-1)s(R-2d)^{2s-2}\leq \frac1d s (R-d)^{2s-1} \leq \frac1d R^s s
  (R-d)^{s-1}\leq \frac1{d^2}R^{2s}.
\end{displaymath}
We define $d=\delta_s/p$ and by iteration of the same argument we get
\begin{equation}
\label{e.l6.2}
\prod_{j=0}^{p-1}[(j+1)s-j](R-pd)^{ps-p}\leq \frac1{d^p}R^{ps} =
\frac{p^p}{\delta_s^p}R^{ps}\leq p!\tond{\frac{e}{\delta_s}}^p (R^s)^p.
\end{equation}
We can estimate \eqref{e.l6.1} as follows
\begin{displaymath}
\norm{\lie{\Chi_s}^{p}X_{\Xscr_s}(z)}\leq
p!\ncamp{X_{\Xscr_s}}_1\norm{z}^s\tond{\frac{e\ncamp{X_{\Xscr_s}}_R}{\delta_s}}^p.
\end{displaymath}
Thus for $\norm{z}\leq R-\delta_s$ we have
\begin{displaymath}
\norm{\sum_{p\geq 0}\frac1{(p+1)!}\lie{\Xscr_s}^{p}X_{\Xscr_s}(z)} \leq
\norm{X_{\Xscr_s}}_1\norm{z}^s\sum_{p\geq 0}
 \tond{\frac{e}\delta_s \ncamp{X_{\Xscr_s}}_R}^{p}\ ,
\end{displaymath}
which gives the claim.
\end{proof}

\begin{corollary}
\label{c.4}
Let $\delta_s<R_{s-1}$ and also assume
\begin{equation}
\label{e.sm.Xchis}
\ncamp{X_{\Xscr_s}}_{R_{s-1}}<\frac{\delta_s}{1+e}\ ,
\end{equation}
then, for $z\in B_{R_s}$, one has
\begin{equation}
\label{e.def.chis}
\norm{\T_{s}(z)-z}\leq (1+e)\ncamp{X_{\Xscr_s}}_{R_{s-1}} \leq \delta_s
\qquad\Rightarrow\qquad
\T_{s}:B_{R_s}\to B_{R_{s-1}} \ .
\end{equation}
\end{corollary}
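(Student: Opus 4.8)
The plan is to derive Corollary~\ref{c.4} as a direct consequence of Lemma~\ref{l.6}. First I would observe that the smallness hypothesis \eqref{e.sm.Xchis}, namely $\ncamp{X_{\Xscr_s}}_{R_{s-1}}<\delta_s/(1+e)$, is stronger than the hypothesis $\ncamp{X_{\Xscr_s}}_{R_{s-1}}<\delta_s/e$ required by the Lemma (since $e<1+e$), so the Lemma applies; in particular the constant $c_s=\frac{e}{\delta_s}\ncamp{X_{\Xscr_s}}_{R_{s-1}}$ appearing in \eqref{e.defr0} satisfies $c_s<\frac{e}{1+e}<1$, hence $\frac1{1-c_s}$ is well-defined and positive, and moreover $\frac1{1-c_s}<1+e$ because $1-c_s>1-\frac{e}{1+e}=\frac1{1+e}$.

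Next, I would take $t=1$ in \eqref{e.defr0} and specialize to $z\in B_{R_s}$, i.e. $\norm{z}\leq R_s$; the Lemma gives $\norm{\T_s(z)-z}\leq\frac1{1-c_s}\ncamp{X_{\Xscr_s}}_1\norm{z}^{2s+1}$. To turn this into the stated bound I would estimate the right-hand side: using $\norm{z}\leq R_s\leq R_{s-1}$ and the homogeneity of the polynomial vector field $X_{\Xscr_s}$ (degree $2s+1$, so that $\ncamp{X_{\Xscr_s}}_{R_{s-1}}=R_{s-1}^{2s+1}\ncamp{X_{\Xscr_s}}_1$ up to the scaling built into the polynomial norm \eqref{e.polinorm}), one gets $\ncamp{X_{\Xscr_s}}_1\norm{z}^{2s+1}\leq\ncamp{X_{\Xscr_s}}_{R_{s-1}}$. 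Combined with $\frac1{1-c_s}<1+e$, this yields $\norm{\T_s(z)-z}\leq(1+e)\ncamp{X_{\Xscr_s}}_{R_{s-1}}$, which is the first inequality in \eqref{e.def.chis}; the second inequality $(1+e)\ncamp{X_{\Xscr_s}}_{R_{s-1}}\leq\delta_s$ is then just a restatement of the hypothesis \eqref{e.sm.Xchis}.

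Finally, the inclusion $\T_s:B_{R_s}\to B_{R_{s-1}}$ follows by the triangle inequality: for $z\in B_{R_s}$ we have $\norm{\T_s(z)}\leq\norm{z}+\norm{\T_s(z)-z}\leq R_s+\delta_s=R_{s-1}$, recalling the definition $R_s:=R_{s-1}-\delta_s$. I do not expect a genuine obstacle here; the only point requiring a little care is the bookkeeping of the polynomial-norm scaling when passing from $\ncamp{X_{\Xscr_s}}_1$ and $\norm{z}^{2s+1}$ to $\ncamp{X_{\Xscr_s}}_{R_{s-1}}$, and checking that the hypothesis $\delta_s<R_{s-1}$ guarantees $R_s>0$ so that $B_{R_s}$ is non-trivial. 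All the analytic content is already packaged in Lemma~\ref{l.6}, so the corollary is essentially an arithmetic tightening of its conclusion under the slightly stronger smallness assumption.
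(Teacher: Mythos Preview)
Your proof is correct and follows essentially the same approach as the paper's own proof: both apply Lemma~\ref{l.6}, use the homogeneity scaling $\ncamp{X_{\Xscr_s}}_1\norm{z}^{2s+1}\leq\ncamp{X_{\Xscr_s}}_{R_{s-1}}$ from $\norm{z}\leq R_s<R_{s-1}$, and bound $\frac{1}{1-c_s}\leq 1+e$ via $c_s\leq\frac{e}{1+e}$. Your version is simply more explicit, spelling out the triangle-inequality step for the inclusion $\T_s:B_{R_s}\to B_{R_{s-1}}$ that the paper leaves to the reader.
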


\begin{proof}
Given the control of $z$ (i.e. $\norm{z}\leq R_s < R_{s-1}$),
and the degree of $X_{\Xscr_s}$, one has
\begin{equation*}
\ncamp{X_{\Xscr_s}}_1\norm{z}^{2s+1} <  \ncamp{X_{\Xscr_s}}_{R_{s-1}} \ ;
\end{equation*}
by the definition of $c_s$ in~\eqref{e.defr0} and by
hypothesis~\eqref{e.sm.Xchis}, one has $c_s\leq \frac{e}{e+1}$,
i.e. $\frac1{1-c_s}\leq 1+e$; inserting these inequalities into the
estimate of~\eqref{e.defr0} one gets the thesis.
\end{proof}

%
%

\paragraph{Conclusion of the proof.} 

We now make use of Corollary \ref{c.4} to conclude the proof of
(iv). From \eqref{e.Hamfield} we have
\begin{displaymath}
\ncamp{X_{\Xscr_s}}_{R_{s-1}}\leq \ncamp{X_{\Xscr_s}}_R \leq
\frac{4(s+1)R^{2s+1}C_{\chi_s}}{(1-e^{-\sigma_s})^2}\ ;
\end{displaymath}
we apply \eqref{e.exp.ineq} 
\begin{align*}
1-e^{-\sigma_s}
&> \tond{\frac{\sigma_s}{\sigma_0-\sigma_s}}(1-e^{-(\sigma_0-\sigma_*)})
> \tond{\frac{\sigma_*}{\sigma_0-\sigma_*}}(1-e^{-(\sigma_0-\sigma_*)})\\
1-e^{-\sigma_s} &> \tond{\frac{\sigma_s}{\sigma_0}}(1-e^{-\sigma_0})
> \tond{\frac{\sigma_*}{\sigma_0}}(1-e^{-\sigma_0})
\end{align*}
and inserting $\sigma_*=\sigma_0/4$ we get
\begin{displaymath}
\frac1{(1-e^{-\sigma_s})^2}
< \frac{\sigma_0(\sigma_0-\sigma_*)}{\sigma_*^2(1-e^{-\sigma_0})(1-e^{-(\sigma_0-\sigma_*)})} =
\frac{12}{(1-e^{-\sigma_0})(1-e^{-(\sigma_0-\sigma_*)})}\ ,
\end{displaymath}
thus the smallness condition can be replaced by
\begin{displaymath}
\frac{48(s+1)R^{2s+1}C_{\chi_s}}{(1-e^{-\sigma_0})(1-e^{-(\sigma_0-\sigma_*)})} <
\frac{\delta_s}{1+e}\ .
\end{displaymath}
We recall that
\begin{displaymath}
C_{\chi_s} = \frac1{s\gamma} C_{h_1}C_r^{s-1}\qquad\qquad C_r
= \frac{192r^2 C_{h_1}}{\gamma(1-e^{-\sigma_0})
(1-e^{-(\sigma_0-\sigma_*)})}\ ,
\end{displaymath}
so that the field $X_{\Chi_s}$ fulfills
\begin{equation}
\label{e.bound.chis}
\ncamp{X_{\Xscr_s}}_{R_{s-1}}\leq
\frac{48(s+1)R^{2s+1}C_{\chi_s}}{(1-e^{-\sigma_0})(1-e^{-(\sigma_0-\sigma_*)})}
\leq \frac{R}{2r^2}a_s \ ,
\qquad\text{with}\quad
a_s := \tond{R^2 C_r}^{s} \ .
\end{equation}
Under the smallness condition \eqref{e.R.sm1}, the sequence $a_s$ is
controlled by a geometrically decreasing one, $a_s <
\tond{\frac{2}{3(1+e)}}^s$, so that we can think at $\T_s$ as a
sequence of increasingly smaller deformation of the identity, the
first $\T_1$ being the biggest.

With the choice $d:=\frac{R}3$, condition \eqref{e.sm.Xchis} is
ensured by imposing
\begin{displaymath}
\frac{R}{2 r^2}a_1 < \frac{\delta_s}{1+e} = \frac{\delta_1}{1+e}  =  
\frac{d}{r(1+e)} =\frac{R}{3r(1+e)} \ ,
\end{displaymath}
which is fulfilled provided $a_1 < \frac{2}{3(1+e)}$, which in turn is
\eqref{e.R.sm1}.

To conclude the proof of Theorem~\ref{prop.gen} we still have to prove
the smallness of the deformation of the domain $B_{\frac23 R}$. This
is obtained summing up all the (geometrically decreasing) deformations
\eqref{e.def.chis} of the iteratively defined domains $B_s$. Indeed,
we exploit \eqref{e.bound.chis} to collect all the $r$ deformations
\begin{displaymath}
\norm{T_\Chi(z)-z}\leq \frac{(1+e)R}{2r^2}\quadr{\sum_{s=1}^r a_s}
\leq \frac{(1+e)R^3 C_r}{r^2}< R^3 4^4 C_*\ .
\end{displaymath}

%
%

\subsection{Proof of Corollary \ref{c.Hom.K.var}}

Since by hypothesis $z(0)\in B_{\frac{R}9}$, the transformed initial
datum lies in ${\tilde z}(0)\in B_{\frac{4}9R}$. Let $\tau$ the escape
time from the ball of radius $\frac23 R$, then for all $|t|<\tau$ it
holds
\begin{align*}
  |H_\Omega({\tilde z}(t))-H_\Omega({\tilde z}(0))| 
\leq
  \int_0^t|\Poi{H_\Omega}{P^{(r+1)}} {\tilde z}(s)|ds 
\leq&
  \norm{X_{H_\Omega}}_{\frac23 R} \norm{X_{P^{(r+1)}}}_{\frac23 R} |t|
=\\=&
  \frac23 \Omega R \norm{X_{P^{(r+1)}}}_{\frac23R} |t|\ .
\end{align*}
In order to deal with the remainder, we apply \eqref{e.rem.r} 
\begin{displaymath}
P^{(r+1)} = \sum_{s\geq r+1} \Ham{r}{s}\ ,\qquad\qquad
\norm{X_{\Ham{r}{s}}}_{\frac23 R} = \ncamp{X_{\Ham{r}{s}}}_1
\tond{\frac23 R}^{2s+1}\ ;
\end{displaymath}
from Theorem~\ref{prop.gen}, (v), and Lemma \ref{lem.Hamfield} we
know that
\begin{displaymath}
\ncamp{X_{\Ham{r}{s}}}_1 \leq \frac{8(s+1)}{(1-e^{-\sigma_*})^2}
C_{h^{(r)}_s}\ ,
\end{displaymath}
hence the Hamiltonian field of the whole remainder $P^{(r+1)}$ fulfills
\begin{displaymath}
\norm{X_{P^{(r+1)}}}_{\frac23 R}\leq \frac{16 C_{h_1}}{(1-e^{-\sigma_*})^2}
R^{2r+3}\tilde C_r^r \quadr{\sum_{h\geq 0}(h+3)\tond{\frac23 R^2 \tilde
    C_r}^h}\ .
\end{displaymath}
From the smallness assumption \eqref{e.R.sm1} we have $R^2 \tilde
C_r<1/(1+e)$, then
\begin{displaymath}
|H_\Omega({\tilde z}(t))-H_\Omega({\tilde z}(0))| \leq \frac{2^5 \Omega
  C_{h_1}}{(1-e^{-\sigma_*})^2} \tond{\frac23 R}^{2r+4} \tilde C_r^r |t|\ ,
\end{displaymath} 
which gives for a suitable $C$
\begin{displaymath}
|H_\Omega({\tilde z}(t))-H_\Omega({\tilde z}(0))| < \Omega R^4\ ,\qquad
|t|\leq \frac{C (1-e^{-\sigma_*})^2}{ C_{h_1}} \tond{\frac23
  R^2 \tilde C_r}^{-r}\ .
\end{displaymath}
The variation in the original coordinates follows from two facts. The
first and main one is that by controlling $H_\Omega$ we are
controlling the escape time from the ball where we started. The second
one is the deformation of the canonical transformation $T_\Chi$, which
according to \eqref{e.def.Tchi} gives
\begin{displaymath}
|H_\Omega({\tilde z})-H_\Omega(z)| \leq \Omega R^4\ .
\end{displaymath}

We proceed in the same way to control the variation of $\Z$. From its
definition in \eqref{e.Ham.r}
\begin{displaymath}
\ncamp{X_{\Z}}_{\frac23 R}\leq \sum_{s=0}^r \ncamp{X_{Z_s}}_{\frac23
  R}\ ;
\end{displaymath}
we make use of Lemma \ref{lem.Hamfield} to get
\begin{displaymath}
\ncamp{X_{Z_0}}_1 \leq \frac{8 C_{\zeta_0}
  \mu}{(1-e^{-\sigma_*})^2}\ ,\qquad\qquad \ncamp{X_{Z_s}}_1 \leq
\frac{8 C_{h_1} C_r^{s-1}}{(1-e^{-\sigma_*})^2}\qquad s=1,\ldots,r\ .
\end{displaymath}
where the factor $\mu$ in the first estimate follows from the fact
that $\zeta_0^{(0)}=0$.
\qed

%
%
%
%
%

\section{Appendix}
\label{s:6}

\subsection{Poisson brackets of cyclically symmetric polynomials}

The following Lemma produces a general estimate of the Poisson bracket
specially adapted to the case of cyclically symmetric polynomials. It
is crucial for the control of the dependence on $N$ of the norms of
``extensive'' functions generated by our perturbation scheme.  Its
proof, and those of the subsequent statements, are collected in the
Appendix of \cite{GioPP13}.

\begin{lemma}[see \cite{GioPP13}]
\label{l.2.2}
Let $f(x,y)$ and $g(x,y)$ be homogeneous polynomials respectively of
degree $r$ and $s$.  Then $\{f,g\}$ is a homogeneous polynomial of
degree $r+s-2$, and one has
\begin{displaymath}
\|\{f,g\}\|_1 \le rs \|f\|_1\, \|g\|_1\ .
\end{displaymath}
Moreover, there exists a seed of $\{f^{\oplus},g^{\oplus}\}$ such that
one has
\begin{equation}
\label{e.poisson.1}
\bigl\|\{f^{\oplus},g^{\oplus}\}\bigr\|_1^{\oplus} \le 
 rs \bigl\|f^{\oplus}\bigr\|_1^{\oplus}\, \bigl\|g^{\oplus}\bigr\|_1^{\oplus}.
\end{equation}
\end{lemma}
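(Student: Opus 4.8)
The plan is to prove the two estimates in Lemma~\ref{l.2.2} in sequence, the scalar (homogeneous polynomial) bound first, and then the cyclic-norm bound by reducing it to a judicious application of the first bound together with property~4 of the first Lemma on seeds of Poisson brackets.

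\textbf{First estimate.}  Write $f = \sum_{|j|+|k|=r} f_{j,k}\, x^jy^k$ and $g = \sum_{|p|+|q|=s} g_{p,q}\, x^py^q$. By bilinearity of the Poisson bracket it suffices to bound $\{x^jy^k, x^py^q\}_1$ for each pair of monomials and then sum with the coefficients $|f_{j,k}|\,|g_{p,q}|$ outside. For a single pair, I would expand the Poisson bracket as $\sum_{l=1}^N \left(\derp{(x^jy^k)}{x_l}\derp{(x^py^q)}{y_l} - \derp{(x^jy^k)}{y_l}\derp{(x^py^q)}{x_l}\right)$; each term in the difference is again a monomial of degree $r+s-2$ with coefficient $\pm j_l k_l$-type factors (more precisely $j_l q_l$ or $k_l p_l$), and each such monomial evaluated in the polynomial norm~\eqref{e.polinorm} picks up $R^{r+s-2}$. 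Using $\sum_l j_l \le |j|\le r$ and $\sum_l q_l\le s$ (and similarly for the other term) one gets the termwise bound $\|\{x^jy^k,x^py^q\}\|_1 \le rs\,\|x^jy^k\|_1\,\|x^py^q\|_1$ (recalling $\|x^jy^k\|_1=1$). Summing against the coefficients gives $\|\{f,g\}\|_1\le rs\,\|f\|_1\,\|g\|_1$. The only mild subtlety is bookkeeping the combinatorial factors so that the crude bounds $\sum_l j_l\le r$ etc. are actually what is needed; this is routine.

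\textbf{Second estimate.}  By property~4 of the first Lemma, a seed of $\{f^\oplus,g^\oplus\}$ is $h=\{f, g^\oplus\}=\sum_{l=1}^N\{f,\tau^l g\}$. Now $\{f^\oplus,g^\oplus\}^\oplus = \{f^\oplus,g^\oplus\}$, so $\bigl\|\{f^\oplus,g^\oplus\}\bigr\|_1^\oplus = \|h\|_1$ for this choice of seed (by definition~\eqref{e.norm-germ}). Here is where I expect the main obstacle: a naive triangle inequality $\|h\|_1\le\sum_{l=1}^N\|\{f,\tau^l g\}\|_1$ would lose a factor $N$, since by the first estimate and the $\tau$-invariance of $\|\cdot\|_1$ each summand is bounded by $rs\,\|f\|_1\,\|g\|_1$. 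The point is that one must instead exploit the locality: $\{f,\tau^l g\}$ is \emph{nonzero only for the finitely many} shifts $l$ for which the support of $\tau^l g$ overlaps that of $f$, and the number of such $l$ is controlled not by $N$ but by $r+s$ (roughly, the sum of the interaction diameters of $f$ and $g$). A cleaner route, and the one I would actually follow since it avoids tracking supports explicitly, is to regroup: write $\{f,g^\oplus\} = \{f,g\}^{\oplus}_{\mathrm{partial}}$ is not literally cyclically summed, but observe that $\sum_l \{f,\tau^l g\} = \sum_l \tau^{-l}\{\tau^l f, g\}$ is, up to the outer $\tau$-powers, a cyclic sum of $\{\tau^l f,g\}$; more directly, I would go back to the monomial expansion: $h = \sum_{j,k,p,q} f_{j,k}\,g_{p,q}\,\{x^jy^k,(x^py^q)^\oplus\}$, and for fixed monomials note that $\{x^jy^k,(x^py^q)^\oplus\}=\sum_l\{x^jy^k,\tau^l(x^py^q)\}$ has, for each term produced by a single derivative pairing at site $\ell$, exactly one nonzero contribution per choice of which occupied site of $x^py^q$ is shifted onto an occupied site of $x^jy^k$ — and the count of those is $\le$ (number of variables in $x^jy^k$) $\le r$, while the derivative factors again sum to $\le s$. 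This yields $\|\{x^jy^k,(x^py^q)^\oplus\}\|_1 \le rs$ with \emph{no} factor of $N$, and summing against $|f_{j,k}|\,|g_{p,q}|$ gives $\|h\|_1\le rs\,\|f\|_1\,\|g\|_1 = rs\,\bigl\|f^\oplus\bigr\|_1^\oplus\,\bigl\|g^\oplus\bigr\|_1^\oplus$, which is~\eqref{e.poisson.1}.

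\textbf{Summary of the obstacle.}  The scalar bound is essentially a direct computation. The real content of the lemma is that in the cyclic-norm version the Poisson bracket does \emph{not} cost a factor of $N$, even though $g^\oplus$ is a sum of $N$ shifted copies of $g$: this is precisely the ``locality'' phenomenon, namely that $\{f,\tau^l g\}$ vanishes for all but $O(r+s)$ values of $l$. I would make this rigorous at the level of monomials as sketched above, since that simultaneously produces the nonzero-term count and the combinatorial weights, avoiding a separate support-diameter argument. The degree statement ($r+s-2$) and the well-definedness of the seed are immediate from property~4 of the first Lemma and the definition of the Poisson bracket.
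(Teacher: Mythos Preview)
Your proposal is correct. The paper itself does not prove this lemma in the body of the text---it defers to the appendix of \cite{GioPP13}---so there is no in-paper argument to compare against line by line, but your approach (monomial expansion for the scalar bound, then the seed $h=\{f,g^\oplus\}$ together with the observation that summing over shifts $l$ only accumulates $|j||q|+|k||p|\le rs$ rather than a factor of $N$) is exactly the standard route and is the one taken in the cited reference. One small tightening: in the second part your verbal count ``$\le r$ choices, derivative factors sum to $\le s$'' is a little loose; the clean computation is $\sum_l\sum_m j_m(\tau^l q)_m=|j|\,|q|$ and similarly $|k|\,|p|$, after which $|j||q|+|k||p|\le rs$ follows from $(r-|j|)(s-|q|)+|j|(s-|q|)+\ldots$ or simply $rs-|j||q|-|k||p|=(r-|j|)(s-|p|)+|j||p|-|k||p|+\ldots$, most directly by noting $rs-|j||q|-|k||p|=|j||p|+|k||q|\ge0$.
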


The next statements provide the basic estimates for controlling 
the exponential decay.

\begin{lemma}[see \cite{GioPP13}]
\label{lem.poisson}
Let $F,\,G$ be cyclically symmetric homogeneous polynomials of degree
$r',r''$ respectively.  Let the seeds $f,\,g$ be of class
$\Dscr(C_f,\sigma')$ and $\Dscr(C_g,\sigma'')$, respectively, and let
$\sigma\lt\min(\sigma',\sigma'')$.  Then there exists $C_h\ge 0$ such
that the seed $h$ of $H=\Poi{F}{G}$ is of class $\Dscr(C_h,\sigma)$.
An explicit estimate is
\begin{equation}
\label{dcdm.11b}
C_h = \frac{r' r'' C_f
  C_g}{(1-e^{-\max(\sigma',\sigma'')})(1-e^{-\max(\sigma',\sigma'')+\sigma})}\ .
\end{equation}
\end{lemma}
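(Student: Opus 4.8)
The plan is to reduce the claimed exponential-decay estimate on the seed $h$ of $H=\Poi{F}{G}$ to a double application of the quantitative Poisson-bracket bound \eqref{e.poisson.1}, organized according to the decompositions $f=\sum_{m\ge0}f^{(m)}$ and $g=\sum_{n\ge0}g^{(n)}$ coming from the class conditions $f\in\Dscr(C_f,\sigma')$ and $g\in\Dscr(C_g,\sigma'')$. First I would pick, as the natural seed of $H$, the one provided by property~4 of the first Lemma (see \cite{GioPP13}), namely $h=\Poi{f}{G}=\sum_{m,n}\Poi{f^{(m)}}{\tau^{\bullet}g^{(n)}}$ summed over the $N$ translates; equivalently, working directly with the seed-level bracket \eqref{e.poisson.1}, I would write $h$ as a (finite) sum of contributions indexed by the interaction distances $m$ of the pieces of $f$ and $n$ of the pieces of $g$.

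Next comes the grading step: I would group terms of $h$ by the interaction distance $\ell$ of the resulting monomials. The key geometric observation is that when one forms $\Poi{f^{(m)}}{g^{(n)}}$ (after aligning the supports appropriately), the support of any surviving monomial has diameter at most $m+n$; hence the piece $h^{(\ell)}$ of $h$ with $\ell(h^{(\ell)})\le\ell$ collects only those $(m,n)$ with $m+n\le \ell$ — this is exactly the place where one must be slightly careful about left-alignment of the seeds, choosing for each $\Poi{f^{(m)}}{g^{(n)}}$ a suitably translated (left-aligned) representative so that the whole $h^{(\ell)}$ is left aligned as required by \eqref{e.decomp}. Then \eqref{e.poisson.1} together with the hypotheses gives, for the $\|\cdot\|_1$ norm,
\begin{displaymath}
\norm{h^{(\ell)}}_1 \le \sum_{m+n=\ell} r' r'' \norm{f^{(m)}}_1 \norm{g^{(n)}}_1
\le r' r'' C_f C_g \sum_{m+n=\ell} e^{-\sigma' m} e^{-\sigma'' n}\ .
\end{displaymath}

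The last step is the elementary summation. Bounding $e^{-\sigma' m}e^{-\sigma'' n}\le e^{-\max(\sigma',\sigma'')(m+n)}\cdot e^{-\min(\sigma',\sigma'')\cdot 0}$ is too crude; instead I would write $\sigma' m+\sigma'' n \ge \sigma(m+n) + (\max(\sigma',\sigma'')-\sigma)\max(m,n)$ (using that at least one of $m,n$ equals $\max(m,n)$ and the corresponding rate is $\ge\max(\sigma',\sigma'')$), so that $\sum_{m+n=\ell} e^{-\sigma' m-\sigma'' n} \le e^{-\sigma\ell}\sum_{k\ge0} e^{-(\max(\sigma',\sigma'')-\sigma)k}\cdot(\text{number of }(m,n)\text{ with }\max=k)$, and the geometric series in $k$ converges precisely because $\sigma<\min(\sigma',\sigma'')\le\max(\sigma',\sigma'')$, producing the factor $1/(1-e^{-(\max(\sigma',\sigma'')-\sigma)})$; a second, already-present geometric factor $1/(1-e^{-\max(\sigma',\sigma'')})$ absorbs the remaining combinatorial count of pairs. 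This yields $\norm{h^{(\ell)}}_1\le C_h e^{-\sigma\ell}$ with $C_h$ as in \eqref{dcdm.11b}, i.e. $h\in\Dscr(C_h,\sigma)$. The only genuinely delicate point is the alignment bookkeeping in the grading step — making sure each $h^{(\ell)}$ can simultaneously be taken left aligned and satisfy the diameter bound $m+n$ — but this is handled exactly as in the construction of left-aligned seeds described after \eqref{e.supp}; everything else is the bilinear estimate \eqref{e.poisson.1} plus the summation above.
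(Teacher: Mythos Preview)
Your overall strategy --- decompose the seeds $f=\sum_m f^{(m)}$ and $g=\sum_n g^{(n)}$, apply the bilinear Poisson estimate to each pair, group the resulting pieces by interaction distance $m+n$, then sum --- is the right one and matches the argument in \cite{GioPP13}. The alignment issue you flag is genuine but resolvable: the derivative-based proof underlying \eqref{e.poisson.1} actually bounds $\sum_l \norm{\{f^{(m)},\tau^l g^{(n)}\}}_1$ itself by $r'r''\norm{f^{(m)}}_1\norm{g^{(n)}}_1$ (exchange the sum over $l$ with the sum over sites $i$ and use $\sum_l\norm{\partial_{y_i}\tau^l g^{(n)}}_1=\sum_j\norm{\partial_{y_j}g^{(n)}}_1$), and this sum of norms dominates the norm of any left-aligned rearrangement. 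So your intermediate bound $\norm{h^{(\ell)}}_1\le r'r''C_fC_g\sum_{m+n=\ell}e^{-\sigma' m-\sigma'' n}$ is justified.

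The summation step, however, contains a concrete error. Your inequality
\begin{displaymath}
\sigma' m+\sigma'' n\ \ge\ \sigma(m+n)+(\max(\sigma',\sigma'')-\sigma)\max(m,n)
\end{displaymath}
is false: take $\sigma'=2$, $\sigma''=1$, $\sigma=\tfrac12$, $m=0$, $n=1$; the left side is $1$ and the right side is $2$. The parenthetical justification conflates which of $m,n$ is larger with which of $\sigma',\sigma''$ is larger --- these are unrelated. The repair is actually simpler than what you attempted. Assume without loss of generality $\sigma'=\max(\sigma',\sigma'')$; since $\sigma''>\sigma$, just bound $e^{-\sigma'' n}\le e^{-\sigma n}$ and sum the surviving geometric series in $m$:
\begin{displaymath}
\sum_{m+n=\ell}e^{-\sigma' m-\sigma'' n}
\ \le\ e^{-\sigma\ell}\sum_{m\ge0}e^{-(\sigma'-\sigma)m}
\ =\ \frac{e^{-\sigma\ell}}{1-e^{-\max(\sigma',\sigma'')+\sigma}}\ .
\end{displaymath}
This already gives $h\in\Dscr(C_h,\sigma)$ with a constant that is in fact \emph{smaller} than \eqref{dcdm.11b} --- the extra factor $(1-e^{-\max(\sigma',\sigma'')})^{-1}$ you were trying to manufacture is not needed, and \eqref{dcdm.11b} follows a fortiori.
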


\begin{corollary}[see \cite{GioPP13}]
\label{cor.poisson}
If in lemma~\ref{lem.poisson} we have $\sigma'\neq\sigma''$ then we
may set $\sigma=\min(\sigma',\sigma'')$ and
\begin{equation*}
C_h = \frac{r' r'' C_f
  C_g}{(1-e^{-\max(\sigma',\sigma'')})(1-e^{-|\sigma'-\sigma''|})}\ .
\end{equation*}
\end{corollary}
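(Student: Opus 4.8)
The plan is to obtain Corollary~\ref{cor.poisson} as the boundary case of Lemma~\ref{lem.poisson}, by letting $\sigma$ tend to $\min(\sigma',\sigma'')$ from below and exploiting the fact that, when $\sigma'\neq\sigma''$, the constant in \eqref{dcdm.11b} stays finite in that limit. To set up, I would fix once and for all the seed $h=\Poi{f}{g^\oplus}$ of $H=\Poi{F}{G}$ together with its decomposition $h=\sum_{m\ge 0}h^{(m)}$ as in~\eqref{e.decomp}: these are objects attached to the polynomial $H$ and, crucially, do not depend on $\sigma$ — only the estimate on the norms of the pieces does.

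By Lemma~\ref{lem.poisson}, for \emph{every} $\sigma\in(0,\min(\sigma',\sigma''))$ this same $h$ is of class $\Dscr(C_h(\sigma),\sigma)$ with
\begin{displaymath}
C_h(\sigma)=\frac{r'r''C_fC_g}{(1-e^{-\max(\sigma',\sigma'')})(1-e^{-\max(\sigma',\sigma'')+\sigma})}\ ,
\end{displaymath}
that is, $e^{\sigma m}\norm{h^{(m)}}_1\le C_h(\sigma)$ for all $m\ge 0$. Now I would fix $m$ and let $\sigma\uparrow\min(\sigma',\sigma'')$ in this inequality. The left-hand side converges to $e^{\min(\sigma',\sigma'')m}\norm{h^{(m)}}_1$; on the right-hand side the exponent $\max(\sigma',\sigma'')-\sigma$ converges to $\max(\sigma',\sigma'')-\min(\sigma',\sigma'')=|\sigma'-\sigma''|$, which is strictly positive precisely because $\sigma'\neq\sigma''$, so the denominator does not degenerate and $C_h(\sigma)$ converges to
\begin{displaymath}
C_h:=\frac{r'r''C_fC_g}{(1-e^{-\max(\sigma',\sigma'')})(1-e^{-|\sigma'-\sigma''|})}\ ,
\end{displaymath}
which is finite and is exactly the constant claimed. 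Since both sides of $e^{\sigma m}\norm{h^{(m)}}_1\le C_h(\sigma)$ are continuous in $\sigma$ at $\sigma=\min(\sigma',\sigma'')$, passing to the limit yields $\norm{h^{(m)}}_1\le C_h e^{-\min(\sigma',\sigma'')m}$ for every $m$, i.e. $h\in\Dscr\bigl(C_h,\min(\sigma',\sigma'')\bigr)$, which is the statement.

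The only point deserving care on this route is the assertion that the splitting $h=\sum_m h^{(m)}$ provided by (the proof of) Lemma~\ref{lem.poisson} is genuinely $\sigma$-independent, so that a single polynomial, decomposed once and for all into its interaction-distance components, satisfies the whole one-parameter family of bounds and the limit can be taken componentwise. Alternatively, one may retrace the proof of Lemma~\ref{lem.poisson} directly: there the hypothesis $\sigma<\min(\sigma',\sigma'')$ is used only to make a geometric-type sum converge, the summand carrying the factors $e^{-(\sigma'-\sigma)m}$ and $e^{-(\sigma''-\sigma)n}$ coming from $\Dscr(C_f,\sigma')$ and $\Dscr(C_g,\sigma'')$ together with a weight $\le m+n$ coming from the interaction distance of $\Poi{f^{(m)}}{\tau^l g^{(n)}}$. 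Taking $\sigma=\min(\sigma',\sigma'')$ trivializes one of the two exponentials, and the main obstacle then is to check, by a careful count of how the interaction distance of $\Poi{f^{(m)}}{\tau^l g^{(n)}}$ varies with the shift $l$, that the surviving undamped summation is still geometric, with ratio $e^{-|\sigma'-\sigma''|}$, thereby reproducing the factor $(1-e^{-|\sigma'-\sigma''|})^{-1}$ in $C_h$.
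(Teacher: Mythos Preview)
Your argument is correct. The paper does not actually prove this corollary here but refers to \cite{GioPP13}; nonetheless your limiting argument is exactly the natural way to pass from Lemma~\ref{lem.poisson} to the boundary case $\sigma=\min(\sigma',\sigma'')$, and you correctly isolate the one nontrivial point, namely that the seed $h$ and its interaction-distance decomposition $h=\sum_m h^{(m)}$ are fixed polynomial data independent of $\sigma$, so the family of inequalities $\norm{h^{(m)}}_1\le C_h(\sigma)e^{-\sigma m}$ can be passed to the limit componentwise. Your alternative route (rerunning the proof of Lemma~\ref{lem.poisson} with $\sigma=\min(\sigma',\sigma'')$ and observing that only one geometric factor survives, with ratio $e^{-|\sigma'-\sigma''|}$) is also valid and is in fact closer in spirit to how such corollaries are typically stated in \cite{GioPP13}: one simply notes that the estimate \eqref{dcdm.11b} remains meaningful at $\sigma=\min(\sigma',\sigma'')$ precisely because $\max(\sigma',\sigma'')-\min(\sigma',\sigma'')=|\sigma'-\sigma''|>0$.
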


\begin{corollary}[see \cite{GioPP13}]
\label{cor.pois.Z0}
If in lemma~\ref{lem.poisson} we have $\sigma'\gt\sigma''$  and
$f^{(0)}=0$, i.e., $f=\sum_{m\ge 1}f^{(m)}=O(e^{-\sigma'})$ then we
may set $\sigma=\sigma''$ and
\begin{equation}
\label{dcdm.10a}
C_h = \frac{2e^{-(\sigma'-\sigma'')}r' r'' C_f
  C_g}{(1-e^{-\sigma'})(1-e^{-(\sigma'-\sigma'')})}\ .
\end{equation}
\end{corollary}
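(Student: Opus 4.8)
The plan is to re-run the proof of Lemma~\ref{lem.poisson} while keeping track of the extra hypothesis $f^{(0)}=0$; this is what will allow the output decay rate to be pushed all the way up to $\sigma''$, instead of to any $\sigma<\sigma''$ as in Lemma~\ref{lem.poisson} and Corollary~\ref{cor.poisson}. First I would fix the candidate seed: recall that $\Poi{f}{g^{\oplus}}$ is a seed of $H=\Poi{f^{\oplus}}{g^{\oplus}}$, and write it as $h=\Poi{f}{g^{\oplus}}=\sum_{l=1}^{N}\Poi{f}{\tau^{l}g}$; by short range only the indices $l$ with $S(f)\cap S(\tau^{l}g)\neq\emptyset$ contribute, since a Poisson bracket of monomials with disjoint supports vanishes, and this set of $l$'s is finite and bounded independently of $N$. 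Using the decomposition \eqref{e.decomp} I would then write $f=\sum_{p\ge1}f^{(p)}$ and $g=\sum_{q\ge0}g^{(q)}$ with all summands left aligned, so that $h=\sum_{p\ge1}\sum_{q\ge0}\sum_{l:\,S(f^{(p)})\cap S(\tau^{l}g^{(q)})\neq\emptyset}\Poi{f^{(p)}}{\tau^{l}g^{(q)}}$.

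For each pair $(p,q)$ three facts are available. The window of relevant $l$ has length at most $p+q+1$. Each bracket $\Poi{f^{(p)}}{\tau^{l}g^{(q)}}$ is a homogeneous polynomial of degree $r'+r''-2$ with $\norm{\Poi{f^{(p)}}{\tau^{l}g^{(q)}}}_{1}\le r'r''\norm{f^{(p)}}_{1}\norm{g^{(q)}}_{1}$, by Lemma~\ref{l.2.2} together with the $\tau$--invariance of $\norm{\cdot}_{1}$. And all its monomials have interaction distance at most $p+q$, since the support of the bracket is contained in $S(f^{(p)})\cup S(\tau^{l}g^{(q)})$, a union of two overlapping sets of sizes $p$ and $q$.

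The technical heart is the bookkeeping of the distance-$m$ component. Left aligning every bracket and collecting by interaction distance, $h=\sum_{m\ge0}h^{(m)}$, one shows — this is exactly the combinatorial estimate carried out in the Appendix of \cite{GioPP13} — that a pair $(p,q)$ can feed $h^{(m)}$ only when $q\le m\le p+q$ (the shorter factor $g^{(q)}$ must fit inside a window of width $m$, and the combined distance cannot exceed $p+q$), and that for fixed $(p,q)$ and $m$ the distance-$m$ part of $\sum_{l}\Poi{f^{(p)}}{\tau^{l}g^{(q)}}$ is accounted for, up to the factor $r'r''$, by a single occurrence of $\norm{f^{(p)}}_{1}\norm{g^{(q)}}_{1}$, with no weight growing with $p$, $q$ or $m$. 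Inserting $\norm{f^{(p)}}_{1}\le C_{f}e^{-\sigma'p}$, $\norm{g^{(q)}}_{1}\le C_{g}e^{-\sigma''q}$ and using $f^{(0)}=0$ (so $p\ge1$) this gives
\[
  \norm{h^{(m)}}_{1}\;\le\; r'r''\,C_{f}\,C_{g}\sum_{\substack{q\ge0,\ q\le m\\ p\ge1,\ p\ge m-q}}e^{-\sigma'p}\,e^{-\sigma''q}\,.
\]
Summing over $p\ge\max(1,m-q)$ first yields $\frac{e^{-\sigma'\max(1,m-q)}}{1-e^{-\sigma'}}$; for $0\le q<m$ this equals $\frac{e^{-\sigma''m}}{1-e^{-\sigma'}}e^{-(\sigma'-\sigma'')(m-q)}$, and summing the geometric series in $k=m-q\ge1$ produces the factor $\frac{e^{-(\sigma'-\sigma'')}}{1-e^{-(\sigma'-\sigma'')}}$ — this is precisely where $f^{(0)}=0$ enters, since it forces $p\ge m-q\ge1$ and hence makes the series start at $k=1$, giving both the extra $e^{-(\sigma'-\sigma'')}$ in the numerator and, crucially, a plain (not polynomially weighted) geometric sum. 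The boundary stratum $q=m$ contributes $\frac{e^{-\sigma'}}{1-e^{-\sigma'}}e^{-\sigma''m}$, which is dominated by $\frac{e^{-(\sigma'-\sigma'')}}{1-e^{-(\sigma'-\sigma'')}}e^{-\sigma''m}$; absorbing it into the main term is the source of the factor $2$ in the statement. Collecting, $\norm{h^{(m)}}_{1}\le C_{h}e^{-\sigma''m}$ with $C_{h}$ as in \eqref{dcdm.10a}, i.e. $h\in\Dscr(C_{h},\sigma'')$.

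The hard part is the combinatorial lemma of the third paragraph. Without the hypothesis $f^{(0)}=0$ the stratum $p=0$, $q=m$ survives, and because $\tau^{l}g^{(m)}$ overlaps the single-site support of $f^{(0)}$ for about $m$ values of $l$, that stratum contributes with a weight of order $m\,e^{-\sigma''m}$, which is only $O(e^{-\sigma m})$ for $\sigma<\sigma''$; this is exactly why Lemma~\ref{lem.poisson} and Corollary~\ref{cor.poisson} must take $\sigma$ strictly below $\min(\sigma',\sigma'')$. The assumption $f^{(0)}=0$ removes precisely this borderline stratum, after which every surviving contribution carries a purely exponential weight and the output rate can be taken equal to $\sigma''$. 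The full details of the left‑alignment bookkeeping are in the Appendix of \cite{GioPP13}; for the present statement it is enough to observe that the argument is the one for Lemma~\ref{lem.poisson} with the $p=0$ terms dropped and the resulting geometric series re-summed as above.
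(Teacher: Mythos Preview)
The paper does not actually prove this corollary: as noted at the start of Section~\ref{s:6}, the proofs of Lemma~\ref{lem.poisson} and its two corollaries are deferred to the Appendix of \cite{GioPP13}, and the present paper merely records the statements. So there is no in-paper proof to compare against.

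That said, your reconstruction is consistent with the structure one expects from the argument in \cite{GioPP13} and with the form of the constant in~\eqref{dcdm.10a}: you re-run the Lemma~\ref{lem.poisson} estimate, use $\sigma'>\sigma''$ to make $\max(\sigma',\sigma'')=\sigma'$, and exploit $f^{(0)}=0$ to force $p\ge1$, which turns the borderline contribution (the one responsible for the strict inequality $\sigma<\min(\sigma',\sigma'')$ in Lemma~\ref{lem.poisson}) into a genuine geometric tail starting at $k=1$; this is exactly what produces the extra factor $e^{-(\sigma'-\sigma'')}$ and allows $\sigma=\sigma''$. Your handling of the boundary stratum $q=m$ as a second term dominated by the main one correctly accounts for the factor~$2$.

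One small caution: the combinatorial claim in your third paragraph --- that for fixed $(p,q,m)$ the distance-$m$ part of $\sum_l\Poi{f^{(p)}}{\tau^l g^{(q)}}$ carries \emph{no} weight growing with $p$, $q$ or $m$, and that the constraint is precisely $q\le m\le p+q$ --- is the nontrivial bookkeeping step, and your justification (``the shorter factor $g^{(q)}$ must fit inside a window of width $m$'') is a heuristic rather than a proof; the roles of $p$ and $q$ here are fixed by $\sigma'>\sigma''$, not by which of $p,q$ is smaller. You are right that the details are in \cite{GioPP13}, but be aware that this is the place where a careless argument would pick up an unwanted factor of $p+q+1$ from the number of admissible shifts~$l$, which would spoil the conclusion. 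Since you explicitly delegate this point to \cite{GioPP13}, and the arithmetic downstream of it is correct, the proposal is acceptable as a proof sketch.
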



\subsection{Proof of Proposition \ref{p.field}}
\label{aa:pfield}

\begin{proof}
We need to prove~\eqref{e.est0} both in the cases of euclidean and
supremum norm. We start with the latter, which is easier.
Let us set $\norm{z}=\norm{z}_\infty$.  Due to \eqref{e.seme.campo},
we can write
\begin{displaymath}
|X_j(z)| < |\tau^{j-1}X_1(z)|+|\tau^{j-1}X_{N+1}(z)|\ .
\end{displaymath}
By expanding $X_1$ and $X_{N+1}$ as polynomials, we have
\begin{displaymath}
|X_1(z)|\leq \sum_{k}|X_{1,{k}}||z^{k}| \leq \norm{z}^r\norm{X_1}_1\ ,
\qquad
|X_{N+1}(z)|\leq \norm{z}^r\norm{X_{N+1}}_1\ .
\end{displaymath}
Since, for all $j$, one has $|\tau^{j-1}(z_1^{k_1}\cdots z_{2N}^{k_{2N}})|\leq \norm{z}^r$,
it follows
\begin{displaymath}
|X_j(z)| < \ncamp{X}_1\norm{z}^r\ ,
\end{displaymath}
which gives \eqref{e.est0}.

Let us now set $\norm{z}=\norm{z}_2$.  We are interested in
\begin{displaymath}
\norm{(X(z),Y(z))}^2 = \sum_{l=1}^N\tond{|X_l(z)|^2+|Y_l(z)|^2} =
\sum_{l=1}^N\tond{|\tau^{l-1}X_1(z)|^2+|\tau^{l-1}Y_1(z)|^2}\ .
\end{displaymath}
Since $X_1(z)$ is a polynomial of degree $r$ one has 
$X_1(z) = \sum_{|j|=r}X_{1,j} z^j$,
where $j=(j_1,\ldots,j_n)$ is a $n=2N$ multi-index.
We write
\begin{eqnarray}
\sum_{l=1}^N\big|X_l(z)\big|^2 
= 
\sum_{l=1}^N\Bigg|\sum_{|j|=r}X_{1,j}(\tau^{l-1}z)^j\Bigg|^2 
\label{e.part1}&=&
\sum_{l=1}^N\sum_{|j|=r}X_{1,j}^2(\tau^{l-1}z)^{2j} +
\\
\label{e.part2}&+&
\sum_{l=1}^N\sum_{j\not=h\atop{|j|=|h|=r}}X_{1,j}X_{1,h}(\tau^{l-1}z)^{j+h} \ .\qquad\null
\end{eqnarray}
Let us split the estimates of \eqref{e.part1} and \eqref{e.part2}.  In
the former term we invert the order of the sums over $l$ and $j$;
recalling that $\tau$ acts separately on $x$ and $y$, we have
\begin{align*}
\sum_{l=1}^N&(\tau^{l-1}z)^{2j} = z_1^{2j_1}\cdots z_n^{2j_n} +
(z_2^{2j_1}\cdots z_1^{2j_N})(z_{N+2}^{2j_{N+1}}\cdots
z_{N+1}^{2j_{2N}}) + \ldots 
+ \\
& + (z_N^{2j_1}\cdots
z_{N-1}^{2j_N})(z_{2N}^{2j_{N+1}}\cdots z_{2N-1}^{2j_{2N}}) = 
\sum_{l=1}^N\tond{\tau^{l-1}z^2}^{j} 
< \tond{\sum_{l=1}^n z_l^2}^{|j|} = \norm{z}^{2r}
\ ,
\end{align*}
with a rough (but uniform in $j$) estimate; thus we have
\begin{displaymath}
\sum_{|j|=r}|X_{1,j}|^2\tond{\sum_{l=1}^N|\tau^{l-1}z|^{2j}}\leq
\norm{z}^{2r}\sum_{|j|=r}|X_{1,j}|^2.
\end{displaymath}

Let us now come the second term, \eqref{e.part2}, and rewrite it as
\begin{displaymath}
\sum_{|j|=|h|=r,j\not=h}X_{1,j}X_{1,h}\sum_{l=1}^N(\tau^{l-1}z^{k}),\qquad
j+h = k,\qquad |k|=2r.
\end{displaymath}
The idea is to provide again an estimate like
$\sum_{l=1}^N|\tau^{l-1}z^{k}|\leq \norm{z}^{2r}$; the tricky point is
the possible presence of odd exponent in the multiindex
$k=(k_1,\ldots,k_n)$.  We then first decompose $k$ in its ``even'' and
``odd'' parts
\begin{displaymath}
k = 2k^\natural + k^\sharp,\qquad |k^\natural|\leq r,\qquad |k^\sharp|
= 2(r-|k^\natural|) =: 2s, \qquad k^\sharp_j\in \{0,1\} \ ,
\end{displaymath}
and consequently decompose the monomial $z^k$ as $z^k =
z^{2k^\natural} z^{k^\sharp}$; then we rewrite explicitly
\begin{displaymath}
z^{k^\sharp} = z_{i_1}\cdots z_{i_{2s}},\qquad\qquad
i_1,\dots,i_{2s}\in {\cal J},
\end{displaymath}
where ${\cal J}$ represents the subset of those indexes
$i_l\in\{1,\ldots,n\}$ such that $k^\sharp_{i_l}=1$.  So one has
\begin{eqnarray*}
|z_{i_1}\cdots z_{i_{2s}}| &\leq&
\frac12(z_{i_1}^2+z_{i_{2}}^2)\cdots\frac12(z_{i_{2s-1}}^2+z_{i_{2s}}^2)
= \frac1{2^s}\prod_{m=1}^s\tond{z_{j_{2m-1}}^2 + z_{j_{2m}}^2} = \\
&=& \frac1{2^s}\sum_{\substack{l_m\in\{2m-1,2m\}\\m=1,\ldots,s}}{z^2_{l_1}\cdots
  z^2_{l_s}}\leq \norm{z}^{2 s}\ ,
\end{eqnarray*}
where last sum has exactly $2^s$ elements. The above upper bound holds
also for any translated monomial $|\tau^{l-1}z^{k^\sharp}|$.

From the above considerations we get
\begin{displaymath}
\sum_{l=1}^N|\tau^{l-1}z^{k}| =
\sum_{l=1}^N|\tau^{l-1}z^{2k_\natural}||\tau^{l-1}z^{k_\sharp}| 
\leq \norm{z}^{2 s}\sum_{l=1}^N|\tau^{l-1} z^{2k_\natural}| \leq
\norm{z}^{2r}\ ,
\end{displaymath}
thus collecting the diagonal and off-diagonal elements of
$\left(\ncamp{X_1}_1\right)^2$ we obtain
\begin{displaymath}
\norm{X(z)}^2\leq \norm{z}^{2r}\tond{\sum_{|j|=r}X_{1,j}^2 +
  \sum_{|j|=|h|=r,j\not=h}|X_{1,j}||X_{1,h}|} =
\norm{z}^{2r}\tond{\sum_{|j|=r}|X_{1,j}|}^2.
\end{displaymath}
Using a similar estimate for the component $Y$, we finally get
\begin{equation}
\label{e.est1}
\norm{F(z)}\leq \ncamp{F}_1\norm{z}^r\ ,
\end{equation}
and the thesis follows.
\end{proof}


\subsection{Proof of Lemma \ref{lem.Hamfield}}
\label{aa:Hamfield}

\begin{proof}
We will use \eqref{e.xxx}. By the hypothesis on the seed $f$ we have
$f=\sum_{m\geq 0}f^{(m)}$ with $\norm{f^{(m)}}_1 \leq C_{f}e^{-\sigma
  m}$.  Using both the fact that $f^{(m)}$ depends only on the subsets
$x_0,\ldots,x_m$ and $y_0,\ldots,y_m$, and it decays with $m$, one has
\begin{equation*}
\begin{aligned}
  \sum_{l=0}^{N-1} \norm{\derp{f}{x_l}}_R &\leq
  \sum_{m\geq 0}\sum_{l=0}^{N-1} \norm{\derp{f^{(m)}}{x_l}}_R =
  \sum_{m\geq 0}\sum_{l=0}^{m} \norm{\derp{f^{(m)}}{x_l}}_R \leq
\\
 & \leq  \sum_{m\geq 0}\sum_{l=0}^{m} r R^{r-1}C_f e^{-\sigma m} \leq
  r R^{r-1}\frac{2C_f}{(1-e^{-\sigma})^2}\ ,
\end{aligned}
\end{equation*}
where we also used
\begin{equation*}
\begin{aligned}
  \sum_{m\geq 0}(m+1)e^{-\sigma m} &=
  1 + \sum_{m\geq 1}(m+1)e^{-\sigma m} \leq
  1 + \int_1^{+\infty}(x+1)e^{-\sigma (x-1)}dx =
  \\
  &= \frac{\sigma^2+2\sigma + 1}{\sigma^2}
  \leq \frac2{(1-e^{-\sigma})^2} \ .
\end{aligned}
\end{equation*}
The same calculation holds for derivatives with respect to the $y$ variables.
\end{proof}


\subsection{Lie derivative of a vector field}

\begin{lemma}
\label{l.0.2}
Let $\Chi=\chi^\oplus$ a cyclically symmetric polynomial of degree
$r+1$ and $X_F$ an Hamiltonian vector field of a cyclically symmetric
Hamiltonian $F=f^\oplus$, where $f$ is a polynomial of degree
$s+1$. Then it holds true
\begin{equation}
\label{e.l.0.2}
\norm{L_\Chi X_F(z)}\leq
s\ncamp{X_F}_1\ncamp{X_\Chi}_1\norm{z}^{s+r-1}.
\end{equation}
\end{lemma}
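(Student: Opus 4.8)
The plan is to read off $L_\Chi X_F$ componentwise from the action of $\lie\Chi=\Poi{\Chi}{\cdot}$ on scalar functions, to check that the resulting polynomial vector field is again of the cyclically symmetric form \eqref{e.seme.campo}, and then to combine the pointwise bound of Proposition~\ref{p.field} with a short estimate of the seed norm of $L_\Chi X_F$.

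First I would use that the Lie derivative acts on a vector field component by component, so $(L_\Chi X_F)_i=\Poi{\Chi}{(X_F)_i}$ for $i=1,\dots,2N$. Each component $(X_F)_i$ is a homogeneous polynomial of degree $s$ (since $F$ has degree $s+1$), and by Lemma~\ref{l.seme.campo} it satisfies $(X_F)_i=\tau^{i-1}(X_F)_1$ for $i\le N$ and $(X_F)_{N+i}=\tau^{i-1}(X_F)_{N+1}$. Since $\Chi$ is cyclically symmetric ($\tau^{i-1}\Chi=\Chi$) and $\tau$ is a canonical transformation ($\Poi{\tau g}{\tau h}=\tau\Poi{g}{h}$), this gives
\begin{displaymath}
(L_\Chi X_F)_i = \Poi{\tau^{i-1}\Chi}{\tau^{i-1}(X_F)_1}
  = \tau^{i-1}(L_\Chi X_F)_1 \ , \qquad 1\le i\le N\ ,
\end{displaymath}
and likewise $(L_\Chi X_F)_{N+i}=\tau^{i-1}(L_\Chi X_F)_{N+1}$. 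Hence $L_\Chi X_F$ is a cyclically symmetric polynomial vector field of the form \eqref{e.seme.campo}, homogeneous of degree $s+r-1$ (its components pair the degree-$r$ quantities $(X_\Chi)_l$ with first derivatives of $(X_F)_i$, of degree $s-1$), and $\ncamp{L_\Chi X_F}_R = \norm{(L_\Chi X_F)_1}_R + \norm{(L_\Chi X_F)_{N+1}}_R$.

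Next I would observe that the proof of Proposition~\ref{p.field} uses only the structural identity \eqref{e.seme.campo} of the vector field — not the fact that it is $J\nabla$ of a function — so the very same argument applies to $L_\Chi X_F$ and yields $\norm{L_\Chi X_F(z)}\le\ncamp{L_\Chi X_F}_1\,\norm{z}^{s+r-1}$. It then remains to bound the seed norm. Expanding the Poisson bracket, $\Poi{\Chi}{(X_F)_1}$ is a sum of terms $\pm(X_\Chi)_l\,\partial(X_F)_1/\partial z_l$ over $l=1,\dots,2N$, so by submultiplicativity of the coefficient norm $\norm{\cdot}_1$ (as already used in Lemma~\ref{l.2.2}) one has $\norm{(L_\Chi X_F)_1}_1\le\sum_{l=1}^{2N}\norm{(X_\Chi)_l}_1\,\norm{\partial(X_F)_1/\partial z_l}_1$. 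By \eqref{e.seme.campo} and the $\tau$-invariance of $\norm{\cdot}_1$ one has $\norm{(X_\Chi)_l}_1\le\ncamp{X_\Chi}_1$ for every $l$, while for any homogeneous polynomial $P$ of degree $s$ the Euler-type identity $\sum_{l=1}^{2N}\norm{\partial P/\partial z_l}_1=s\,\norm{P}_1$ holds (sum $\sum_l\alpha_l=s$ over the multi-indices of the monomials of $P$). Taking $P=(X_F)_1$ gives $\norm{(L_\Chi X_F)_1}_1\le s\,\ncamp{X_\Chi}_1\,\norm{(X_F)_1}_1$ and, symmetrically, $\norm{(L_\Chi X_F)_{N+1}}_1\le s\,\ncamp{X_\Chi}_1\,\norm{(X_F)_{N+1}}_1$; summing the two yields $\ncamp{L_\Chi X_F}_1\le s\,\ncamp{X_\Chi}_1\,\ncamp{X_F}_1$, which combined with the pointwise bound above is exactly \eqref{e.l.0.2}.

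The delicate step is the reduction to the form \eqref{e.seme.campo}: this is where the discrete symmetry is genuinely used — through $\tau$-invariance of $\Chi$ and canonicity of $\tau$ — and it is what keeps the factor $N$ out of the estimate; once $L_\Chi X_F$ is known to be cyclically symmetric, the rest is routine bookkeeping with the $\ell^1$ norm and a degree count.
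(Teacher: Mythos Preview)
Your proof is correct, but it follows a different path from the paper's.

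The paper's argument is a one-liner: it invokes the multilinear representation $X_F(z)=\tilde X_F(z,\ldots,z)$ of~\eqref{e.multilin.X}, writes the directional derivative as
\[
L_\Chi X_F(z)=dX_F(z)[X_\Chi(z)]=s\,\tilde X_F\bigl(X_\Chi(z),z,\ldots,z\bigr),
\]
and then applies~\eqref{e.est1} (the pointwise bound of Proposition~\ref{p.field}) to each slot. This is very compact and immediately iterates to Corollary~\ref{c.0}, but it tacitly uses a multilinear version of Proposition~\ref{p.field} --- i.e.\ that $\norm{\tilde X_F(v,z,\ldots,z)}\le\ncamp{X_F}_1\norm{v}\norm{z}^{s-1}$ --- which is not spelled out in the paper.

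Your route instead shows first that $L_\Chi X_F$ again satisfies the structural identity~\eqref{e.seme.campo} (via $\tau$-invariance of $\Chi$ and canonicity of $\tau$), then applies Proposition~\ref{p.field} in its stated diagonal form to $L_\Chi X_F$ itself, and finally bounds the seed norm $\ncamp{L_\Chi X_F}_1$ by direct $\ell^1$ bookkeeping using the Euler identity $\sum_l\norm{\partial P/\partial z_l}_1=s\norm{P}_1$. This is longer but entirely self-contained: it never needs the off-diagonal multilinear bound, and it makes the preservation of the cyclic structure under $L_\Chi$ explicit. A minor point: your componentwise identity $(L_\Chi X_F)_i=\Poi{\Chi}{(X_F)_i}$ is off by a sign (it equals $-\Poi{\Chi}{(X_F)_i}$, since $L_\Chi$ here is the directional derivative $dX_F\cdot X_\Chi$), but this is irrelevant for the norm estimate.
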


\begin{proof}
As already remarked in \eqref{e.multilin.X}, we can interpret $X_F$ as
an $r$-linear operator, hence there exists $\tilde X_F$ such that
$X_F(z) = \tilde X_F(z,\ldots,z)$; we can thus write the Lie
derivative of $X_F$ as
\begin{displaymath}
L_\Chi X_F(z) = dX_F(z)[X_\Chi(z)] = s\tilde
X_F(X_\Chi(z),z,\ldots,z)\ ,
\end{displaymath}
which, using \eqref{e.est1}, gives the thesis.
\end{proof}

\begin{corollary}
\label{c.0}
Under the same hypothesis of Lemma \ref{l.0.2} it holds true
\begin{equation}
\label{e.c.0}
\norm{L_\Chi^p X_F(z)}\leq \prod_{j=0}^{p-1}[s+j(r-1)] \ncamp{X_F}_1
\tond{\ncamp{X_\Chi}_1}^p \norm{z}^{s+p(r-1)}\ .
\end{equation}
\end{corollary}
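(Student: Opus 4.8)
The plan is to reduce the estimate to an induction on $p$ built on two ingredients: a structural remark identifying $L_\Chi^p X_F$ as a cyclically symmetric Hamiltonian vector field, and a version of Lemma~\ref{l.0.2} read off at the level of the seed norm $\ncamp{\,\cdot\,}_1$ rather than pointwise. The case $p=1$ is exactly Lemma~\ref{l.0.2}, so all the work is in the iteration.

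First I would record the structural fact. By the property, recalled in Section~\ref{ss:form}, that the Poisson bracket of two cyclically symmetric functions is cyclically symmetric, together with the classical identity $L_\Chi X_G=\pm X_{\{\Chi,G\}}$ (the sign being immaterial for norm estimates), one sees that if $G=g^{\oplus}$ is a cyclically symmetric homogeneous polynomial of degree $m+1$ then $L_\Chi X_G$ is the Hamiltonian vector field of the cyclically symmetric homogeneous polynomial $\{\Chi,G\}$, of degree $m+r$, hence a polynomial vector field of degree $m+r-1$. Setting $G_0:=F$ and $G_j:=\{\Chi,G_{j-1}\}$, it follows inductively that $L_\Chi^p X_F=\pm X_{G_p}$, with $X_{G_p}$ cyclically symmetric of degree $m_p:=s+p(r-1)$.

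The quantitative core is the seed-norm analogue of Lemma~\ref{l.0.2}: for any cyclically symmetric Hamiltonian vector field $X_G$ of degree $m$ one has
\begin{equation*}
\ncamp{L_\Chi X_G}_1\le m\,\ncamp{X_G}_1\,\ncamp{X_\Chi}_1\ .
\end{equation*}
This is exactly the computation underlying Lemma~\ref{l.0.2}: one writes $L_\Chi X_G(z)=m\,\tilde X_G(X_\Chi(z),z,\ldots,z)$ via multilinearity and then estimates the first and $(N+1)$-th components monomial by monomial, using that every component of $X_\Chi$ has polynomial $1$-norm at most $\ncamp{X_\Chi}_1$, that the single-variable slots have $1$-norm $1$, and that summing the absolute values of the coefficients of the multilinear extension $\tilde X_G$ over all index tuples reproduces the polynomial $1$-norm of the corresponding component of $X_G$. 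Iterating this bound along the chain $G_0,G_1,\ldots,G_p$ and using $\ncamp{X_{G_0}}_1=\ncamp{X_F}_1$ gives
\begin{equation*}
\ncamp{L_\Chi^p X_F}_1=\ncamp{X_{G_p}}_1\le\Bigl(\prod_{j=0}^{p-1}m_j\Bigr)\,\ncamp{X_F}_1\,\bigl(\ncamp{X_\Chi}_1\bigr)^p=\Bigl(\prod_{j=0}^{p-1}[s+j(r-1)]\Bigr)\,\ncamp{X_F}_1\,\bigl(\ncamp{X_\Chi}_1\bigr)^p\ .
\end{equation*}
Since $L_\Chi^p X_F=\pm X_{G_p}$ is a cyclically symmetric Hamiltonian vector field of degree $m_p$, inequality~\eqref{e.est1} then yields $\norm{L_\Chi^p X_F(z)}\le\ncamp{X_{G_p}}_1\norm{z}^{m_p}$, and substituting the last estimate gives precisely \eqref{e.c.0}.

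The one point requiring care is the seed-norm estimate just displayed, which must be obtained directly at the monomial level and cannot be deduced from the pointwise Lemma~\ref{l.0.2}: Proposition~\ref{p.field} only bounds $\norm{X}_{\mathrm{op}}$ by $\ncamp{X}_1$, i.e. in the direction useless for iteration, and recovering a bound on the symmetric multilinear extension of $L_\Chi^{p-1}X_F$ from a pointwise bound via polarization would cost a spurious factor of order $e^{p}$. The monomial argument avoids this loss because the polynomial $1$-norm is submultiplicative under products, so the constants come out to be exactly $\prod_{j=0}^{p-1}[s+j(r-1)]$.
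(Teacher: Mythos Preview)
Your argument is correct. The paper states the corollary without proof, so it treats the iteration as immediate; your write-up makes explicit the one nontrivial point, namely that iterating the pointwise bound of Lemma~\ref{l.0.2} requires the stronger seed-norm inequality $\ncamp{L_\Chi X_G}_1\le m\,\ncamp{X_G}_1\,\ncamp{X_\Chi}_1$, which you derive from the same multilinear computation (writing $(L_\Chi X_G)_j=\sum_\ell\partial_{z_\ell}X_{G,j}\cdot(X_\Chi)_\ell$, using submultiplicativity of $\|\cdot\|_1$, the identity $\sum_\ell\|\partial_{z_\ell}X_{G,j}\|_1=m\|X_{G,j}\|_1$, and the $\tau$-invariance giving $\|(X_\Chi)_\ell\|_1\le\ncamp{X_\Chi}_1$). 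This is exactly the route implicit in the paper's one-line proof of Lemma~\ref{l.0.2} via~\eqref{e.est1}, carried out at the coefficient level so that it can be iterated without the polarization loss you correctly warn about.
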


\begin{corollary}
\label{c.3}
Under the same hypothesis of Lemma \ref{l.0.2}, if $\norm{z}\leq
R-\delta$, it holds true
\begin{equation}
\label{e.c.3.1}
\norm{L_\Chi X_F(z)}\leq
\frac{\ncamp{X_F}_R}{\delta}\ncamp{X_\Chi}_1\norm{z}^r.
\end{equation}
\end{corollary}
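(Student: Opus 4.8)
The plan is to deduce \eqref{e.c.3.1} from Lemma~\ref{l.0.2} by trading a power of $\norm{z}$ for a factor $1/\delta$ via the Cauchy-type inequality \eqref{e.cauchy}, together with the scaling of the polynomial norm. First I would start from the estimate \eqref{e.l.0.2} of Lemma~\ref{l.0.2},
\begin{displaymath}
\norm{L_\Chi X_F(z)}\leq s\ncamp{X_F}_1\ncamp{X_\Chi}_1\norm{z}^{s+r-1}\ ,
\end{displaymath}
and observe that on the right-hand side the field $X_F$ appears only through its seed norm at radius $1$. The idea is to restore an effective radius $R$ in that factor, at the price of a compensating negative power of $R$, and then to absorb the extra power of $\norm{z}$ into $1/\delta$ under the hypothesis $\norm{z}\leq R-\delta$.

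Concretely, the second step is to rewrite $\ncamp{X_F}_1\norm{z}^{s}$ using the homogeneity of the polynomial norm \eqref{e.def1}: since $X_1$ and $X_{N+1}$ are polynomials of degree $s$ in $z$ (recall $f$ has degree $s+1$, so its derivatives have degree $s$), one has $\ncamp{X_F}_R = R^{s}\ncamp{X_F}_1$, hence $\ncamp{X_F}_1\norm{z}^{s} = \ncamp{X_F}_R\tond{\norm{z}/R}^{s}\leq \ncamp{X_F}_R$ because $\norm{z}\leq R-\delta<R$. That already disposes of the factor $\norm{z}^{s}$ and replaces $\ncamp{X_F}_1$ by $\ncamp{X_F}_R$. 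Then I would still have a leftover factor $s\norm{z}^{r-1}$ multiplying $\norm{z}^r$, compared with the target $\norm{z}^r/\delta$; here I would invoke the Cauchy estimate \eqref{e.cauchy}, which for a homogeneous object of degree $s$ and $\norm{z}\leq R-\delta$ gives the standard bound $s(R-\delta)^{s-1}\leq \frac1\delta(R-\delta+\delta)^{\,\cdot}\cdots$, i.e.\ the gain of one derivative costs exactly a factor $1/\delta$ while raising the radius back to $R$. Packaging these two reductions together — one power of $\norm{z}$ converted via homogeneity, one degree converted via the Cauchy inequality — yields
\begin{displaymath}
s\ncamp{X_F}_1\norm{z}^{s+r-1} = s\bigl(\ncamp{X_F}_1\norm{z}^{s-1}\bigr)\norm{z}^{r}
\leq \frac{\ncamp{X_F}_R}{\delta}\,\norm{z}^{r}\ ,
\end{displaymath}
which, multiplied by the remaining factor $\ncamp{X_\Chi}_1$, is precisely \eqref{e.c.3.1}.

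The only genuinely delicate point is bookkeeping the exact powers: one must be careful that $X_F$ has degree $s$ (not $s+1$), so that the relevant Cauchy/homogeneity manipulation is applied with the right exponent, and that the numerical constant comes out to be exactly $1$ as claimed (the statement has no extra combinatorial prefactor). This is the same kind of elementary estimate already used twice inside the proof of Lemma~\ref{l.6} (see the display following \eqref{e.l6.1}), so I would simply mirror that computation: apply \eqref{e.cauchy} once to pass from degree $s-1$ at radius $R-\delta$ to degree $s-1$ at radius $R$ with a factor $1/\delta$, and use $\norm{z}\le R$ to kill the residual power of $\norm{z}$. Everything else is routine, and no step should present a real obstacle.
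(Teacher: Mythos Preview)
Your final display is exactly the paper's argument: split $\norm{z}^{s+r-1}=\norm{z}^{s-1}\norm{z}^r$ and apply \eqref{e.cauchy} together with the homogeneity $\ncamp{X_F}_R=R^s\ncamp{X_F}_1$ to obtain $s\ncamp{X_F}_1\norm{z}^{s-1}\le s\ncamp{X_F}_1(R-\delta)^{s-1}<\frac1\delta\ncamp{X_F}_R$. Your prose preceding the display is slightly muddled (after absorbing $\norm{z}^s$ into $\ncamp{X_F}_R$ via homogeneity you would be left with $s\norm{z}^{r-1}$, not ``$s\norm{z}^{r-1}$ multiplying $\norm{z}^r$'', so that two-step description does not parse), but the displayed inequality is the correct one-step argument and matches the paper.
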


\begin{proof}
  Splitting the term $\norm{z}^{r+s-1}$ of \eqref{e.l.0.2} in
  $\norm{z}^{s-1}\norm{z}^r$, and using\footnote{defining
    $x=\delta/R<1$, we may rewrite \eqref{e.cauchy} as
    $g_r(x):=rx(1-x)^{r-1}<1$, which is true since for $x\in[0,1]$ we
    have $g_r(x)\leq g_r(1/r)=\tond{1-\frac1r}^{r-1}<1$.  }
\begin{equation}
\label{e.cauchy}
s(R-\delta)^{s-1} < \frac1\delta R^s
\end{equation}
in \eqref{e.l.0.2} we get the thesis.
\end{proof}


\subsection{Proof of Lemma \ref{l.H1.sym.exp}}

Let us write\footnote{{If $2/N$ one has to set $(\delta q)_{N/2} =
    q_{N/2}$.}}
\begin{displaymath}
x_0 = a_0q_0 + \sum_{m=1}^{[N/2]}a_m (\delta q)_m \ ,
\qquad
(\delta q)_m:=(q_m+ q_{N-m}) \ ,
\qquad
a_m := A_{1,m}^{-1/4} \ ,
\end{displaymath}
so that $S\tond{(\delta q)_m}\subset [0,\ldots,m]\cup[N-m,\ldots,N]$.
Then set ${\A}:= a_0q_0$ and ${\B}:=x_0 -{\A}$. Thus we expand the
seed $x_0^4$ as $ ({\A}+{\B})^4 = {\A}^4 + 4{\A}^3{\B} +6{\A}^2{\B}^2
+ 4{\A}{\B}^3 + {\B}^4$, and we deal separately with the five terms of
the expansion of the seed. We list below which monomials extracted
from $x_0^4$, are going to compose the element $h_1^{(m)}$.  We assume
$N$ odd, the even case follows almost identically.  When $m=0$ we
plainly have $h_1^{(0)} = {\A}^4 = a_0^4q_0^4$. For $m=1,\ldots,[N/2]$
we have:

\begin{description}
\item[${\A}^3{\B}$ {\bf term:}] since
\begin{displaymath}
{\A}^3{\B} = \sum_{m=1}^{[N/2]} a_0^3a_m q_0^3(\delta q)_m\ ,
\end{displaymath}
with
\begin{displaymath}
S\tond{q_0^3(\delta q)_m}\subset [0,\ldots,m]\cup[N-m,\ldots,N]\ ,
\end{displaymath}
we take only $a_0^3a_mq_0^3(\delta q)_m$. It gives $|a_0^3a_m| =
\mathcal{O}\tond{e^{-\sigma_0 m}}$;

\item[${\A}^2{\B}^2$ {\bf term:}] we take
\begin{displaymath}
a_0^2q_0^2a_m(\delta q)_m\quadr{a_m(\delta q)_m + 2\sum_{1\leq
    i<m}a_i(\delta q)_i}\ ;
\end{displaymath}
by using the additional decay of $|a_i|$, it gives
$a_0^2a_m^2 + 2\sum_{1\leq i<m}|a_0^2a_ia_m| =
\mathcal{O}\tond{e^{-\sigma_0 m}}$;

\item[${\A}{\B}^3$ {\bf term:}] we take
\begin{align*}
a_0a_m^3 q_0(\delta q)_m^3 &+ 3 a_0 q_0\quadr{\sum_{i<m}a_ia_m^2 (\delta
  q)_i (\delta q)_m^2 + \sum_{i<m}a_i^2a_m (\delta q)_i^2 (\delta
  q)_m} +\\ &+6 a_0 q_0\quadr{\sum_{i<j<m} a_ia_ja_m (\delta q)_i (\delta q)_j
  (\delta q)_m}\ ;
\end{align*}
by using the additional decay of $|a_i|$ and $|a_j|$ it gives
$|a_0a_m^3| + 3\sum_{i<l}|a_0a_ia_m^2|+3\sum_{i<l}|a_0a_i^2a_m|
+6\sum_{i<j<l}|a_0a_ia_ja_m| = \mathcal{O}\tond{e^{-\sigma_0 m}}$;

\item[${\A}^4$ {\bf term:}] we take
\begin{align*}
a_m^4 (\delta q)_m^4 &+ 6\sum_{i<l}a_i^2a_m^2 (\delta q)_i^2 (\delta
q)_m^2 + \\ &+ 12\sum_{i,\,j<l}a_ia_j^2a_m (\delta q)_i(\delta
q)_j^2(\delta q)_m + 12\sum_{i,\,j<l}a_ia_ja_m^2 (\delta q)_i(\delta
q)_j(\delta q)_m^2 +\\ &+24\sum_{i<j<h<l}a_0a_ia_ha_m (\delta
q)_0(\delta q)_i(\delta q)_h(\delta q)_m\ ,
\end{align*}
giving a contribute $\mathcal{O}\tond{e^{-\sigma_0 m}}$.
\end{description}

This concludes the proof.


\paragraph{Acknowledgement.}
We warmly thank Dario Bambusi for many useful discussions and
comments. We are indebted with both the referees for their
constructive criticism, which helped us to clarify many aspect of the
manuscript during the revision. T.P. would like also to thank
V.Koukouloyannis, P.G.Kevrekidis and D.Pelinovsky for many interesting
and helpful discussions about the present paper, during the AIMS14
Conference in Madrid. This research is partially supported by
MIUR-PRIN program under project 2010 JJ4KPA (``Te\-orie geome\-triche
e anali\-tiche dei sistemi Hamilto\-niani in dimensioni finite e
infi\-nite'').



\def\cprime{$'$} \def\i{\ii}\def\cprime{$'$} \def\cprime{$'$}

\end{document}